\newcommand{\Mod}[1]{\ (\mathrm{mod}\ #1)}
\newtheorem{theorem}{Theorem}[section]
\newtheorem{proposition}[theorem]{Proposition}
\newtheorem{lemma}[theorem]{Lemma}
\newtheorem{corollary}[theorem]{Corollary}
\begin{document}

\title[On the nonorientable 4-genus of double twist knots]{On the nonorientable 4-genus\\ of double twist knots}
\author{Jim Hoste}
\email{jim\_hoste@pitzer.edu}  
\author{Patrick D.~Shanahan}
\email{patrick.shanahan@lmu.edu}
\author{Cornelia A.~Van Cott}
\email{cvancott@usfca.edu}

\maketitle

\begin{abstract}
We investigate the nonorientable 4-genus $\gamma_4$ of a special family of 2-bridge knots, the double twist knots $C(m,n)$. Because the nonorientable 4-genus is bounded by the nonorientable 3-genus, it is known that $\gamma_4(C(m,n)) \le 3$. By using explicit constructions to obtain upper bounds on $\gamma_4$ and known obstructions derived from Donaldson's Diagonalization Theorem to obtain lower bounds on $\gamma_4$, we produce infinite subfamilies of $C(m,n)$ where $\gamma_4=0,1,2,$ and $3$, respectively. However, there remain infinitely many double twist knots where our work only shows that $\gamma_4$ lies in one of the sets $\{1,2\}, \{2,3\}$, or $\{1,2,3\}$. We tabulate our results for all $C(m,n)$ with $| m|$ and $|n|$ up to 50. We also provide an infinite number of examples which answer a conjecture of Murakami and Yasuhara. 
\end{abstract}



\section{Introduction}
The {\it crosscap} number, or {\it nonorientable genus},  of a knot $K$ in $S^3$, denoted here as $\gamma_3(K)$,  was introduced by Clark in \cite{Clark_1978}. It is defined as the smallest  first Betti number of any embedded, compact, connected, nonorientable surface in $S^3$ that spans $K$. For convenience, the crosscap number of the unknot is defined to be zero. Similarly, the {\it nonorientable 4-genus} of a knot $K$ in $S^3$, introduced in \cite{Murakami_Yasuhara} and denoted here as $\gamma_4(K)$, is
defined as the smallest  first Betti number of any smooth, properly embedded, compact, connected,  nonorientable surface in $B^4$ that spans $K$. Moreover, $\gamma_4(K)$ is defined to be zero if $K$ is a slice knot. Clearly, $\gamma_4(K)\le \gamma_3(K)$ for any knot $K$. Notice that we are requiring the surfaces in $B^4$ to be smooth (unless otherwise stated). Work of Gilmer and Livingston~\cite{GL} showed that the smooth and topological categories produce different results for $\gamma_4$. In particular, they showed there exist knots that bound topological but not smooth M\"obius bands in $B^4$.

In 1975, years before the invariant $\gamma_4$ was formally defined, Viro~\cite{Viro} proved that the figure eight knot, $4_1$,  cannot bound a smooth M\"obius band in the 4-ball. Hence $\gamma_4(4_1) \geq 2$. As the checkerboard shading of the standard diagram for the figure eight gives a nonorientable surface with first Betti number 2, it follows that $\gamma_4(4_1)=\gamma_3(4_1) = 2$. Subsequent work has filled in the nonorientable 4-genus of all prime knots up through 10 crossings~\cite{Ghanbarian2020, JK}. (Note  that \cite{Ghanbarian2020, JK}, as well as {\sl KnotInfo} \cite{knotinfo},   define $\gamma_4$ of a slice knot to be one rather than the original definition of zero. Furthermore, the invariant is called ``4D Crosscap Number'' in KnotInfo.) The invariant has been studied extensively in the special case of torus knots~\cite{allen2020, Batson, binns2021, feller2021, jabuka2020,  jabuka2019, Lobb, Longo2020}. 

In addition to torus knots, another natural family of knots to consider are the 2-bridge knots $K_{p/q}$. The figure eight knot is a two bridge knot, as are all $(2,2k+1)$-torus knots which are easily seen to satisfy $\gamma_4(T(2,2k+1)) = \gamma_3(T(2,2k+1))= 1$ (assuming $|2k+1|>1$). In general, however, the nonorientable four genus of 2-bridge knots is not known. 

A nice subset of 2-bridge knots are the double twist knots, which will be the focus of this paper. Recall that a 2-bridge knot is one with a diagram of the form shown in Figure~\ref{4-plat diagram}, which we denote as $C(a_1, a_2, \dots, a_n)$, as is done in \cite{Cromwell2004}.   
\begin{figure} 
   \centering
   \includegraphics[width=4in, angle=0]{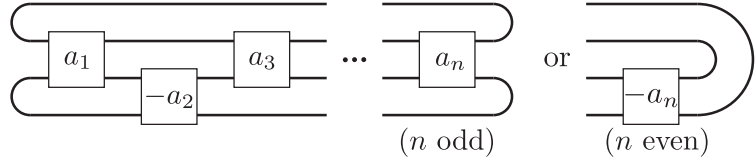}
   \caption{The diagram $C(a_1, a_2, \dots, a_n)$ of a 2-bridge knot.} 
\label{4-plat diagram}
\end{figure}
In the diagram, a box labeled $k$ denotes $k$ right-handed half twists between the two strands contained in the box. Note that $-k$  right-handed half twists are regarded as $k$ left-handed half twists. In particular, if $a_i>0$ for all $i$,  or $a_i<0$ for all $i$, then the diagram in Figure~\ref{4-plat diagram} is alternating and minimal in crossing number.  
The sequence $a_1, a_2, \dots, a_k$ gives rise to the reduced fraction $\frac{p}{q}$, via the (additive) continued fraction, 
$$\frac{p}{q}=[a_1, a_2, \dots, a_k]=a_1+\cfrac{1}{a_2+\cfrac{1}{\ddots +\cfrac{1}{a_k}}}.$$ 
This associates to $C(a_1, a_2, \dots, a_k)$ a rational number $p/q$, with $0<|q|<p$. In the case of a knot, $p$ is odd, while in the case of a 2-component link, $q$ will be odd and $p$ even. Clearly, we can work backwards from any such fraction to obtain a 2-bridge link, which we denote $K_{p/q}$. Moreover, two such fractions $p/q$ and $p'/q'$ represent ambient isotopic 2-bridge knots if and only if $p'=p$ and either  $q' \equiv q \text{ (mod $p$)}$ or $q'q\equiv 1 \text{ (mod $p$)}$. Because we will also consider a 2-bridge knot $K_{p/q}$ and its mirror image, $K_{-p/q}$,  as equivalent, the equivalence on fractions becomes $p'=p$ and either  $q' \equiv \pm q \text{ (mod $p$)}$ or $q'q\equiv \pm1 \text{ (mod $p$)}$.   Note that $\gamma_d(K_{p/q})=\gamma_d(K_{-p/q})$ for $d$ equal to either 3 or 4. 

 A {\it double twist knot} is a knot in $S^3$ with diagram $C(m,n)$,  where $m$ and $n$ are any integers, at least one of which is even. (If both are odd, $C(m,n)$ is a link of two components.) These include the {\it twist knots} $C(m,2)$ as well as the $(2, 2k+1)$-torus knots, $C(1,2k)$, which have $\gamma_4=1$ as mentioned above. In \cite{feller2021}, Feller and Golla determine $\gamma_4$ for an infinite family of twist knots.

The double twist knot $C(m,n)$ is a 2-bridge knot with classifying fraction
$$p/q=m+\frac{1}{n}=\frac{mn+1}{n}.$$ 
Thus, $C(m,n)$ is the unknot if and only if $mn+1=\pm 1$, or equivalently, if $mn=0$ or $mn=-2$. (If we assume that $m>1$, and $n$ is even and nonzero, then $C(m,n)$ is never the unknot.)  The classifying fraction also tells us that the 
determinant of $C(m,n)$ is $|mn+1|$. 
If $m$ and $n$ are both positive, then the diagram is alternating and the crossing number of $C(m,n)$ is $m+n$. If $m>0$ and $n<0$, then the diagram $C(m,n)$ is no longer alternating. However, we may change the diagram to $C(m-1,1,-n-1)$ which is alternating and represents the same knot because it has the same classifying fraction. Thus, when $m>0$ and $n<0$, the crossing number of $C(m,n)$ is $m-n-1$.

Throughout this paper, we consider two knots to be equivalent if there is a homeomorphism of $S^3$ taking one knot to the other.  Therefore  trading $m$ and $n$ or negating both $m$ and $n$ does not change the knot type.  By trading $m$ and $n$ or negating both, if necessary, we may assume that $m>0$ and $n$ is even. Because we know $\gamma_4=1$ when $m=1$ (with the exception $\gamma_4(C(1,-2))=0$),  throughout most of this paper  {\bf we will assume that  $\boldsymbol {m>1}$  and $\boldsymbol n$ is even and nonzero}.  Even with these restrictions, some knot types are still repeated. By ``flipping over the clasp'' in a twist knot, we see that $C(m,-2)=C(m-1,2)$. (In fact, this move is a flype.) Finally, if both $m$ and $n$ are positive and even, then both $C(m,n)$ and $C(n,m)$ meet the restriction of having the first parameter positive and the second one even. 

Using the work of Lisca \cite{Lisca} we first derive the following theorem which describes the double twist knots which are slice and hence have $\gamma_4=0$.

\newtheorem*{thm0}{Theorem \ref{double twist knots with gamma4 equal to 0}}
\begin{thm0}If  $m>1$ and $n$ is even and nonzero, then $\gamma_4(C(m,n))=0$  if and only if $|m-n|=2$ or $(m,n)=(5,-2)$. 
\end{thm0}

Our main results are the following theorems.

\newtheorem*{thm1}{Theorem \ref{double twist knots with gamma4 equal to 1}}
\begin{thm1}
If $m>0$, $n$ is even and nonzero,  and $(m,n)$ is one of the following, then $\gamma_4(C(m,n))=1$.  

\begin{enumerate}
\item $(1, n)$, except the unknot $(1,-2)$, 
\item  $(2,-6)$, $(2,-10)$, $(3,8)$, $(5,-6)$, $(6,-6)$, 
\item $(4,n)$, except $(4,2)$ and $(4,6)$,
\item $(m,\pm 4)$, except $(6,4)$, or
\item $(m,n)$ where $|m-n|\in \{1, 3, 6, 7\}$, except $(5,-2)$.
\end{enumerate}
\end{thm1}

\newtheorem*{thm2}{Theorem \ref{double twist knots with gamma4 equal to 2}}
\begin{thm2}
If $m>1$, $n$ is even and nonzero,  and $(m,n)$ is one of the following, then $\gamma_4(C(m,n))=2$.
\begin{enumerate}
\item $(2,n)$, with $n>0$ and $n\equiv 2\Mod{4}$,  

\item $(m,2)$, with $m\equiv 2 \Mod{4}$, 

\item $(m,n)$, with $m \equiv 3 \Mod{4}$, $n<0$, and $n\equiv 2 \Mod{4}$,    

\item  $(m,m)$, with $m \equiv 0 \Mod{4}$ and $m$ not a square, 

\item $m$ is odd, $n\equiv 0 \Mod{4}$, $n<0$,  and $n\ne -4$,  

\item $(8,n)$, with $n<0$ and $n\ne -4$,  

\item $(m,-8)$, with $m$ even and $m\ne 4$, 

\item $m \equiv 1\pmod{4}$, $m$ not a square, $n\equiv 2 \pmod{4}$, and $n>m+2$, 

\item $m \equiv 3 \pmod{4}$,  $n>0$, $n\equiv 2 \pmod{4}$, and $m>n+2$, 

\item $(m, m+10)$, with $m>2$  even, $m$ not a square, and  $m+10$ not a square, 

\item $(13+4k,8+4k)$ where $k\ge 0$, $13+4k$ not a square, and $8+4k$ not a square, 

\item $(7+4k,12+4k)$ where $k\ge 0$ and $12+4k$ not a square,

\item\label{twistex} $(2, -(18+100 k))$ where $k\ge 0$, 

\item $(13+36k,-10)$ where $k\ge0$. 

\end{enumerate}  
\end{thm2}

\newtheorem*{thm3}{Theorem \ref{double twist knots with gamma4 equal to 3}}
\begin{thm3}
If $(m,n)=(22+8k,62+8k)$ with $k\ge 0$, then $\gamma_4(C(m,n))=3$.
\end{thm3}

In \cite{Murakami_Yasuhara}, the authors prove that $\gamma_4(K)\le 2g_4(K)+1$ for all knots, where $g_4$ is the orientable 4-ball genus. They then conjecture that there is a knot where equality is achieved. The first known example is given in \cite{JK}. The double  twist knots in Theorem~\ref{double twist knots with gamma4 equal to 3}  provide infinitely many such examples (see Corollary~\ref{MY conjecture}). 

Restricting our attention to twist knots we also obtain the following which generalizes the results of Feller and Golla \cite{feller2021}.

\newtheorem*{thmtwistknots}{Theorem \ref{values of t and j that work}}
\begin{thmtwistknots}
For the following twist knots $K$, we have  $\gamma_4(K)=2$.  
\begin{enumerate}
\item $K=C(4 j+4 t k, 2)$, where $k\ge 0$   and $(t,j)$ are specified as in Table~\ref{tj-values for m congruent to 0 mod 4}.
\item $K=C(1+4 j+4 t k, 2)$, where $k\ge 0$   and $(t,j)$ are specified as in Table~\ref{tj-values for m congruent to 1 mod 4}.
\end{enumerate}
\end{thmtwistknots}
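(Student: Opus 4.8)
The plan is to prove $\gamma_4(C(m,2))=2$ by matching an elementary upper bound against a Donaldson-theoretic lower bound, with essentially all of the subtlety concentrated in the latter. For the upper bound I would observe that every twist knot $C(m,2)$ bounds a nonorientable surface of first Betti number $2$ in $S^3$ --- for instance a checkerboard surface of its standard diagram (a once-punctured Klein bottle) --- so pushing the interior into $B^4$ gives $\gamma_4(C(m,2))\le\gamma_3(C(m,2))\le 2$ uniformly in $m$, with no case analysis. The entire content of the theorem is therefore the reverse inequality $\gamma_4(C(m,2))\ge 2$ for the listed families. Ruling out $\gamma_4=0$ is immediate: by Theorem~\ref{double twist knots with gamma4 equal to 0} a knot $C(m,2)$ with $m>1$ is slice only when $|m-2|=2$, i.e.\ $m=4$, and I would check that neither progression $m=4j+4tk$ nor $m=1+4j+4tk$ in the tables ever produces $m=4$.

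The heart of the argument is ruling out $\gamma_4=1$, i.e.\ showing these knots bound no smooth M\"obius band in $B^4$, and here I would use the obstruction from Donaldson's Diagonalization Theorem. If $C(m,2)$ bounds a M\"obius band $B$, then the double branched cover $W=\Sigma_2(B^4,B)$ has $b_2(W)=1$, boundary the lens space $\Sigma_2(C(m,2))=L(2m+1,2)$, and rank-one intersection form $\langle -(2m+1)\rangle$. The lens space $L(2m+1,2)$ bounds the canonical negative-definite linear plumbing $X$ coming from the Hirzebruch--Jung expansion $(2m+1)/2=[\,m+1,2\,]$, a two-vertex plumbing with form $Q_X=\left(\begin{smallmatrix}-(m+1)&1\\1&-2\end{smallmatrix}\right)$ of determinant $2m+1$. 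Capping $W$ with a suitably oriented copy of $X$ yields a closed negative-definite $4$-manifold $Z$ with $b_2(Z)=3$, so Donaldson's theorem forces its form to be diagonal, $\langle -1\rangle^3$, whence the finite-index sublattice $Q_X\oplus\langle -(2m+1)\rangle$ must embed in the standard negative lattice $\mathbb{Z}^3$.

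Carrying out this embedding problem explicitly is the crux. Writing the three generators in an orthonormal basis and solving the resulting Gram-matrix equations, I expect the embedding to exist if and only if $2m+1$ is represented by the binary quadratic form $u^2+2v^2$; equivalently (since $x^2+2y^2$ has class number one) if and only if every prime $\ell\equiv 5,7\pmod 8$ divides $2m+1$ to an even power. Thus the obstruction succeeds, giving $\gamma_4\ge 2$, precisely when $2m+1$ has some prime factor $\ell\equiv 5,7\pmod 8$ to an odd power. Note that an odd $u$ forces $u^2+2v^2\equiv 1,3\pmod 8$, so for $m\equiv 2,3\pmod 4$ one has $2m+1\equiv 5,7\pmod 8$ and the obstruction is automatic (recovering earlier parts of Theorem~\ref{double twist knots with gamma4 equal to 2}); only the residues $m\equiv 0,1\pmod 4$ treated by the two tables are genuinely in question. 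The tables are then built to force non-representability along an entire progression: for a fixed prime $\ell\equiv 5,7\pmod 8$ one takes the period $t$ to be a multiple of $\ell^2$ and the offset $j$ so that $2m+1\equiv\ell\pmod{\ell^2}$ for all $k\ge 0$, making $\ell$ exactly divide $2m+1$ throughout the family.

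The main obstacle is the exact solution of the lattice-embedding problem: I must show not merely that an embedding exists when $2m+1=u^2+2v^2$, but that \emph{no} embedding exists otherwise, ruling out every signed-permutation placement of the three generators and every distribution of the self-intersections $m+1$, $2$, and $2m+1$ across the coordinates. A secondary, bookkeeping obstacle is verifying the table design: for each listed $(t,j)$ I would confirm that the forced prime $\ell$ divides $2m+1$ to an odd power for all $k$ (in particular that $2m+1\not\equiv 0\pmod{\ell^2}$), and that the orientation conventions for $W$ and $X$ make $Z$ genuinely definite --- passing, if necessary, to the mirror image and using $\gamma_4(K)=\gamma_4(\overline{K})$ to fix the signs.
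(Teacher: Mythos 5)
Your Donaldson argument contains a genuine gap at the step where you assert that $W=\Sigma_2(B^4,B)$ has intersection form $\langle -(2m+1)\rangle$. What Gilmer--Livingston actually give (and what the paper uses) is that the form is $[\pm d]$, where $d$ divides $\det K=2m+1$ and $(2m+1)/d$ is a square: neither the sign nor the value $d$ is at your disposal. The value issue is repaired by the paper's Lemma~\ref{need only check for ell equal det K}, which you could import. The sign issue is more serious, and your proposed fix by mirroring does not work: replacing $K$ by its mirror negates the form on $W$ but also reverses the orientation of the lens space, so the negative-definite filling of $-L(2m+1,2)$ is no longer your rank-two plumbing but the long chain $[2^{[m-1]},3]$ of rank $m$ (the paper's $G_-$ from Proposition~\ref{Goeritz matrices}), and the resulting embedding problem is the paper's condition (b), of rank $m+1$ --- not a sign-flip of your rank-3 problem. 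The sign of the form on $W$ is controlled only through $\sigma+4\,\mathrm{Arf}\pmod 8$, which is exactly the content of Theorem~\ref{JK theorems} and Proposition~\ref{gamma4>1 conditions}. By Proposition~\ref{sigma}, for $m\equiv 1\pmod 4$ this invariant is $-2$ and only your sign occurs, so case (2) can be salvaged once you cite that computation. But for $m\equiv 0\pmod 4$ --- all of case (1), the whole of Table~\ref{tj-values for m congruent to 0 mod 4} --- the invariant is $0$, both signs are possible, and for a band with form $[+d]$ your capped manifold $Z$ is indefinite, so Donaldson says nothing. For those knots one must additionally obstruct the rank-$(m+1)$ embedding $G_-\oplus[-(2m+1)]\hookrightarrow -\mathrm{Id}_{m+1}$; this is easy (the equation $2=mx^2+2x+y^2$ forces $x=0$ and $2=y^2$), but it is a separate problem your proposal never formulates. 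Your parenthetical claim that your rank-3 obstruction ``recovers'' the case $m\equiv 3\pmod 4$ shows the same confusion: there the invariant is $+2$, the band's form is $[+d]$, and the operative obstruction is the long one, not yours.

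Aside from this, your argument is essentially the paper's (Jabuka--Kelly) machinery in different clothing, and the arithmetic is handled attractively. Your plumbing $X$ is exactly $-G_+$, and your representability criterion is equivalent to the paper's condition (a), since $m=2x^2+2x+y^2$ if and only if $2m+1=(2x+1)^2+2y^2$ (any representation $2m+1=u^2+2v^2$ has $u$ odd). Likewise your reading of the tables --- take $t=\ell^2$ for a prime $\ell\equiv 5,7\pmod 8$ and choose $j$ so that $\ell$ divides $2m+1$ exactly once, using class number one for $u^2+2v^2$ --- is a correct and conceptually pleasant repackaging of the paper's congruence conditions $2j\not\equiv x^2+x+2y^2\pmod t$ and $2j\not\equiv x^2+x+2y^2+2y\pmod t$ from Proposition~\ref{families of twist knots with gamma4 equal to 2}. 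With three repairs --- (i) import the $\sigma+4\,\mathrm{Arf}$ table to justify which sign(s) occur, (ii) invoke Lemma~\ref{need only check for ell equal det K} to reduce to $d=2m+1$, and (iii) add the condition (b) check for the $m\equiv 0\pmod 4$ family --- your outline closes, and then it coincides with the paper's proof; the remaining obstacle you flagged (rigidity of the lattice embedding) is exactly the paper's Lemma~\ref{lemma1} and Proposition~\ref{m,n>0 phi embedding conditions}.
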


While these results determine $\gamma_4$ for infinitely many twist and double twist knots and significantly extend the known values of $\gamma_4$ for 2-bridge knots, 
 there still remain infinitely many $C(m,n)$ for which we can only determine that  $\gamma_4$ lies in one of the sets $\{1,2\}, \{2,3\}$, or $\{1,2,3\}$. In Tables~\ref{table1} and \ref{table2}, we record our results for all  $C(m,n)$ with $1\le m \le 50$ and $-50 \le n \le 50$.

The paper proceeds as follows. In Section~\ref{general methods section}, we describe the methods used determine upper and lower bounds for $\gamma_4(K)$. These methods are due to many authors and apply to any knot. In Section~\ref{Properties of Double Twist Knots}, we compute various invariants of double twist knots that will be needed to apply the methods described in Section~\ref{general methods section}. In Section~\ref{gamma_4 is 0 or 1}, we determine precisely which double twist knots have $\gamma_4=0$ and give a number of infinite families of double twist knots for which $\gamma_4=1$. In Sections~\ref{gamma_4 is 2} and \ref{gamma_4 is 3}, we determine a number of infinite families of double twist knots for which $\gamma_4$ is 2 or 3, respectively. Finally, in Section~\ref{twist knots} we focus on twist knots, again providing infinite families where we can determine $\gamma_4$. This extends the work of Feller and Golla~\cite{feller2021}.
   
The results in this paper do not finish the story for the nonorientable four genus of double twist knots. We highlight the edges of our knowledge and opportunities for future investigation via the following examples. (For more, see Tables~\ref{table1} and~\ref{table2}.)
\begin{itemize}
\item The knot $C(12, 2)$ is the lowest crossing number example  where we are not able to compute $\gamma_4$. The value is either 1 or 2.
\item The knot $C(12, -6)$ is the lowest crossing number example where we know that $\gamma_4$ is either 2 or 3, but we cannot determine the value. 
\item The knot $C(10, -6)$ is the lowest crossing number example  where we only know that $\gamma_4$ is 1, 2, or 3, but we cannot determine the value. 
\item We did not find any double twist knots $C(m,n)$ with $m>0$, $n<0$ and $\gamma_4=3$. Are there any?
\end{itemize}


\subsection*{Acknowledgements}
We thank the referee for useful comments leading to the reorganization and shortening of the paper.

\section{General methods for determining $\gamma_4$}\label{general methods section}

We use well-known methods for determining upper and lower bounds for $\gamma_4$ and describe these in this section. For a  knot $K$, an upper bound is found by exhibiting a specific nonorientable surface in $B^4$ with known first Betti number that spans $K$. One way to do this is to find such a surface in $S^3$ which can then be pushed into $B^4$. Thus the crosscap number $\gamma_3$ provides an upper bound for $\gamma_4$. Alternatively, we can also construct such a surface by finding a {\it band move} from $K$ to a slice knot. A lower bound for $\gamma_4$ is found using algebraic obstructions that ultimately depend on Donaldson's Diagonalization Theorem. 

To produce a nonorientable surface in $B^4$ spanning a knot $K$, we make use of the following construction. Given an oriented knot or link $K$ we may change $K$ to $K'$ by {\it attaching a band} $b=I \times I$ to $K$ as follows. First embed $b$ in $S^3$ so that $b \cap K=I\times \partial I$. Next, remove $I\times \partial I$ from $K$ and replace it with $\partial I \times I$ to create $K'$. Call this an {\it orientable} band move if some orientation of $b$ produces an orientation on $\partial b$ that agrees with the orientation on $K$ and  a {\it nonorientable} band move otherwise. If $K$ is a knot, an orientable band move will produce a link $K'$ of two components, while a nonorientable band move will produce a knot $K'$.  Note that if we can change $K$ to $K'$ by attaching a band, then we can likewise change  $K'$ to  $K$ by attaching a band, where the band is the same, but with its two interval factors reversed.

Suppose a band move on a knot $K$ produces the unlink of two components, the boundary of two disjointly embedded disks. Placing the band in general position in $S^3$ with respect to these two disks, we see that $K$ bounds an immersed disk with only ribbon singularities. Hence $K$ is slice. Similarly, if a nonorientable band move on a knot $K$ produces a slice knot, then $K$ bounds a M\"obius band in $B^4$. If two band moves take a knot $K$ to a slice knot, then $K$ bounds a nonorientable surface in $B^4$ with first Betti number 2.

Later, when we focus our attention on double-twist knots, we will consider band moves of a special kind that always produce 2-bridge knots. This will allow us to take advantage of Lisca's determination of exactly which 2-bridge knots are slice.

Several authors have formulated algebraic obstructions to a knot $K$ bounding a nonorientable surface in $B^4$ with small first Betti number. One of the most simply stated is the  following result which follows from Yasuhara \cite{Yasuhara}.

\begin{theorem}\label{Yasuhara lemma}
If $\sigma(K) +4\mathrm{Arf}(K)\equiv 4\Mod{8}$, then $\gamma_4(K) \geq 2$.
\end{theorem}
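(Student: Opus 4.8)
The plan is to prove the contrapositive: assuming $\gamma_4(K)\le 1$, I would show $\sigma(K)+4\mathrm{Arf}(K)\not\equiv 4\Mod 8$. Since $\gamma_4(K)\le 1$ means that $K$ is either slice or bounds a smoothly, properly embedded M\"obius band in $B^4$ (a connected nonorientable surface with one boundary component and first Betti number $1$ is a M\"obius band), I would split the argument into these two cases. It is also worth recording at the outset that $\sigma(K)$ is even and $\mathrm{Arf}(K)\in\{0,1\}$, so $\sigma(K)+4\mathrm{Arf}(K)$ is always even; the assertion is precisely that it cannot realize the middle even residue $4$.

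The slice case is immediate. If $\gamma_4(K)=0$ then $K$ is slice, so $\sigma(K)=0$ and $\mathrm{Arf}(K)=0$, whence $\sigma(K)+4\mathrm{Arf}(K)\equiv 0\Mod 8$, and in particular $\not\equiv 4$.

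The heart of the argument is the M\"obius band case, where I would invoke the congruence of Yasuhara~\cite{Yasuhara}. Suppose $K=\partial M$ for a smoothly, properly embedded M\"obius band $M\subset B^4$ with normal Euler number $e(M)$. The relevant input from \cite{Yasuhara} is a Guillou--Marin type congruence obtained by capping off and closing up the configuration into a closed characteristic surface in a closed $4$-manifold and then comparing the signature of that manifold with the self-intersection and the Brown invariant of the surface; this yields a relation of the shape $\tfrac{1}{2}e(M)\equiv \sigma(K)+4\mathrm{Arf}(K)\Mod 8$, in which the Brown invariant of the $\mathbb{Z}/4$-valued quadratic form carried by $M$ supplies the $4\mathrm{Arf}(K)$ term. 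Combining this with the restriction on the admissible values of $\tfrac{1}{2}e(M)$ for a M\"obius band whose boundary is a knot shows that $\sigma(K)+4\mathrm{Arf}(K)$ is forced into the residues $\{0,2,6\}\Mod 8$, hence never equals $4$. Together with the slice case this establishes the contrapositive and proves $\gamma_4(K)\ge 2$.

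The main obstacle is the bookkeeping in the M\"obius band case: extracting Yasuhara's congruence in exactly the normalization and sign convention needed, correctly identifying the contribution of the core curve of $M$ to the Brown invariant and matching it against $\mathrm{Arf}(K)$, and verifying the constraint on the normal Euler number of a M\"obius band with knotted boundary that excludes the residue $4$. Once these ingredients from \cite{Yasuhara} are assembled, the deduction is short; the subtlety lies entirely in quoting and applying the congruence with the correct parity and mod~$8$ constraints, not in any new geometric construction.
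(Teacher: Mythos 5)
Your proposal is correct and is essentially the paper's proof: both arguments reduce to Yasuhara's theorem that $\gamma_4(K)\le 1$ forces $4\mathrm{Arf}(K)-\sigma(K)\equiv 0,\pm 2\pmod 8$, and the residue set $\{0,2,6\}$ you extract is exactly this set, which excludes $4$. The sign/normalization bookkeeping you flag as the main obstacle is in fact immaterial: the paper dispatches it by replacing $K$ with its mirror image $K^*$ (using $\sigma(K^*)=-\sigma(K)$, $\mathrm{Arf}(K^*)=\mathrm{Arf}(K)$, and $\gamma_4(K^*)=\gamma_4(K)$), or equivalently by observing that the set $\{0,\pm 2\}$ is preserved under negation mod $8$, so whichever sign convention one extracts from \cite{Yasuhara} yields the stated conclusion.
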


\begin{proof} In \cite{Yasuhara}, Yasuhara proves that if $\gamma_4(K) \le 1$, then 
$4\mathrm{Arf}(K)-\sigma(K)$ is $0$ or $\pm 2$ modulo $8$. If $K^*$ is the mirror image of $K$, then $\mathrm{Arf}(K^*)=\mathrm{Arf}(K)$ and $\sigma(K^*)=-\sigma(K)$. Hence, if $4\mathrm{Arf}(K)+\sigma(K) \equiv 4 \Mod{8}$, then $4\mathrm{Arf}(K^*)-\sigma(K^*) \equiv 4 \Mod{8}$. Therefore, $\gamma_4(K)=\gamma_4(K^*) \ge 2$ by Yasuhara's result.
\end{proof}

To determine $\gamma_4$ for all prime knots to nine crossings Jabuka and Kelly develop an obstruction that we will make heavy use of. We give a brief sketch here of how their obstruction is obtained.
Full details can be found  in \cite{JK}. 

Let $F$ be obtained from a checkerboard spanning surface $F'$ for some diagram $D$ of $K$ that has been pushed into $B^4$ and let $\Sigma$ be a hypothetical M\"obius band in $B^4$ that spans $K$. Let $W(F)$ and $W(\Sigma)$ each be 2-fold covers of $B^4$ branched along the surfaces $F$ and $\Sigma$, respectively. Both of these 4-manifolds have the same boundary $Y$,  which is the 2-fold cyclic cover of $S^3$ branched over $K$. Thus we may glue them together along their common boundary to obtain the closed 4-manifold
$$X=W(F)\cup _Y -W(\Sigma).$$
Under the right conditions, the intersection form on $X$ will be definite and hence diagonalizable by the work of Donaldson \cite{Donaldson2}. Gordon and Litherland \cite{Gordon-Litherland} show that the intersection form on $W(F)$ is given by the Goeritz matrix $G_\epsilon$ associated to $F'$. Work of Gilmer~\cite{G} and Gilmer--Livingston~\cite{GL} imply that the intersection form on $W(\Sigma)$ is given by the $1 \times 1$ matrix $[\pm\ell]$, where $\ell \in \mathbb N$, $\ell$ divides $\det K$, and $\det K/\ell $ is a square integer. This implies that the form $G_\epsilon \oplus [\epsilon \ell]$ will embed in the diagonal form $\epsilon \text{Id}$. Therefore, if no such embedding exists, then $K$ cannot bound a M\"obius band in $B^4$. The following lemma allows us to simplify the question of whether such an embedding exists to the case of $\ell=\det(K)$.

\begin{lemma}\label{need only check for ell equal det K}
Suppose $G$ is an $r \times r$, $\epsilon$-definite matrix, $\ell$ is a natural number, and $j$ is an integer. If there exists an embedding
$$\phi: (\mathbb Z^{r+1}, G \oplus[\epsilon \ell]) \to (\mathbb Z^{r+1}, \epsilon \mathrm{Id}),$$ 
then there exists an embedding
$$\phi': (\mathbb Z^{r+1}, G \oplus[\epsilon \ell j^2]) \to (\mathbb Z^{r+1}, \epsilon \mathrm{Id}).$$  
\end{lemma}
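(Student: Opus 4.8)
The plan is to construct $\phi'$ directly out of $\phi$, keeping the embedding of the $G$-summand untouched and rescaling only the image of the last basis vector. Write $e_1, \dots, e_{r+1}$ for the standard basis of the domain $\mathbb Z^{r+1}$, and set $v_i = \phi(e_i)$. Since $\phi$ is an isometry from $(\mathbb Z^{r+1}, G\oplus[\epsilon\ell])$ to $(\mathbb Z^{r+1}, \epsilon\,\mathrm{Id})$, writing $x\cdot y$ for the standard dot product on the target, the vectors $v_i$ satisfy $\epsilon(v_i\cdot v_k) = G_{ik}$ for $1\le i,k\le r$, together with $v_{r+1}\cdot v_{r+1} = \ell$ and $v_i\cdot v_{r+1} = 0$ for $1\le i\le r$. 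I would then define $\phi'\colon \mathbb Z^{r+1}\to\mathbb Z^{r+1}$ on basis vectors by $\phi'(e_i) = v_i$ for $1 \le i \le r$ and $\phi'(e_{r+1}) = j\,v_{r+1}$, extending $\mathbb Z$-linearly; note $j\,v_{r+1}\in\mathbb Z^{r+1}$ since $j\in\mathbb Z$, so $\phi'$ lands in the correct lattice.

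Next I would verify that $\phi'$ preserves the form $G\oplus[\epsilon\ell j^2]$. The upper-left $r\times r$ block and the off-diagonal entries are unchanged, because $\phi'$ agrees with $\phi$ on $e_1,\dots,e_r$ and $v_i\cdot(j v_{r+1}) = j(v_i\cdot v_{r+1}) = 0$. The only entry that changes is the last diagonal one, and there bilinearity supplies exactly the desired factor of $j^2$: one has $\epsilon\bigl((j v_{r+1})\cdot(j v_{r+1})\bigr) = \epsilon j^2(v_{r+1}\cdot v_{r+1}) = \epsilon\ell j^2$, which matches the $(r+1,r+1)$-entry of $G\oplus[\epsilon\ell j^2]$. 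Thus $\phi'$ is an isometry onto its image.

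The one point requiring care — and the only real obstacle, though a minor one — is injectivity. Since $G$ is $\epsilon$-definite it is nondegenerate, and $\ell\ge 1$, so the source form $G\oplus[\epsilon\ell]$ is nondegenerate; hence $\phi$ is injective and the matrix $M$ whose columns are $v_1,\dots,v_{r+1}$ satisfies $\det M\ne 0$. The matrix of $\phi'$ is $M\,\mathrm{diag}(1,\dots,1,j)$, with determinant $j\det M$, so $\phi'$ is injective precisely when $j\ne 0$. This is exactly the situation relevant to the application, where $\ell$ divides $\det K$ and $\det K/\ell = j^2$ forces $j=\sqrt{\det K/\ell}\ge 1$; one may therefore state the lemma for $j\ne 0$ (or simply observe that $j=0$ makes $G\oplus[\epsilon\ell j^2]$ degenerate and the statement vacuous). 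In short, the entire content of the lemma is the scaling trick $v_{r+1}\mapsto j\,v_{r+1}$, which multiplies the relevant self-pairing by $j^2$ while leaving every other pairing fixed.
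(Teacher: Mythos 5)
Your proof is correct and is essentially the same as the paper's: the paper also defines $\phi'$ by sending the first $r$ basis vectors to their $\phi$-images and the last one to $j\,\phi(e_{r+1})$, checks that the pairing is preserved, and deduces injectivity from the fact that the source form is $\epsilon$-definite. Your extra care about $j=0$ is a legitimate refinement the paper glosses over (when $j=0$ the form $G\oplus[\epsilon\ell j^2]$ is degenerate, so the paper's definiteness argument silently assumes $j\neq 0$), but it is immaterial in the intended application, where $\det K/\ell=j^2$ with $\det K\neq 0$ forces $j\neq 0$.
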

\begin{proof}
Let $\{e_i\}_{i=1}^{r+1}$ and $\{e_i'\}_{i=1}^{r+1}$  be bases for $\mathbb Z^{r+1}$    such that $e_i \cdot e_j$ is the $(i,j)$-entry of $G \oplus[\epsilon \ell]$  and $e_i' \cdot e_j'$ is the $(i,j)$-entry of $G \oplus[\epsilon \ell j^2]$  Given that $\phi$ exists, define $\phi'$ as
$$\phi'(e_i')=\left \{\begin{array}{ll} 
\phi(e_i),&\text{ if $i<r+1$,}\\
j\phi(e_{r+1}),&\text{ if $i=r+1$.}
\end{array} \right.
$$
We now have that, $e_i'\cdot e_j'=\phi'(e_i')\cdot \phi'(e_j')$ so that $\phi'$ preserves the linking form. Moreover, $G \oplus[\epsilon \ell j^2]$  is $\epsilon$-definite, so $\phi'$ is an embedding.
\end{proof}

Combining the results of Jabuka and Kelly \cite{JK} with Lemma~\ref{need only check for ell equal det K} gives the following theorem.

\begin{theorem}\label{JK theorems}
Suppose that $K$ is a knot with alternating diagram $D$ having positive and negative definite Goeritz matrices, $G_+$ and $G_{-}$, respectively.
\begin{enumerate}

\item If $\sigma(K)+4 \,\mathrm{Arf}(K)\equiv -2 \Mod{8}$ and there is no embedding 
\begin{equation*}\label{phi I plus embedding}
\phi:(\mathbb Z^{\mathrm{rank}(G_+)+1},G_+\oplus[\det(K)])\to (\mathbb Z^{\mathrm{rank}(G_+)+1}, \mathrm{Id}),
\end{equation*}
then $\gamma_4(K)\ge 2$.

\item If $\sigma(K)+4 \,\mathrm{Arf}(K)\equiv 2 \Mod{8}$ and there is no embedding 
\begin{equation*}\label{phi I minus embedding}
\phi:(\mathbb Z^{\mathrm{rank}(G_-)+1},G_-\oplus[-\det(K)])\to (\mathbb Z^{\mathrm{rank}(G_-)+1}, -\mathrm{Id}),
\end{equation*}
then $\gamma_4(K)\ge 2$.

\item If $\sigma(K)+4 \,\mathrm{Arf}(K)\equiv 0 \Mod{8}$ and there is no embedding 
$$\phi:(\mathbb Z^{\mathrm{rank}(G_\epsilon)+1},G_\epsilon\oplus[\epsilon \det(K)])\to (\mathbb Z^{\mathrm{rank}(G_\epsilon)+1}, \epsilon\mathrm{Id})$$ for both $\epsilon=1$ and $\epsilon=-1$, 
then $\gamma_4(K)\ge 2$.

\item If $\sigma(K)+4 \,\mathrm{Arf}(K)\equiv 4 \Mod{8}$ and there is no embedding
\begin{equation*}\label{phi II embedding}
\phi:(\mathbb Z^{\mathrm{rank}(G_\epsilon)},G_\epsilon)\to (\mathbb Z^{\mathrm{rank}(G_\epsilon)+2}, \epsilon\mathrm{Id})
\end{equation*}
for both $\epsilon=1$ and $\epsilon=-1$, then $\gamma_4(K)\ge 3$.
\end{enumerate}
\end{theorem}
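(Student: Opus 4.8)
The plan is to prove all four statements as contrapositives of the Jabuka--Kelly obstruction sketched above, with Lemma~\ref{need only check for ell equal det K} supplying the one genuinely new ingredient. For cases (1)--(3) I would begin by assuming $\gamma_4(K)\le 1$, so that $K$ bounds a M\"obius band $\Sigma$ in $B^4$. Following the construction preceding the theorem, form $X=W(F)\cup_Y -W(\Sigma)$, where $F$ is the pushed-in checkerboard surface realizing the $\epsilon$-definite Goeritz matrix $G_\epsilon$. By Gordon--Litherland the form on $W(F)$ is $G_\epsilon$, and by Gilmer and Gilmer--Livingston the form on $W(\Sigma)$ is a $1\times 1$ block $[\pm\ell]$ with $\ell\mid\det K$ and $\det K/\ell$ a perfect square. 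Granting definiteness, Donaldson's theorem forces $X$ to have diagonal form $\epsilon\,\mathrm{Id}$, yielding an embedding $G_\epsilon\oplus[\epsilon\ell]\hookrightarrow\epsilon\,\mathrm{Id}$ of rank $\mathrm{rank}(G_\epsilon)+1$.

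The first substantive point is to determine which sign $\epsilon$ is forced, and this is precisely what the residue of $\sigma(K)+4\,\mathrm{Arf}(K)$ modulo $8$ controls. I would compute $\sigma(X)=\sigma(W(F))-\sigma(W(\Sigma))=\epsilon\,\mathrm{rank}(G_\epsilon)\mp 1$ and compare it to $b_2(X)=\mathrm{rank}(G_\epsilon)+1$; definiteness requires $\sigma(X)=\pm b_2(X)$, which pins down the sign of the $W(\Sigma)$ contribution and hence the orientation making $X$ definite. Translating the normal Euler number and Brown/Arf data of $\Sigma$ into this signature count shows that residue $-2$ admits only $\epsilon=+1$, residue $2$ only $\epsilon=-1$, and residue $0$ admits both signs, producing cases (1), (2), and (3) respectively. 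This bookkeeping is the part I expect to be fiddly rather than deep, and it is where I would lean most heavily on the statements in \cite{JK}.

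With $\epsilon$ fixed, the reduction to $\ell=\det K$ is the clean new step. Since $\det K/\ell$ is a perfect square, write $\det K=\ell j^2$. An embedding $G_\epsilon\oplus[\epsilon\ell]\hookrightarrow\epsilon\,\mathrm{Id}$ is exactly the hypothesis of Lemma~\ref{need only check for ell equal det K}, whose conclusion then furnishes an embedding $G_\epsilon\oplus[\epsilon\ell j^2]=G_\epsilon\oplus[\epsilon\det K]\hookrightarrow\epsilon\,\mathrm{Id}$. Read contrapositively: if there is \emph{no} embedding $G_\epsilon\oplus[\epsilon\det K]\hookrightarrow\epsilon\,\mathrm{Id}$, then there is none for any admissible $\ell$, contradicting the existence of $\Sigma$. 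Hence $\gamma_4(K)\ge 2$, which is statements (1)--(3); in case (3) one must rule out both signs, since residue $0$ permits either orientation of $X$ to be definite.

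For statement (4) the residue $\equiv 4$ already forces $\gamma_4(K)\ge 2$ by Theorem~\ref{Yasuhara lemma}, so I would instead assume $\gamma_4(K)\le 2$, i.e.\ that $K$ bounds a surface $\Sigma$ with $b_1(\Sigma)=2$. Then $W(\Sigma)$ has $b_2=2$, so $b_2(X)=\mathrm{rank}(G_\epsilon)+2$, and the same definiteness-plus-Donaldson argument produces an embedding $(\mathbb Z^{\mathrm{rank}(G_\epsilon)},G_\epsilon)\hookrightarrow(\mathbb Z^{\mathrm{rank}(G_\epsilon)+2},\epsilon\,\mathrm{Id})$; the absence of such an embedding for both $\epsilon=\pm1$ gives $\gamma_4(K)\ge 3$. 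The main obstacle throughout is not the algebra of the embeddings but verifying that $X$ is genuinely definite in each residue class, that is, controlling the sign of $\sigma(W(\Sigma))$ from the mod-$8$ data, and, in case (4), confirming that the rank-$2$ summand contributed by $W(\Sigma)$ can be absorbed into the orthogonal complement so that the surviving obstruction is the bare embedding of $G_\epsilon$.
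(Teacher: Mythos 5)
Your proposal is correct and takes essentially the same route as the paper: the authors also obtain the theorem by combining the Jabuka--Kelly obstruction (the $X=W(F)\cup_Y -W(\Sigma)$ construction, Gordon--Litherland and Gilmer--Livingston for the two intersection forms, Donaldson for diagonalization, with the mod-$8$ sign analysis deferred to \cite{JK}) with Lemma~\ref{need only check for ell equal det K} to reduce the divisor condition to the single case $\ell=\det K$. The paper gives no more detail than this, so your reconstruction, including leaning on \cite{JK} for the definiteness bookkeeping and treating case (4) via a $b_1=2$ surface, is exactly what is intended.
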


Another important obstruction was introduced by Murakami and Yasuhara in \cite{Murakami_Yasuhara}. Restricting to double twist knots, their result states that if $C(m,n)$ bounds a locally flat M\"obius band in $B^4$, then $m$ or $-m$ is a quadratic residue modulo $k$ where $k$ is an integer such that $|mn+1|=kj^2$ and $\gcd(k,j)=1$. Thus, if $m$ and $-m$ are not quadratic residues for all such $k$, then $\gamma_4 \ge 2$. However, in the case of double twist knots, we did not make use of this obstruction for two reasons. The first is the difficulty in finding infinite families of double twist  knots that meet the quadratic residue hypothesis. Secondly, for double twist knots $C(m,n)$ with  $1\le m\le300$, $-300\le n \le 300$, and $n$ even, our calculations produced no cases of Murakami--Yasuhara producing a better lower bound than the already mentioned obstructions of Yasuhara and Jabuka-Kelly.  An interesting example is the double twist knot $K=C(5,14)$. By Theorem~\ref{double twist knots with gamma4 equal to 2} (8), we have $\gamma_4(K)=2$. But, because $17^2 \equiv 5 \Mod{71}$, $5$ is a quadratic residue modulo $71$ and we see that Murakami and Yasuhara's result fails to obstruct the existence of a locally-flat embedded M\"obius band in $B^4$ spanning $C(5,14)$.   So  while $K$ cannot bound a smoothly embedded M\"obius band in $B^4$, it remains an  interesting question  if $C(5,14)$ bounds a locally flat M\"obius band.


\section{Properties of Double Twist Knots}\label{Properties of Double Twist Knots}

In this section, we list various invariants of double twist knots that we will need in order to apply the general methods described in Section~\ref{general methods section}. We first consider the crosscap number of $C(m,n)$. The following proposition can be derived directly from \cite{Hirasawa_Teragaito:2006}, where an algorithm for determining the crosscap number of any 2-bridge knot is given.

\begin{proposition}[Hirasawa-Teragaito]\label{crosscap number of double-twist knots} If  $m>1$ and $n$ is even and nonzero, then
$$
\gamma_3(C(m,n))=\left \{ \begin{array}{ll}
2,&\text{if 
$m$ is odd or $m=2$ or $n=\pm 2$,}\\
3,& \text{if $m$ is even,  $m \neq 2$, and  $|n| \neq 2$.}
\end{array} \right.
$$
\end{proposition}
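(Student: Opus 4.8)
The plan is to derive the crosscap number of $C(m,n)$ directly from the Hirasawa--Teragaito algorithm~\cite{Hirasawa_Teragaito:2006}, which computes $\gamma_3$ for an arbitrary 2-bridge knot $K_{p/q}$ in terms of the continued fraction expansions of $p/q$. First I would recall that their algorithm produces $\gamma_3(K_{p/q})$ as the minimal length (number of terms) of certain \emph{even} or \emph{subtractive} continued fraction expansions of $p/q$, minimized over the finitely many equivalent fractions $q' \equiv \pm q^{\pm 1} \pmod p$. The strategy is therefore to take the classifying fraction $p/q = (mn+1)/n$ for $C(m,n)$, feed it into their formula, and show the output simplifies to the stated dichotomy. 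Since $\gamma_3$ is a positive integer and $C(m,n)$ is nontrivial under our standing hypotheses $m>1$, $n$ even and nonzero, the candidate values are narrowed considerably, and the essential task is to distinguish $\gamma_3 = 2$ from $\gamma_3 = 3$.

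Next I would establish the upper bounds, which are the constructive half. The checkerboard surface for the standard diagram $C(m,n)$ is nonorientable and has first Betti number equal to the number of regions in one color minus one, and this is easy to count from the plat picture: one shading yields a spanning surface whose genus is governed by $m$ and the other by $n$. For $m$ odd, or $m=2$, or $|n|=2$, I expect one of the two checkerboard surfaces (possibly after applying an equivalence of fractions, e.g. using $C(m,-2)=C(m-1,2)$ or swapping the roles of $m$ and $n$) to be nonorientable with first Betti number exactly $2$, giving $\gamma_3 \le 2$; in the generic even case where $m$ is even, $m\neq 2$, and $|n|\neq 2$, the natural nonorientable spanning surface has first Betti number $3$, giving $\gamma_3 \le 3$. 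The parity conditions here are exactly what control whether a checkerboard surface is orientable or not, so I would track carefully which shading produces a nonorientable surface in each case.

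The lower bounds are the real content, and here I would lean on the Hirasawa--Teragaito classification rather than reprove it. The point is that $\gamma_3(K) = 1$ forces $K$ to be a $(2,k)$-torus knot, which among double twist knots occurs only for $m=1$ (excluded by hypothesis); hence $\gamma_3(C(m,n)) \ge 2$ always. To push the lower bound to $3$ in the even case, I would verify that none of the fractions equivalent to $(mn+1)/n$ admits a length-$2$ expansion of the type their algorithm requires for $\gamma_3 = 2$; equivalently, that the arithmetic obstruction in~\cite{Hirasawa_Teragaito:2006} distinguishing crosscap number $2$ from $3$ is met precisely when $m$ is even, $m\neq 2$, and $|n|\neq 2$. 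This amounts to a finite congruence/continued-fraction computation once their criterion is written explicitly for $p/q = (mn+1)/n$.

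The main obstacle I anticipate is the bookkeeping around the equivalence of fractions $q' \equiv \pm q^{\pm 1} \pmod p$: the Hirasawa--Teragaito crosscap number is a minimum over all such representatives, so to prove the sharp lower bound $\gamma_3 = 3$ I must rule out a short expansion arising from \emph{any} equivalent fraction, not merely the obvious one $(mn+1)/n$. Translating their algorithmic minimum into a clean parity statement purely in terms of $m$ and $n$ — and confirming that the boundary cases $m=2$ and $|n|=2$ fall on the $\gamma_3=2$ side for the right structural reason (the flype $C(m,-2)=C(m-1,2)$ and the $m \leftrightarrow n$ symmetry) — is where the argument requires the most care. Once that reduction is in hand, the proposition follows by matching the two upper-bound constructions against the two lower bounds.
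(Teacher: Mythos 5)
Your overall route is the same as the paper's: the paper in fact gives no proof of Proposition~\ref{crosscap number of double-twist knots} at all, remarking only that it ``can be derived directly from'' the algorithm of \cite{Hirasawa_Teragaito:2006}, and your plan --- feed $(mn+1)/n$ into Hirasawa--Teragaito, get $\gamma_3\ge 2$ because only $(2,k)$-torus knots have crosscap number one, and decide between $2$ and $3$ by searching all fractions $q'\equiv\pm q^{\pm1}\Mod{p}$ for a length-two expansion with an odd entry --- is exactly the intended derivation. (One caveat on the torus-knot step: the hypothesis $m>1$ alone does not exclude torus knots, since $C(2,-2)=C(1,2)$ is the trefoil with $\gamma_3=1$; ruling this out requires the small arithmetic check that $mn+1 \nmid n\mp 1$, which fails precisely at $(m,n)=(2,-2)$, an edge case the proposition as stated overlooks.)

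The genuine gap is in your constructive half. The two checkerboard surfaces of the plat diagram $C(m,n)$ have first Betti numbers $m$ and $n$, \emph{not} $2$ and $3$: this is visible in the paper's own Proposition~\ref{Goeritz matrices}, where the Goeritz matrices are $m\times m$ and $n\times n$, and the rank of a Goeritz matrix equals $\beta_1$ of the corresponding checkerboard surface. So for, say, $C(3,8)$ (where $\gamma_3=2$ because $m$ is odd), the checkerboard surfaces have $\beta_1=3$ and $8$; neither realizes the bound $\gamma_3\le 2$, and no swap $m\leftrightarrow n$ or flype repairs this. Likewise, in the generic case ($m,n$ even, $m\neq 2$, $|n|\neq 2$) the checkerboard surfaces only give $\gamma_3\le\min(m,n)$, not $\gamma_3\le 3$. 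The surfaces that actually realize the upper bounds are the plumbing surfaces that Hirasawa--Teragaito attach to continued fraction expansions: an expansion of length $k$ containing an odd entry is realized by a nonorientable spanning surface with $\beta_1=k$. Thus $[m,n]$ gives $\beta_1=2$ when $m$ is odd; $[2,-(n+1)]$, an expansion of the equivalent fraction $(2n+1)/(n+1)$, gives $\beta_1=2$ when $m=2$ (and symmetrically $[2,-(m+1)]$ when $|n|=2$); and $[m,n-1,1]$ --- equivalently, adding a crosscap to the genus-one Seifert surface --- gives $\beta_1=3$ in general. With those substitutions, plus the finite verification you describe but do not carry out (that for $m,n$ even with $m\neq 2\neq|n|$, every length-two expansion of every equivalent fraction is $[\pm m,\pm n]$ or $[\pm n,\pm m]$, all of whose entries are even), your outline closes up into a correct proof.
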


In order to apply the Jabuka and Kelly obstructions from Theorem~\ref{JK theorems} we need to determine the signature and Arf invariants of $C(m,n)$. Both are well known and appear in several places in the literature.

\begin{proposition}\label{sigma}
Assuming $m>0$ and $n$ is even, then the values of $(\sigma+4\mathrm{Arf})(C(m,n)) \Mod{8}$ are given as follows.
\begin{center}
\begin{tabular}{|r|r|r||r|r|} \hline
 & \multicolumn{2}{|c||}{$n\ge 0$} & \multicolumn{2}{|c|}{$n<0$} \\ \hline
 {\rm (mod 4)} & $n \equiv 0$  & $n \equiv 2$ & $n \equiv 0$ & $n \equiv 2$ \\ \hline
 $m \equiv 0$ & $0$ & $0$ & $2$ & $2$ \\
 \hline
 $m \equiv 1$  & $0$ & $-2$ & $2$ & $0$ \\
 \hline
 $m \equiv 2$  & $0$ & $4$ & $2$ & $-2$ \\
\hline
$m \equiv 3$  & $0$ & $2$ & $2$ & $4$ \\
\hline
 \end{tabular}
\end{center}
\end{proposition}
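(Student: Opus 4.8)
The plan is to compute the two quantities $\sigma(C(m,n))$ and $\mathrm{Arf}(C(m,n))$ separately and then reduce their combination $\sigma + 4\,\mathrm{Arf}$ modulo $8$. The Arf invariant is the easy ingredient. Since $n$ is even, the determinant $\det(C(m,n)) = |mn+1|$ is odd, and the classical congruence determines the Arf invariant: $\mathrm{Arf}(C(m,n)) = 0$ when $|mn+1|\equiv \pm 1\Mod 8$ and $\mathrm{Arf}(C(m,n)) = 1$ when $|mn+1|\equiv \pm 3\Mod 8$. Expanding $mn+1\Mod 8$ in terms of the residues of $m$ and $n$ then gives the Arf invariant in each case. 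I would note at the outset that this computation genuinely requires $m$ and $n$ modulo $8$, not merely modulo $4$ (for instance $\mathrm{Arf}(C(1,2))=1$ but $\mathrm{Arf}(C(1,6))=0$, even though $2\equiv 6\Mod 4$).

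For the signature I would use an explicit symmetric Seifert form. When $m$ and $n$ are both even, $C(m,n)$ has genus one with symmetrized Seifert matrix $\begin{pmatrix} m & 1\\ 1 & -n\end{pmatrix}$, whose determinant $-(mn+1)$ recovers the correct value, and whose signature is read off immediately: the form is indefinite (so $\sigma = 0$) exactly when $mn>0$, and positive definite (so $\sigma = 2$) exactly when $m>0>n$. Combined with the Arf computation, this already produces every entry of the two rows $m\equiv 0$ and $m\equiv 2\Mod 4$. When $m$ is odd the knot need not have genus one (for example $C(3,4)=K_{13/4}$ has genus two), and the signature in fact grows linearly in $n$; here I would compute $\sigma$ either by a Seifert-form recursion under adding full twists, with base cases the torus knots $C(1,n)$ and the small twist knots, or by applying the Gordon--Litherland formula $\sigma = \mathrm{sign}(G_F)-\mu(F)$ to a checkerboard surface, which has the advantage of being uniform across all parities. (Both the signature and the Arf invariant of $C(m,n)$ are also available in the literature and could simply be cited.)

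For $n<0$ the diagram $C(m,n)$ is not alternating, so I would first replace it by the alternating diagram $C(m-1,1,-n-1)$ of the same knot, as in the introduction, before computing. As a labor-saving device I would record the structural feature visible in the table, namely that passing from $n$ to $-n$ shifts $\sigma + 4\,\mathrm{Arf}$ by $2\Mod 8$ in every case; this reduces the work to the columns with $n\ge 0$. With the signature formula and the Arf invariant in hand, the final step is to assemble the sixteen cases indexed by $m\Mod 4$, $n\Mod 4$, and the sign of $n$.

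The main obstacle is bookkeeping rather than anything conceptual. Both $\sigma$ and $\mathrm{Arf}$ separately depend on $m$ and $n$ modulo $8$ (indeed $\sigma(C(1,n))=\pm n$ is unbounded), and it is only the combination $\sigma + 4\,\mathrm{Arf}$ that collapses to a function of the residues modulo $4$; verifying this compensation case by case is where the real effort lies. The odd-$m$ computation is the most delicate part, both because the signature grows and because one must fix a consistent handedness convention so that the signs of $\sigma$ agree with the table (the convention forced here is $\sigma = \mathrm{sign}(V+V^{T})$, which gives $\sigma = +2$ for the positive-definite genus-one cases with $m>0>n$).
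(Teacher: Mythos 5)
Your proposal is correct, and its overall architecture is the same as the paper's: compute $\sigma$ and $\mathrm{Arf}$ separately, then assemble the sixteen cases indexed by $m,n \pmod 4$ and the sign of $n$. Where you differ is in how the two ingredients are obtained, and your route is more self-contained. The paper simply cites closed formulas: Madeti's formula for the Arf invariant ($mn/4$ or $n(2m+n)/8$ modulo $2$, according to the parity of $m$) and the continued-fraction signature formula ($\sigma = n,\,0,\,n+2,\,2$ in the four parity/sign cases) from Burde--Zieschang and Gallaspy--Jabuka. You instead get Arf from Murasugi's congruence applied to $\det C(m,n)=|mn+1|$, and for $m$ even you get $\sigma$ from the genus-one symmetrized Seifert form $\begin{pmatrix} m & 1\\ 1 & -n \end{pmatrix}$, whose definiteness analysis (indefinite when $mn>0$, positive definite when $m>0>n$) reproduces exactly the cited values $0$ and $2$. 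These derivations are correct and buy independence from the literature for the even-$m$ rows, while the paper's citations are uniform over all parities; for $m$ odd you, like the paper, end up citing or deferring the signature computation, so the two proofs converge there. Both treatments correctly flag the mirror-image subtlety that makes $\sigma+4\,\mathrm{Arf}$ well defined mod $8$ only up to sign, which is why the entries $\pm2$ are convention-dependent. One caveat: your ``labor-saving'' observation that $n \mapsto -n$ shifts $\sigma + 4\,\mathrm{Arf}$ by $2$ modulo $8$ is read off the finished table, so it cannot be used to skip the $n<0$ columns without an independent argument (e.g., comparing $|mn-1|$ with $|mn+1|$ modulo $8$ together with the signature change); since your Seifert-form and cited-formula computations handle $n<0$ directly anyway, this is a redundancy to drop rather than a gap.
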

\begin{proof}

If $m>0$ and $n$ is even, then the $\mathrm{Arf}$ invariant of $C(m,n)$ is as follows (see \cite{Madeti}):

\begin{align*}
    \mathrm{Arf}(C(m,n))&\equiv \left \{ \begin{array}{lll} mn/4 & \Mod{2},&\text{ if $m$ is even,}\\
    n(2m+n)/8& \Mod{2},&\text{ if $m$ is odd.}
    \end{array}\right.
\end{align*}

\noindent The signature of $C(m,n)$ is easily computed from certain continued fraction expansions of  $(mn+1)/n$ as described in Chapter 12 of Burde and Zeischang \cite{BZ:2003} or the work of Gallaspy and Jabuka \cite{GJ:2015}. 

$$\sigma(C(m,n))=\left \{ \begin{array}{rl} 
n, & \text{ if $m$ is odd and $n>0$,}\\
0, &\text{ if $m$ is even and $n>0$,}\\
n+2, & \text{ if $m$ is odd and $n<0$,}\\
2, &\text{ if $m$ is even and $n<0$.}
\end{array} \right.
$$

\smallskip
 
As already mentioned, we regard a knot $K$ and its mirror image $K^*$ as equivalent.   It is well know that $\sigma(K^*)=-\sigma(K)$ while $\mathrm{Arf}(K^*)=\mathrm{Arf}(K)$. Thus the  quantity $\sigma +4\mathrm{Arf}$ can change when $K$ is replaced with its mirror image and is therefore not  an invariant of knot type. However, because $4$ and $-4$ are congruent modulo 8, it follows that  $\sigma +4\mathrm{Arf}$ is negated when taking mirror image and is thus unchanged, modulo 8,  when equal to 0 or 4. Using the computations of the signature and Arf invariant given above, we  obtain the values of  $(\sigma +4\mathrm{Arf})(C(m,n)) \Mod{8}$ in  the table.
\end{proof}

The additional data needed to apply the Jabuka and Kelly obstructions of Theorem~\ref{JK theorems} are the positive and negative Goeritz matrices of $C(m,n)$. With $m>1$ and $n$ even and nonzero, the diagrams $C(m,n)$ and $C(m-1,1,-n-1)$ are alternating when $n>0$ and $n<0$, respectively. Using the checkerboard surfaces determined by these  alternating diagrams, it is straightforward to derive the associated Goeritz matrices as described in  \cite{Gordon-Litherland} (see also \cite{GJ:2015}).  To describe these matrices, it is helpful to define the following non-singular $k \times k$ matrix $T_k$.

$$T_k=  \left (
\begin{array}{rrrrr}
2 &-1&\cdots&& 0\\
-1&2\\
\vdots&&\ddots&&\vdots\\
&&&2&-1\\
0 &&\cdots &-1&2
\end{array}\right )_{k \times k}$$

\noindent
Here the diagonal contains all $2$s, the super and sub diagonals are all $-1$s, and all other entries are zero.

\begin{proposition} 
\label{Goeritz matrices}
Suppose $m>1$,  $n > 0$, and $n$ is even. Then  positive- and negative-definite Goeritz matrices $G_+$ and $G_-$, respectively, for $C(m,n)$ are
 $$
\begin{array}{cc}

G_+=  \left (
\begin{array}{rcr|c}
& & & 0 \\
& T_{n-1} & & \vdots \\ 
& & & -1  \\ \hline
0 & \cdots & -1  & m+1
\end{array}\right )
&

G_-=  \left (
\begin{array}{rcr|c}
& & & 0 \\
& -T_{m-1} & & \vdots \\ 
& & & 1  \\ \hline
0 & \cdots & 1  & -(n+1) 
\end{array}\right )
\end{array}
$$

\noindent and   positive- and negative-definite Goeritz matrices $\Gamma_+$ and $\Gamma_-$, respectively, for $C(m,-n)$ are
$$
\begin{array}{cc}

\Gamma_+=\left ( \begin{array}{ccc}
m&~&-1\\
-1&~&n
\end{array} \right )
&

\Gamma_-=  \left (
\begin{array}{rcr|c|rcr}
& & & 0 & & &\\
& -T_{m-2} & & \vdots & & 0 & \\ 
& & & 1  \\ \hline
0 & \cdots & 1  & -3 &1&\cdots&0\\ \hline
&&&1 \\
& 0 &&\vdots&&-T_{n-2}\\
&&&0
\end{array}\right )
\end{array}
$$
\end{proposition}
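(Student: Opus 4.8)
The plan is to read off the Goeritz matrices directly from the checkerboard colorings of the appropriate alternating diagrams, using the Gordon--Litherland recipe. For the case $n>0$, the diagram $C(m,n)$ is already alternating, while for $C(m,-n)$ the equivalent alternating diagram is $C(m-1,1,n-1)$. The first step is to fix a checkerboard coloring of each diagram and enumerate the bounded complementary regions (or the white regions, depending on which shading gives the nonorientable surface), assigning one generator of the Goeritz lattice to each region other than the chosen base region. Each vertical box of $k$ half-twists contributes a chain of $k-1$ small bigon regions, and the continued-fraction structure of $C(m,n)$ guarantees that consecutive regions in such a chain are joined by a single crossing; this is precisely what produces the tridiagonal block $T_k$ (or $-T_k$), since the Goeritz form has $+2$ or $-2$ on the diagonal and $\mp 1$ on the off-diagonals whenever two regions meet at exactly one crossing of a fixed sign.

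Second, I would compute the signs $\eta$ of the crossings for the chosen shading and verify, via the Gordon--Litherland sign convention, that the two shadings yield the claimed definiteness: one checkerboard surface is positive-definite and the other negative-definite, consistent with the fact that an alternating diagram has one positive-definite and one negative-definite Goeritz form. The off-diagonal coupling between the long twist region and the single extra generator (the entry $-1$ next to the $(m+1)$ corner in $G_+$, and the analogous $+1$ entries) arises because the clasp or the singleton box $1$ in $C(m-1,1,n-1)$ connects the end of one twist chain to the distinguished region. For the $n<0$ case, the presence of the middle $1$ in $C(m-1,1,n-1)$ is exactly what splits $\Gamma_-$ into two tridiagonal blocks $-T_{m-2}$ and $-T_{n-2}$ glued along a central $-3$ generator, while the even simpler alternating form for the shading producing $\Gamma_+$ collapses to the $2\times 2$ matrix displayed.

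Third, I would confirm definiteness and the corner/diagonal values by a small induction or a direct determinant check: since $\det T_k = k+1$, the matrices $G_\pm$ and $\Gamma_\pm$ should have determinant $\pm(mn+1)$, matching the known determinant $|mn+1|$ of $C(m,n)$ from the classifying fraction. This determinant computation is both a sanity check and, together with the obvious positivity/negativity of the tridiagonal $\pm T_k$ blocks, establishes $\epsilon$-definiteness after checking that the Schur complement of the corner entry stays the correct sign.

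The main obstacle I expect is purely bookkeeping: getting the \emph{signs and the precise corner entries} right. The tridiagonal bulk is forced by the twist regions, but the exact value in the bottom-right corner ($m+1$ versus $m$, $-(n+1)$ versus $-n$, the central $-3$) depends delicately on how many crossings are incident to the distinguished region and on the Gordon--Litherland convention for self-incidences versus pairwise incidences. I would pin these down by carefully drawing the small cases (say $C(2,4)$ and $C(3,-4)$), computing their Goeritz matrices by hand, matching against the claimed forms, and then arguing that adding further half-twists only lengthens the tridiagonal chain without altering the corner data. Once the base cases and the chain-extension step are verified, the general formula follows.
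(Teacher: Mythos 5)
Your proposal is correct and takes essentially the same route as the paper, which offers no detailed proof at all: it simply observes that the matrices follow by applying the Gordon--Litherland construction to the checkerboard surfaces of the alternating diagrams $C(m,n)$ (for $n>0$) and $C(m-1,1,n-1)$ (for $C(m,-n)$), exactly as you outline, with the twist regions producing the tridiagonal $\pm T_k$ blocks and the remaining large region producing the corner entry. One small correction to your determinant sanity check: the determinant of $C(m,-n)$ is $mn-1$ (its classifying fraction is $(mn-1)/n$), so $\Gamma_\pm$ must have determinant $\pm(mn-1)$, not $\pm(mn+1)$.
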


These matrices can also be represented by weighted graphs. For example, $G_-$ with $n>0$ would appear as below. 

\bigskip

\centerline{\includegraphics[width=2in]{Figures/graph2.pdf}}

\section{Double twist knots with $\gamma_4=0 $ or $\gamma_4=1$}\label{gamma_4 is 0 or 1}

In this section, we first determine exactly which double twist knots are slice ($\gamma_4=0$), using the work of Lisca \cite{Lisca}. We then use this result to find families of double twist knots with $\gamma_4 =1$.

\begin{figure}
\includegraphics[width=11cm]{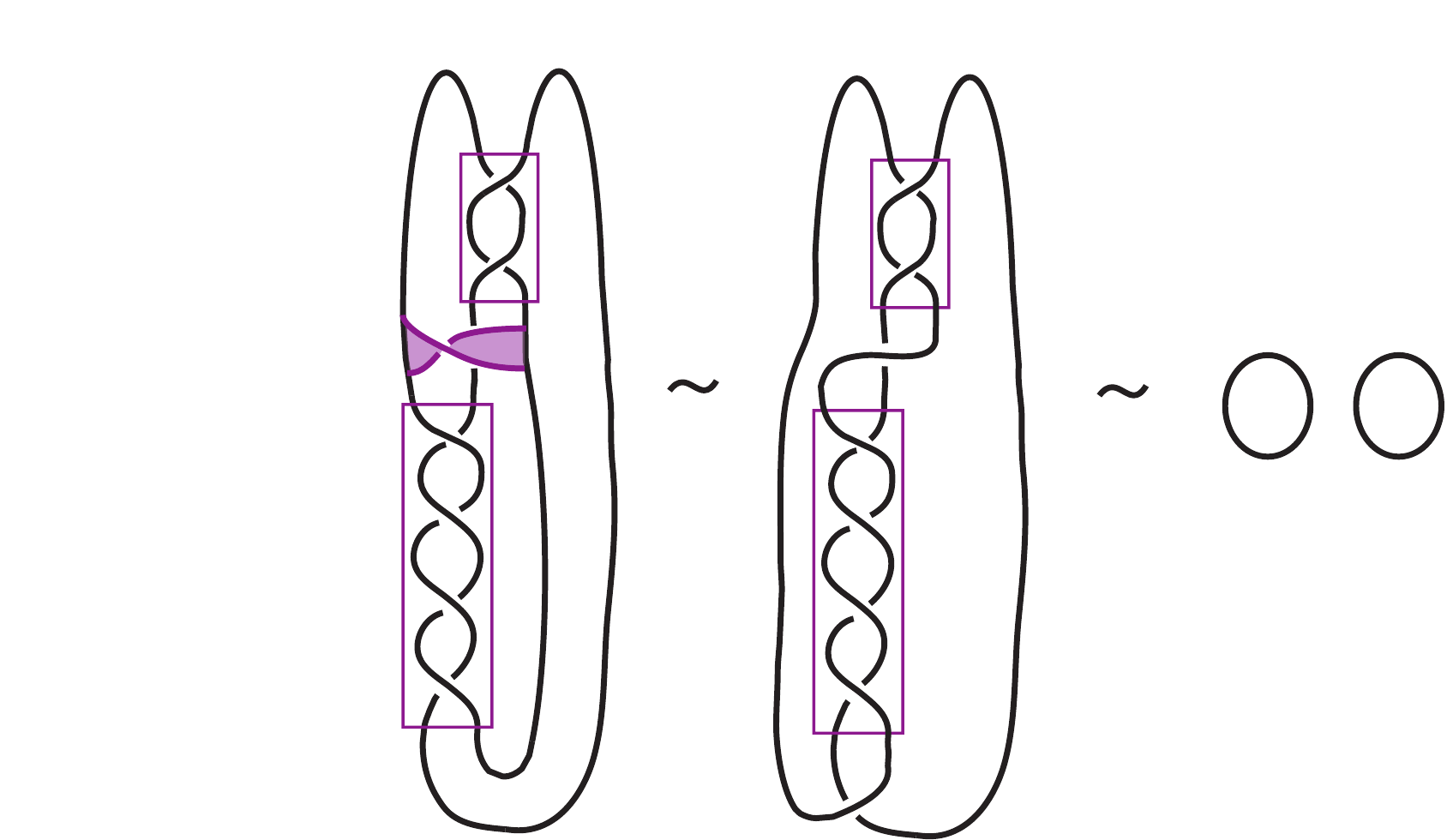}

\caption{A band move on $C(2,4)$ which results in the unlink.  A similar move applied to $C(m,m+2)$, for any $m$, gives the same result.}
\label{band move to unlink}
\end{figure}

\begin{theorem}\label{double twist knots with gamma4 equal to 0} If  $m>1$ and $n$ is even and nonzero, then $\gamma_4(C(m,n))=0$  if and only if $|m-n|=2$ or $(m,n)=(5,-2)$. 
\end{theorem}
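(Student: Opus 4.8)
The plan is to recognize that $\gamma_4(C(m,n))=0$ means precisely that $C(m,n)$ is slice, so the theorem is really a classification of the slice double twist knots. For the forward (``if'') implication I would exhibit explicit ribbon moves, and for the reverse (``only if'') implication I would invoke Lisca's complete determination of which $2$-bridge knots are slice \cite{Lisca}, filtered first by the classical determinant obstruction.

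For the ``if'' direction, suppose $|m-n|=2$. Since $n$ is even, $m$ is even as well, so $C(m,n)$ is genuinely a knot. The band move depicted in Figure~\ref{band move to unlink} carries $C(m,m+2)$ to the two-component unlink; the unlink bounds two disjoint disks, so this move exhibits $C(m,m+2)$ as a ribbon knot, hence slice. The case $n=m-2$ reduces to this one via the symmetry $C(m,n)=C(n,m)$, which rewrites $C(m,m-2)=C(m-2,m)$ in the form $C(k,k+2)$ with $k=m-2$. For the sporadic pair $(5,-2)$, the flype $C(m,-2)=C(m-1,2)$ gives $C(5,-2)=C(4,2)$, and $C(4,2)$ already satisfies $|m-n|=2$ (indeed it is the slice knot $6_1$, with fraction $9/2$); thus it is slice by the same band move.

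For the ``only if'' direction, I would compute the classifying fraction $p/q=(mn+1)/n$ and test it against Lisca's list. A first, purely number-theoretic filter is the classical fact that a slice knot has square determinant: here $\det(C(m,n))=|mn+1|$ must be a perfect square. One checks directly that $|m-n|=2$ forces $mn+1=(m\mp 1)^2$ and that $(5,-2)$ gives $|mn+1|=9$, so the known slice families pass this test; the work is to show almost nothing else does. The remaining, and main, step is to run Lisca's criterion \cite{Lisca} on the fraction $(mn+1)/n$: writing $p=|mn+1|$ as a square and taking $q=\pm n$ up to the $2$-bridge equivalence $q'\equiv \pm q$ or $q'q\equiv \pm 1 \pmod{p}$, I would determine exactly which pairs $(m,n)$ place $(mn+1)/n$ in one of Lisca's slice families by analyzing the associated subtractive continued fraction expansions.

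The hard part will be this last matching. Lisca's slice families are described by continued fraction patterns, and pinning down which double twist fractions $(mn+1)/n$ realize such a pattern is a delicate number-theoretic argument: one must combine the square-determinant constraint with the congruence conditions on $q$ and show that, after accounting for all the symmetries ($m\leftrightarrow n$, negation of both, and mirror image), the only surviving solutions are $|m-n|=2$ together with the single extra representative $(5,-2)$. I expect the bookkeeping over the several shapes a square $|mn+1|$ can take, and the ruling out of the ``near misses'' that have square determinant but fail Lisca's finer obstruction, to be the crux of the proof.
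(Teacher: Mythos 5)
Your skeleton is the same as the paper's: the ``if'' direction via the band move of Figure~\ref{band move to unlink} together with the symmetries $C(m,n)=C(n,m)$ and $C(5,-2)=C(4,2)$, and the ``only if'' direction via Lisca's classification of slice 2-bridge knots. That half of your write-up is fine. The problem is that your ``only if'' direction is a plan rather than an argument: the entire content of the theorem sits in the step you defer (``determine exactly which pairs $(m,n)$ place $(mn+1)/n$ in one of Lisca's slice families''), and the route you sketch for that step is not the one that completes. The Fox--Milnor square-determinant filter is never needed --- the paper does not use it at all --- and organizing the case analysis around ``the several shapes a square $|mn+1|$ can take'' replaces a direct finite check with genuinely unpleasant number theory that you would then still have to push through Lisca's criterion.

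The missing idea is that for double twist knots the four subtractive continued fractions to be tested against Lisca's lists have completely explicit and very rigid forms. For $m,n\ge 2$ the fractions of $C(m,n)$ are $[m+1,2^{[n-1]}]^-$, $[2^{[n-1]},m+1]^-$, $[2^{[m-1]},n+1]^-$ and $[n+1,2^{[m-1]}]^-$; for $C(m,-n)$ they are $[m,n]^-$, $[n,m]^-$, $[2^{[m-2]},3,2^{[n-2]}]^-$ and $[2^{[n-2]},3,2^{[m-2]}]^-$ (here $2^{[j]}$ denotes $j$ consecutive entries equal to $2$). Strings of these shapes --- all $2$s except one larger entry, or length two, or all $2$s except one $3$ --- can be compared with Lisca's families (Lemmas~8.1--8.5 of \cite{Lisca}, plus the additional family noted by Lecuona \cite{Lecuona}, which you do not mention) essentially by inspection: only the family $[c_1+1,2^{[c_1+1]}]^-$ of Lisca's Lemma~8.1 can match, and the match forces $n=m+2$ or $m=n+2$ in the positive case, while in the negative case it forces $m=2$ or $n=2$, whence $[3,2^{[n-2]}]^-$ or $[3,2^{[m-2]}]^-$ matches only for $n=5$ (excluded, since $n$ is even) or $m=5$, i.e.\ $(m,n)=(5,-2)$. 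Without this observation, and without handling all four fraction representatives, your outline cannot be completed as written; the determinant bookkeeping you anticipate as ``the crux'' is a detour that by itself rules out almost nothing.
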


\begin{proof}
In Figure~\ref{band move to unlink}, we exhibit an orientable band move on $C(m,m+2)$ which results in the unlink, showing that these knots are slice.  This band move is found in Figure 2 of~\cite{Lisca}. The knot $C(5,-2)$ is therefore slice because $C(5,-2)=C(2,4)$. 

We use Lisca's classification of slice 2-bridge knots \cite{Lisca} to complete the proof. Lisca works with the {\it subtractive} continued fraction $[b_1,b_2,\dots, b_k]^-$  given by
$$[b_1,b_2,\dots, b_k]^- = b_1- \frac{1}{b_2 - \displaystyle \frac{1}{\ddots - \displaystyle\frac{1}{b_k}}}$$ with the additional assumption that $b_i\ge 2$ for all $i$.   He also denotes a sequence of $j$ consecutive $b_i$s all equal to 2 as $2^{[j]}$. In a series of lemmas, Lisca proves that a 2-bridge knot $K_{p/q}$ is slice if and only if the subtractive continued fraction of either $p/q$, $p/q'$, $p/(p-q)$, or $p/(p-q')$ (where $q'$ is the multiplicative inverse of $q$ modulo $p$) appears in one of seven families (see Lemmas~8.1-5 in \cite{Lisca}). Lecuona observed that Lisca missed one additional family that can be found in Remark 3.2(II)(3) of \cite{Lecuona} and should have appeared as an additional case in Lisca's Lemma~8.4. Based on this work, we establish when a double twist knot is slice by finding the four subtractive continued fractions for $C(m,n)$ and then determine if any one of the continued fractions appear in one of the eight slice 2-bridge knot families for some choice of $m$ and $n$.

Suppose that $K=C(m,n)$ is a nontrivial, double twist knot with $m>0$ and $n$ even and nonzero. If $m=1$, then $K$ is a nontrivial, torus knot and cannot be slice because $\sigma(K) \ne 0$. Therefore, we assume $m \ge 2$. If $n \ge 2$, then the subtractive continued fractions of $C(m,n)$ are $[m+1,2^{[n-1]}]^-$, $[2^{[n-1]},m+1]^-$, $[2^{[m-1]},n+1]^-$, and $[n+1,2^{[m-1]}]^-$. The only family of slice 2-bridge knots which could possibly contain these continued fractions is $[c_1+1,2^{[c_1+1]}]^-$ (Lemma~8.1 \cite{Lisca}).
We now consider the necessary conditions on $m$ and $n$ for this to occur. If $[m+1,2^{[n-1]}]^-=[c_1+1,2^{[c_1+1]}]^-$, then $c_1=m$ and $n=m+2$ which gives the desired result. Similarly, if $[n+1,2^{[m-1]}]^-=[c_1+1,2^{[c_1+1]}]^-$, then $c_1=n$ and $m=n+2$ which again gives the desired result. 

Next we consider $C(m,-n)$ with $n \ge 2$. For $C(m,-n)$, we have subtractive continued fractions $[m,n]^-$, $[n,m]^-$, $[2^{[m-2]},3,2^{[n-2]}]^-$, and $[2^{[n-2]},3,2^{[m-2]}]^-$. Neither $[m,n]^-$ nor $[n,m]^-$ are in one of the slice families of 2-bridge knots and the latter two continued fractions can only be in the family $[c_1+1,2^{[c_1+1]}]^-$ when $m=2$ or $n=2$, respectively. In the first case, if $[3,2^{[n-2]}]^-=[c_1+1,2^{[c_1+1]}]^-$, then $n=5$ but this contradicts our assumption that $n$ is even. In the final case, if $[3,2^{[m-2]}]^-=[c_1+1,2^{[c_1+1]}]^-$ then $m=5$. This gives the stevedore's knot $C(5,-2)$ which is equivalent to $C(2,4)$.
\end{proof}

\begin{figure}
\centering
  \parbox[c]{1.75in}{\includegraphics[width=1.752in]{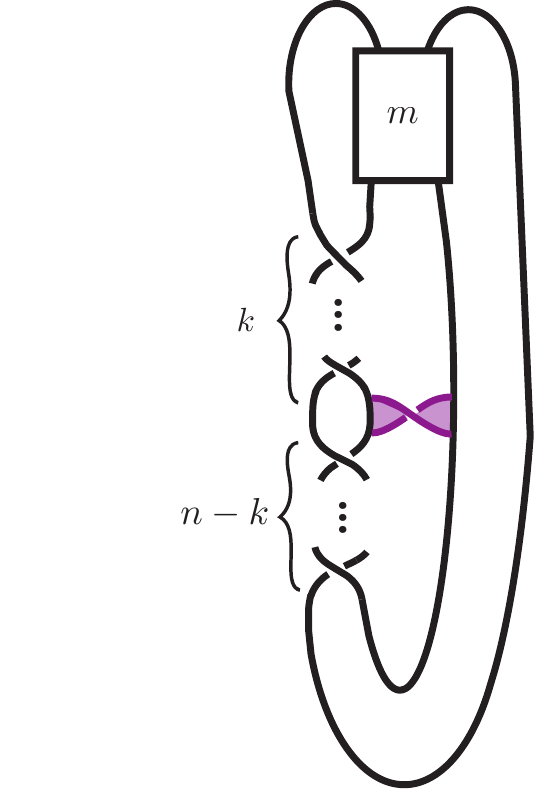}}
  \hspace{1in}
  \parbox[c]{2in}{\includegraphics[width=2in]{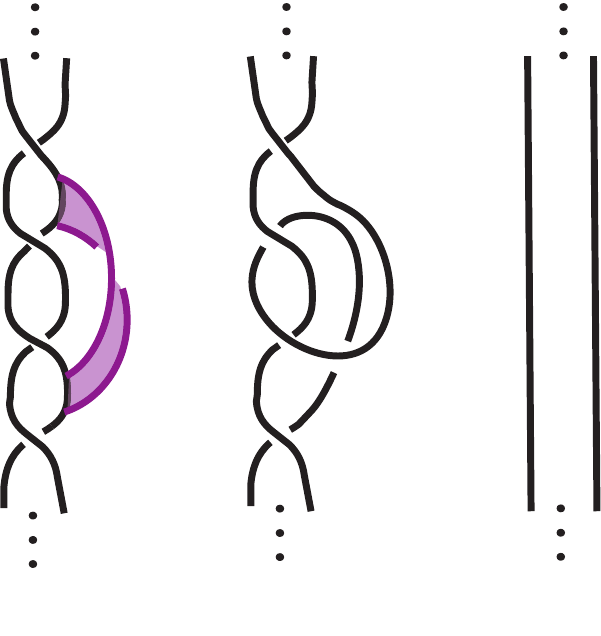}}
  \captionof{figure}{Two types of band moves. On the left, a horizontal band move with $n>0$ and $\epsilon=-1$.  On the right, a band move that removes four half-twists.}
  \label{band moves}
\end{figure}

In an effort to find band moves from double twist knots to slice knots,   we systematically studied ``horizontal'' band moves  of the form 
$C(m,n) \rightarrow C(m,k,\epsilon,n-k)$,
where $k$ is any integer and $\epsilon=\pm 1$. Such a band move is shown on the left in Figure~\ref{band moves}. Depending on the values of $m, n$ and $k$, the move may or may not be nonorientable.  (Notice that the band move in Figure~\ref{band move to unlink} is equivalent to a horizontal band move with $k=1$ and $\epsilon=-1$.) The advantage of limiting ourselves to band moves of this kind is that they produce 2-bridge links, and using Lisca's classification, we can tell precisely which band moves yield slice knots. We also make use of the band move shown on the right in Figure~\ref{band moves} which was introduced by Kearney~\cite{Kearney}. This move can be used to change either $m$ or $n$ in $C(m,n)$ by $\pm 4$.
\bigskip

We are now able to identify several families of double twist knots for which $\gamma_4=1$. Presumably other types of band moves remain to be discovered that could identify many more such families.

\begin{theorem}\label{double twist knots with gamma4 equal to 1}
If $m>0$, $n$ is even and nonzero,  and $(m,n)$ is one of the following, then $\gamma_4(C(m,n))=1$.  

\begin{enumerate}
\item $(1, n)$, except the unknot $(1,-2)$, 
\item  $(2,-6)$, $(2,-10)$, $(3,8)$, $(5,-6)$, $(6,-6)$, 
\item $(4,n)$, except $(4,2)$ and $(4,6)$,
\item $(m,\pm 4)$, except $(6,4)$, or
\item $(m,n)$ where $|m-n|\in \{1, 3, 6, 7\}$, except $(5,-2)$.
\end{enumerate}
\end{theorem}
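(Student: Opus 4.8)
The plan is to prove, for each listed knot $K=C(m,n)$, the two matching inequalities $\gamma_4(K)\ge1$ and $\gamma_4(K)\le1$.

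The lower bound is immediate from Theorem~\ref{double twist knots with gamma4 equal to 0}, which says that a double twist knot is slice (equivalently $\gamma_4=0$) exactly when $|m-n|=2$ or $(m,n)=(5,-2)$. So I would simply check that no listed pair, once the stated exceptions are removed, meets this criterion: in family (1) the difference $|1-n|$ is odd and hence never $2$; in family (2) the five differences are $8,12,5,11,12$; in family (5) one has $|m-n|\in\{1,3,6,7\}$, none equal to $2$, and the unique slice pair with such a difference, $(5,-2)$, is excluded; and in families (3) and (4) the slice members are exactly the excepted knots (using $C(2,4)=C(4,2)$). Thus $\gamma_4(K)\ge1$ in every case.

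For the upper bound I would exhibit, case by case, a single nonorientable band move from $C(m,n)$ to a slice knot; by the construction recalled in Section~\ref{general methods section} this produces a M\"obius band in $B^4$ and hence $\gamma_4(K)\le1$. A uniform observation makes all these moves nonorientable without extra argument: each target below has odd determinant, so it is a one-component knot, and a band move on a knot produces a knot precisely when it is nonorientable---so every move lands on a slice \emph{knot} rather than a slice link. The easy families are then dispatched as follows. For (1) the torus knot $C(1,n)=T(2,n+1)$ already bounds a M\"obius band in $S^3$, so $\gamma_4\le\gamma_3=1$ with no band move required. For (3) and (4) a single Kearney $\pm4$ move erases a four-half-twist box, giving $C(4,n)\to C(0,n)$ and $C(m,\pm4)\to C(m,0)$, each of which is the unknot. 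For the $|m-n|=6$ part of (5) a single $\pm4$ move changes the difference from $6$ to $2$, landing on the slice knot $C(a,a+2)$. For $|m-n|\in\{1,3\}$ a single horizontal band move $C(m,n)\to C(m,1,-1,n-1)$, or the analogous move splitting the $m$-box when $m>n$, collapses the classifying fraction to $\pm1$, i.e. to the unknot.

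The remaining cases---the $|m-n|=7$ part of (5) and the sporadic pairs of (2)---are where I expect the real difficulty, and they are the main obstacle. A $\pm4$ move cannot help here, since it preserves the parity of $m-n$ and so can never reach the difference $2$ from an odd difference, and the sporadic pairs do not sit one $\pm4$ move from a slice knot either. For these I would run the systematic search over the horizontal moves $C(m,n)\to C(m,k,\epsilon,n-k)$ introduced in Section~\ref{gamma_4 is 0 or 1}, compute the classifying fraction $p/q$ of the target from its continued fraction, and then decide sliceness with Lisca's classification: form the four fractions $p/q$, $p/q'$, $p/(p-q)$, $p/(p-q')$, write each as a subtractive continued fraction with all entries $\ge2$, and test whether one of them lies in the eight slice families. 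The delicate work is finding parameters $(k,\epsilon)$---which may depend on $m$ and $n$---that produce a slice target uniformly across each infinite subfamily, and then completing the Lisca match; by contrast the lower bound and the $\pm4$-move cases are routine.
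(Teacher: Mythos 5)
Your lower bound and your treatment of families (1), (3), (4), and of the $|m-n|\in\{1,3,6\}$ portion of (5) are sound and essentially match the paper: the paper also gets the lower bound from Theorem~\ref{double twist knots with gamma4 equal to 0}, uses the torus-knot/M\"obius band observation for (1) and the four-half-twist (Kearney) move for (3) and (4), and your direct-to-unknot moves for differences $1$ and $3$ are legitimate variants of the paper's moves to the slice knot $C(m+2,m)$ (the paper even records your Kearney-move alternative for difference $6$ as a remark). Your parity argument that a band move landing on a knot is automatically nonorientable is also fine.

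However, there is a genuine gap exactly where you flag ``the real difficulty'': the $|m-n|=7$ family and the sporadic pairs of (2) are never proved --- you only describe the search you would run, and these cases are the substance of the theorem. The paper resolves them with concrete data that your plan does not supply. First, $C(2,-6)=C(5,2)$ and $C(2,-10)=C(9,2)$ by the clasp move $C(m,-2)=C(m-1,2)$, so two of the five sporadic pairs are absorbed into case (5) with differences $3$ and $7$; your proposal misses this reduction. Second, for the infinite family $|m-n|=7$ the paper exhibits the uniform horizontal move $C(m,m+7)\rightarrow C(m,5,-1,m+2)$ and shows the target is slice because its subtractive continued fraction $[m+1,2^{[2]},3,2^{[m-1]}]^-$ lies in the family of Lemma~8.1 of \cite{Lisca} for every $m$ simultaneously; this uniform-in-$m$ Lisca match is precisely the nontrivial step you defer. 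Third, the remaining sporadic knots need individual moves and identifications: $C(5,-6)\rightarrow C(5,-4,1,-2)=K_{9/2}=C(4,2)$, $C(6,-6)\rightarrow C(6,-2,1,-4)=K_{9/2}$, and $C(3,8)\rightarrow C(3,3,-1,5)=K_{25/7}=8_9$, the last slice because $25/18=[2^{[2]},3,4]^-$ lies in the family of Lisca's Lemma~8.4. Without these (or equivalent) explicit band moves and Lisca verifications, your argument establishes the theorem only for parts (1), (3), (4) and the $\{1,3,6\}$ portion of (5).
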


\begin{proof}
None of the above knots are slice by Theorem~\ref{double twist knots with gamma4 equal to 0}.  The knots $C(1,n)$ are $(2,1-n)$-torus knots which bound M\"obius bands in $S^3$. For the remaining cases,  it suffices to produce a band move  that takes $K=C(m,n)$ to a slice knot, thus showing that $K$ bounds a M\"obius band in $B^4$. Note first that $(2,-6)$ and $(2,-10)$ are equivalent to $(5,2)$ and  $(9,2)$, respectively, by the clasp move and hence are covered by the final case.
The case of the knot $C(5,-6)$ is already known (see~\cite{Ghanbarian2020}), but we include it here for completeness. A single band move applied to $C(5,-6)$ produces the stevedore's knot $6_1=C(4,2)$, which is slice:
$$C(5,-6) \rightarrow C(5,-4,1,-2)=K_{9/2}=C(4,2)$$
The following band move takes $C(6,-6)$ to a slice knot:
$$C(6,-6) \rightarrow C(6,-2,1,-4)=K_{9/2}$$
A single band move applied to $C(3,8)$ produces the  knot $K_{25/7}=8_9$:
$$C(3,8)\rightarrow C(3,3,-1,5)=K_{25/7} = 8_9$$
The knot $K_{25/7}$ is slice because $25/18=[2^{[2]},3,4]^-$ which appears in a slice 2-bridge knot family from Lemma~8.4 of  \cite{Lisca}.

The knots $C(4,n)$ and $C(m,\pm4)$ can be taken to the unknot with a single band move of the kind shown on the right in Figure~\ref{band moves}. For the second family, this was first found by Kearney~\cite{Kearney}.

The following band moves together with the equivalence $C(m,n)=C(n,m)$ of double twist knots demonstrate the final cases $|m-n|\in\{1,3,6,7\}$.
\begin{itemize}
\item $C(m+1,m) \rightarrow C(m+1,0,1,m)=C(m+2,m)$
\item $C(m+3,m) \rightarrow C(m+3,0,-1,m)=C(m+2,m)$
\item  $C(m,m+6)\rightarrow C(m,m+4,-1,2)=C(m,m+2)$
\item $C(m,m+7)\rightarrow C(m,5,-1,m+2)$
\end{itemize}

\noindent The knot $C(m+2,m)$ is slice by Theorem~\ref{double twist knots with gamma4 equal to 0}. The knot $C(m,5,-1,m+2)$ is slice because it has a subtractive continued fraction $[m+1, 2^{[2]},3,2^{[m-1]}]^-$ which appears in a slice 2-bridge knot family in Lemma~8.1 of \cite{Lisca}. Alternatively, the band move
$C(m,5,-1,m+2)\rightarrow C(m,2,-1,3,-1,m+2)=K_0$ results in the unlink, showing that the knot is slice.
\end{proof}

It is worth noting that given any knot $K$ which admits a band move to a slice knot, there may be more than one such band move. For example, the following band move takes $C(m,\pm 4)$ to the unknot:
 $$ C(m, \pm 4)\rightarrow  C(m,\pm 2,\mp 1,\pm 2)=K_{1/0},$$
 as does Kearney's move shown in Figure~\ref{band moves}. Kearney's move offers an alternate proof that $C(m,n)$ with $|m-n|=6$ bounds a M\"obius band because the distance between $m$ and $n$ can be changed from 6 to 2.


\section{Double twist knots with $\gamma_4 = 2$}\label{gamma_4 is 2}

In this section, we determine many infinite families of double twist knots with $\gamma_4= 2$. We begin by considering the obstructions of Jabuka and Kelly.

\begin{lemma}\label{lemma1}
Let $G_\epsilon$ be an $\epsilon$-definite matrix represented by the weighted graph below where $k>0$ and $\ell\geq 0$ and even. Denote the vertices of the graph by $\{e_i\}_{i=1}^{k+\ell+1}$. If there is an embedding
$$\phi:(\mathbb Z^{k+\ell+1},G_\epsilon) \rightarrow (\mathbb Z^r, \epsilon \mbox{\rm Id}),$$
then, up to a change of basis, we may assume that $\phi(e_i)=f_i - f_{i+1}$ for all $i \neq k+1$, where $\{f_i\}_{i=1}^{r}$ denote a basis for the codomain with $f_i \cdot f_j = \epsilon \delta_{ij}$. In particular, this implies that the codomain has dimension  $r\geq k +1$ if $\ell = 0$ and dimension $r\geq k + \ell + 2$ if $\ell>0$.

\begin{center}
   \includegraphics[width=6.5cm]{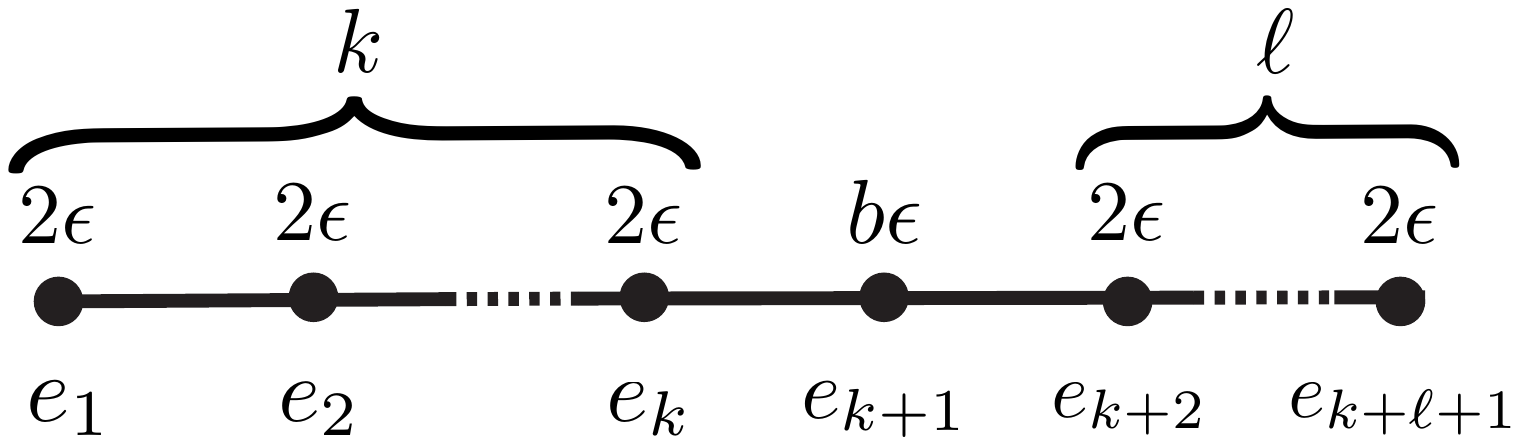}
\end{center} 
\end{lemma}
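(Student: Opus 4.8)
The plan is to treat this as a question about embeddings of a definite ``linear'' graph into the diagonal lattice $(\mathbb Z^r,\epsilon\,\mathrm{Id})$ and to exploit the sparsity that the diagonal form forces on the images of the square-$2\epsilon$ vertices. First I would record the local constraints. Writing $\phi(e_i)=\sum_s c_{i,s}f_s$ in the orthonormal basis $\{f_s\}$, each vertex $e_i$ with $i\neq k+1$ has $e_i\cdot e_i=2\epsilon$, so $\sum_s c_{i,s}^2=2$ and hence $\phi(e_i)=\pm f_{a_i}\pm f_{b_i}$ has exactly two nonzero coordinates, each equal to $\pm1$. The edge of weight $-\epsilon$ between consecutive vertices gives $\phi(e_i)\cdot\phi(e_{i+1})=-\epsilon$, forcing adjacent images to share exactly one basis vector with opposite signs there, while non-adjacent vertices are orthogonal and so share either no basis vector or two with canceling signs.

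Next I would normalize the first arm $e_1,\dots,e_k$. Since permutations and sign changes of $\{f_s\}$ are automorphisms of $\epsilon\,\mathrm{Id}$, I may set $\phi(e_1)=f_1-f_2$. Proceeding by induction, having arranged $\phi(e_i)=f_i-f_{i+1}$ for $i<j\le k$, the vertex $e_j$ shares one basis vector with $e_{j-1}=f_{j-1}-f_j$ and is orthogonal to $e_1,\dots,e_{j-2}$; if the shared vector were $f_{j-1}$, then $\phi(e_j)$ would carry a $-1$ at $f_{j-1}$ and orthogonality to $e_{j-2}=f_{j-2}-f_{j-1}$ would fail. Hence the shared vector is the free end $f_j$, and a relabeling and sign change of the unused basis vectors yields $\phi(e_j)=f_j-f_{j+1}$. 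This places the first arm on the $k+1$ basis vectors $f_1,\dots,f_{k+1}$.

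The heart of the argument is showing the second arm occupies basis vectors disjoint from $f_1,\dots,f_{k+1}$. In the graph the arms $\{e_1,\dots,e_k\}$ and $\{e_{k+2},\dots,e_{k+\ell+1}\}$ are separated by the distinguished vertex $e_{k+1}$, so there are no edges between them and every second-arm image $v=\phi(e_{k+1+j})$ satisfies $v\cdot\phi(e_i)=0$ for all $i\le k$. Combined with $\phi(e_i)=f_i-f_{i+1}$, this forces the coordinates of $v$ on $f_1,\dots,f_{k+1}$ to be all equal; since $v$ has only two nonzero coordinates, once $k\ge2$ this common value must vanish, so $v\in\langle f_{k+2},\dots,f_r\rangle$. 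Running the same inductive normalization on the second arm inside this complementary span puts it in the form $\phi(e_{k+1+j})=f_{k+1+j}-f_{k+2+j}$ on the $\ell+1$ fresh basis vectors $f_{k+2},\dots,f_{k+\ell+2}$. Counting the disjoint vectors used by the two arms then gives $r\ge(k+1)+(\ell+1)=k+\ell+2$ when $\ell>0$, and $r\ge k+1$ when $\ell=0$ and there is no second arm.

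The step I expect to be the main obstacle is making the disjointness argument airtight in the small cases where the sparsity bound does not immediately force the common coordinate to vanish, most notably $k=1$, where an orthogonal square-$2\epsilon$ vector could a priori be $\pm(f_1+f_2)$ and the second arm might try to ``fold back'' onto the first arm's basis vectors. Excluding this is where I expect the hypothesis that $\ell$ is even, together with the specific (larger) square of the distinguished vertex $e_{k+1}$, to enter, through a sign- and parity-tracking bookkeeping of the chain as it passes through $e_{k+1}$; I would also verify that the normalization of the second arm can be carried out without disturbing the already-fixed first arm. The remaining work, namely checking signs at each inductive step and confirming the orthogonality relations, I expect to be routine once the folding phenomenon is ruled out.
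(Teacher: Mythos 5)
Your overall strategy---use the square-$2$ sparsity to write each $\phi(e_i)$, $i\neq k+1$, as $\pm f_a\pm f_b$, normalize inductively along each arm, then show the two arms occupy disjoint sets of basis vectors---is the same as the paper's, but your inductive step for the first arm has a genuine gap. You claim that if $\phi(e_j)$ shares the vector $f_{j-1}$ with $\phi(e_{j-1})=f_{j-1}-f_j$, then orthogonality to $\phi(e_{j-2})=f_{j-2}-f_{j-1}$ fails. It does not: the ``folded'' vector $\phi(e_j)=-f_{j-2}-f_{j-1}$ satisfies $(-f_{j-2}-f_{j-1})\cdot(f_{j-2}-f_{j-1})=-1+1=0$ and $(-f_{j-2}-f_{j-1})\cdot(f_{j-1}-f_j)=-1$, so it meets every constraint you impose. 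For $j\ge 4$ this configuration is excluded by orthogonality to $e_{j-3}$ (which you did not invoke), but at $j=3$ it is consistent with \emph{all} relations among $e_1,e_2,e_3$: $\phi(e_3)=-f_1-f_2$ is orthogonal to $f_1-f_2$ and pairs to $-1$ with $f_2-f_3$. So the folding phenomenon you flag as a worry only for the second arm when $k=1$ in fact already occurs inside the first arm's own induction, at its third vertex, and your argument does not rule it out. The paper excludes it with a look-ahead argument: if $\phi(e_3)=-f_1-f_2$ and $\phi(e_4)=\sum_i a_if_i$, then $e_1\cdot e_4=0$ and $e_3\cdot e_4=-1$ force $a_1=a_2$ and $a_1+a_2=1$, which has no integer solution; here $e_4$ always exists because the distinguished vertex $e_{k+1}$ caps the first arm (the argument uses only the edge data, not the square of $e_4$).

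The same issue recurs at the third vertex of the second arm, and this is exactly where the evenness of $\ell$ enters---not through ``sign- and parity-tracking \dots as it passes through $e_{k+1}$'' as you guessed. Ruling out the folded image at $e_{k+4}$ requires the existence of a fourth second-arm vertex $e_{k+5}$, and that vertex exists precisely because $\ell>2$ together with $\ell$ even forces $\ell\ge 4$. Also, your ``main obstacle'' for $k=1$ is resolved more simply than you anticipate, with no parity input: if some second-arm vertex had image $\pm(f_1+f_2)$, it has a neighbor inside the second arm (since $\ell\ge 2$), and that neighbor's image is either $\pm(f_1+f_2)$ or supported on $f_3,\dots,f_r$, giving dot product $\pm2$ or $0$ rather than the required $-1$. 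In short, your proposal follows the paper's route but needs the look-ahead argument inserted at the third vertex of each arm before either induction closes; as written, the step ``hence the shared vector is the free end $f_j$'' is false at $j=3$.
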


\begin{proof}
We provide the proof for a positive definite matrix and leave the remaining case to the reader. The fact that $e_m \cdot e_m=2$ for all $m\neq k+1$ implies that $\phi(e_m)$ must be expressed as $\phi(e_m)=a_if_i+a_jf_j$ for some $i<j$ with  $|a_i|=|a_j|=1$ for all $m\neq k+1$. We wish to show that we can acheive $\phi(e_m) = f_m - f_{m+1}$ for $m \neq k+1$ by changing the basis elements of the codomain if necessary.

First, we consider $\phi(e_1)=a_if_i+a_jf_j$. By reordering and negating the basis elements of the codomain if necessary (which by abuse of notation, we will still continue to call $\{f_i\}_{i=1}^{r}$), we can assume $\phi(e_1)=f_1-f_2$, as desired.

Next consider $\phi(e_2)$ (assuming that $k\geq 2$), where $\phi(e_2)=a_if_i+a_jf_j$ for some $i<j$ and  $|a_i|=|a_j|=1$. Because $e_1\cdot e_2=-1$, it follows that $i \in \{1,2\}$ and $j >2$.   If $f_i = f_2$, then we have $\phi(e_2)=f_2+a_jf_j$ for some $j>2.$ On the other hand, if $f_i = f_1$, then we rename $f_1$ as $-f_2$ and $f_2$ as $-f_1$. We still have $\phi(e_1)=f_1-f_2$ but now $\phi(e_2)=f_2+a_jf_j$ for some $j>2$.  Now, if necessary, we may again change the basis  so that $f_j=\pm f_3$ and arrive at $\phi(e_2)=f_2-f_3$, as desired. 

Assuming $k\geq 3$, we next consider $\phi(e_3)$. Because $e_1\cdot e_3 = 0$ and $e_2\cdot e_3=-1$, it follows that either $\phi(e_3)=-f_1-f_2$ or $\phi(e_3) = f_3+a_jf_j$ with $j>3$. However, in the first case the embedding cannot be extended to $e_4$. To see this, suppose $\phi(e_4)=\Sigma_{i=1}^{r}a_if_i$.  Because $e_1\cdot e_4=0$ and $e_3 \cdot e_4=-1$, it follows that $a_1-a_2=0$ and $-a_1-a_2=-1$ which together have no integer solutions. Thus we must have  $\phi(e_3) = f_3+a_jf_j$ with $j>3$. Changing the basis of the codomain, if necessary, we may assume $\phi(e_i)=f_i-f_{i+1}$ for $1\le i \le 3$.

If $k>3$, we continue by induction. Assume that we can change the basis of the codomain, if necessary, so that $\phi(e_i)=f_i-f_{i+1}$ for $1 \le i\le m<k$ and that $m\ge 3$. Suppose that $\phi(e_{m+1})=\Sigma_{i=1}^{r}a_if_i$. Because $e_i \cdot e_{m+1}=0$ for $i< m$ and $e_m\cdot e_{m+1}=-1$, it follows that $a_1=a_2=\cdots=a_m=a_{m+1}-1$. We now have  $2=e_{m+1}\cdot e_{m+1}=\phi(e_{m+1})\cdot \phi(e_{m+1})\ge ma^2+(a+1)^2$ where $a=a_1=\cdots=a_m$. Because $m\ge 3$, we must have $a=0$ and hence $\phi(e_{m+1})=f_{m+1}+a_j f_j$ for some $j>m+1$. By changing the basis again, if necessary, we may assume $\phi(e_i)=f_i-f_{i+1}$ for $1 \le i\le m+1$, thus completing the inductive step.

If $\ell=0$, then the lemma is complete. If not, then $\ell \ge 2$. Consider any $e_i$ with $i>k+1$. Suppose $\phi(e_i)=\Sigma_{j=1}^{r}a_jf_j$. The dot product of this vector with $f_j-f_{j+1}$ is zero for all $1\le j\le k$. Hence $a_1=a_2=\cdots=a_{k+1}$. We now have
 $2=\phi(e_i)\cdot \phi(e_i)\ge (k+1)a^2,$
 where $a=a_1=\cdots=a_{k+1}$.
Hence, if $k=1$, then $a=0$ or $a=\pm1$ and so either $\phi(e_i)=\pm(f_1+f_2)$ or $\phi(e_i)=\Sigma_{j=k+2}^{r}a_jf_j$ and if $k>1$, then $a=0$ and $\phi(e_i)=\Sigma_{j=k+2}^{r}a_jf_j$.

If $k=1$, then none of the $e_i$ with $i>k+1$ can have image $\pm(f_1+f_2)$. For, suppose there were an index $i$ with $i>k+1$  and $\phi(e_i) =\pm(f_1+f_2)$. Let $j=i\pm1$ such that  $k+1<j\leq k+\ell+1$. Such a $j$ exists because $\ell \ge2$. Now  the dot product of $\phi(e_i)$ and $\phi(e_{j})$  would  either be $\pm 2$ (if $\phi(e_{j})$ also had image $\pm(f_1+f_2)$) or zero (if $\phi(e_{j})$ had image $\Sigma_{j=k+2}^{r}a_jf_j$). But this contradicts the fact that $\phi(e_i)\cdot \phi(e_{j})=-1$.

So, regardless of the value of $k$,  if $i>k+1$, then $\phi(e_i)=\Sigma_{j=k+2}^{r}a_jf_j$ where the coefficients $a_j$ depend on $i$. Thus we may change the basis of the codomain, if necessary, so that we still have $\phi(e_i)=f_i-f_{i+1}$ for $1\le i\le k$ and now also have $\phi(e_{k+2})=f_{k+2}-f_{k+3}$. We now proceed by induction similar to before. Note that to start the induction when $\ell>2$, we must prove the first three cases of the inductive hypothesis and the proof of the third case depends on the existence of $e_{k+5}$, which we know exists because $\ell$ is even.
\end{proof}

We now proceed to use Lemma~\ref{lemma1} to establish the following results regarding embedding the Goeritz matrices of $C(m,n)$.

\begin{proposition}\label{m,n>0 phi embedding conditions}
Let $m>1$, $n>0$ and even, and  $G_\epsilon$  be the $\epsilon$-definite Goeritz matrix of $C(m,n)$ as given in Proposition~\ref{Goeritz matrices}. There is an embedding $\phi$ as given in Theorem~\ref{JK theorems} \eqref{phi I plus embedding}   if and only if there exist integers $x$ and $y$ such that
\begin{equation*}\label{xy plus condition}m=n x^2+2x+y^2.\end{equation*}
There is an embedding $\phi$ as given in Theorem~\ref{JK theorems} \eqref{phi I minus embedding} if and only if there exist integers $x$ and $y$ such that
\begin{equation*}\label{xy minus condition}n=m x^2+2x+y^2.\end{equation*}
\end{proposition}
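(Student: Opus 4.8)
The plan is to apply Lemma~\ref{lemma1} to each Goeritz matrix from Proposition~\ref{Goeritz matrices} and reduce the existence of the embedding $\phi$ to a single Diophantine condition. Consider first the positive case. The matrix $G_+$ in Proposition~\ref{Goeritz matrices} is $\epsilon=1$ definite, of size $n\times n$, and its top-left $(n-1)\times(n-1)$ block is $T_{n-1}$, which is exactly the linear chain of vertices of weight $2$ described in Lemma~\ref{lemma1}. The embedding we must analyze is $\phi$ from Theorem~\ref{JK theorems}\eqref{phi I plus embedding}, namely an embedding of $(\mathbb{Z}^{n+1}, G_+\oplus[\det(K)])$ into $(\mathbb{Z}^{n+1},\mathrm{Id})$. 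Here $\det(K)=|mn+1|=mn+1$ since $m,n>0$. So I would set up the combined matrix $G_+\oplus[mn+1]$, identify its chain portion (the $T_{n-1}$ block), and apply Lemma~\ref{lemma1} with $k=n-1$ and $\ell=0$ to normalize the images of the chain vertices as $\phi(e_i)=f_i-f_{i+1}$ for $1\le i\le n-1$.

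First I would use Lemma~\ref{lemma1} to fix $\phi(e_i)=f_i-f_{i+1}$ for the first $n-1$ basis vectors, which forces the codomain to contain $f_1,\dots,f_n$. Then I would solve for the images of the two remaining generators: the vertex $e_n$ (corresponding to the last row/column of $G_+$, with diagonal entry $m+1$ and a single off-diagonal $-1$ linking it to $e_{n-1}$) and the extra generator $e_{n+1}$ coming from the summand $[mn+1]$. Writing $\phi(e_n)=\sum a_i f_i$ and imposing the constraints $e_n\cdot e_i=0$ for $i<n-1$, $e_n\cdot e_{n-1}=-1$, and $e_n\cdot e_n=m+1$ should pin down most coefficients: the orthogonality to the chain forces the first $n-1$ coefficients to be equal, the linking condition fixes their common value, and the self-intersection $m+1$ then constrains the remaining ``free'' coefficients on the $f_j$ with $j\ge n$. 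A parallel computation for $\phi(e_{n+1})$, using that it is orthogonal to $e_1,\dots,e_n$ and has self-intersection $mn+1$, together with the bookkeeping of how the free coefficients of $\phi(e_n)$ and $\phi(e_{n+1})$ interact, should collapse everything to the equation $m=nx^2+2x+y^2$, where $x$ and $y$ are two integer parameters that survive the normalization (roughly, $x$ records the coefficient of the overflow direction appearing in $\phi(e_n)$ and $y$ the residual). The negative case is entirely symmetric: $G_-$ has the chain block $-T_{m-1}$, so Lemma~\ref{lemma1} applies with $k=m-1$, and the roles of $m$ and $n$ are interchanged, yielding $n=mx^2+2x+y^2$. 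I would either rerun the computation with signs reversed or invoke the symmetry $C(m,n)\leftrightarrow C(n,m)$ noted in the introduction.

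For the converse direction, given integers $x,y$ with $m=nx^2+2x+y^2$ (respectively $n=mx^2+2x+y^2$), I would simply write down an explicit embedding by reversing the analysis: define $\phi$ on the chain by $f_i-f_{i+1}$ and define the images of the last two generators using $x$ and $y$ as the coefficients of the overflow basis vectors, then verify directly that all inner products match the entries of $G_\epsilon\oplus[\epsilon\det(K)]$. This is a routine check once the forward direction has revealed the correct form of the images.

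The main obstacle I anticipate is the linear-algebra bookkeeping in extracting precisely the two integer parameters $x,y$ and confirming that the self-intersection and cross-intersection constraints on $\phi(e_n)$ and $\phi(e_{n+1})$ combine to give exactly $m=nx^2+2x+y^2$ and not some variant. In particular, one must correctly account for the shared codomain dimension: after the chain uses up $f_1,\dots,f_n$, there may be only one additional basis vector available (since the codomain has rank $n+1$), so the images of $e_n$ and $e_{n+1}$ must share the overflow directions, and it is this sharing that produces the cross term $2x$ rather than two independent squares. Getting the sign of that linear term right, and verifying that the $\det(K)=mn+1$ factor on the extra generator forces the coefficient structure that yields $nx^2$, is the delicate point; the rest is mechanical verification.
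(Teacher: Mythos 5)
Your proposal is correct and follows essentially the same route as the paper: normalize the chain via Lemma~\ref{lemma1}, read off the Diophantine condition from the intersection constraints, and prove the converse by writing down the explicit embedding (with $\phi(e_{n+1})=yf_1+\cdots+yf_n-(nx+1)f_{n+1}$ playing the role of your shared ``overflow'' vector). One small simplification you will find when carrying it out: the equation $m=nx^2+2x+y^2$ already falls out of the constraints on $\phi(e_n)$ alone (orthogonality to the chain, $e_{n-1}\cdot e_n=-1$, and $e_n\cdot e_n=m+1$, which give $(n-1)x^2+(x+1)^2+y^2=m+1$), so in the forward direction no bookkeeping of the interaction with $\phi(e_{n+1})$ is needed at all.
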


\begin{proof}  We give the proof in the case of $G_+$. The reader can easily make the necessary changes for the case $G_{-}$.

Suppose $\phi$ exists. Let $\{e_i\}_{i=1}^{n+1}$  be a basis for the domain and $\{f_i\}_{i=1}^{n+1}$ be a basis for the codomain such that the pairing on $\{e_i\}_{i=1}^{n+1}$ is given by $G_+\oplus [mn+1]$, which we denote graphically by:\\

\begin{center}
   \includegraphics[width=6.5cm]{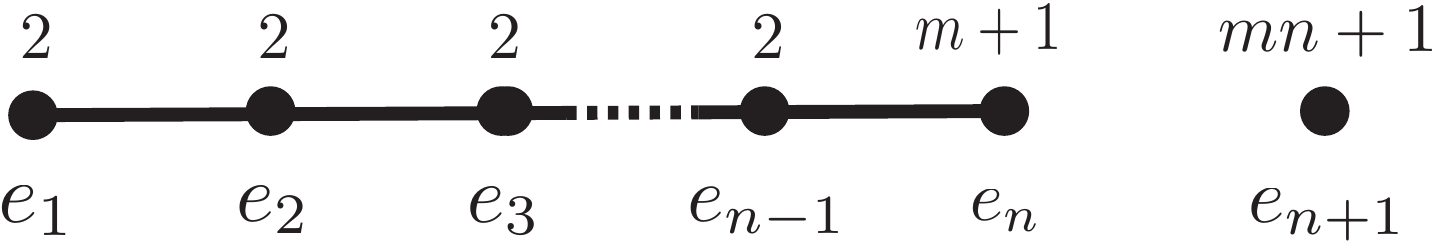}
\end{center}
Moreover, the pairing on $\{f_i\}_{i=1}^{n+1}$ is given by  $f_i\cdot f_j=\delta_{ij}$.\\

By Lemma~\ref{lemma1}, we may assume that  $ \phi(e_i)=f_i-f_{i+1}\text{ for } 1\le i \le n-1.$  We now determine $\phi(e_n)$. Suppose that $\phi(e_n)=\sum_{i=1}^{n+1} a_i f_i$. Because $e_i \cdot e_n=a_i-a_{i+1} = 0$ for $1\le i\le n-2$, it follows that
$$a_1=a_2=\cdots=a_{n-1}$$ and we rename $a_1=x$.  
Moreover because $e_{n-1} \cdot e_n=x-a_{n}= -1$, it follows that $a_{n} = x+1.$ And because $m+1=e_{n} \cdot e_n= (n-1)x^2 + (x+1)^2 +a_{n+1}^2$, we now have that $m+1=(n-1)x^2+(x+1)^2+a_{n+1}^2$, or equivalently, $m=nx^2+2x+y^2$ as desired, where we have renamed $a_{n+1}$ as $y$. 

 We now consider the converse. Suppose there exist integers $x,y$ such that $m=nx^2+2x+y^2$. Then we define:
 \begin{align*}
      \phi(e_i) &= f_i - f_{i+1} \text{ for } 1\le i\le n-1\\
      \phi(e_n) &=xf_1 + xf_2 + \cdots + xf_{n-1}+ (x+1)f_n + yf_{n+1}\\
      \phi(e_{n+1}) &= yf_1 + yf_2 + \cdots + yf_{n-1} + yf_n -(nx+1)f_{n+1}.
 \end{align*}
It remains to check that $e_{n+1}\cdot e_{n+1}=\phi(e_{n+1})\cdot \phi(e_{n+1})$.
Note that 
\begin{align*}
    \det K&=mn+1\\
    &=n^2x^2+2nx+ny^2+1\\
    &=ny^2+(nx+1)^2\\
    &=\phi(e_{n+1})\cdot \phi(e_{n+1}).
\end{align*}Thus $\phi$ respects the linking form. Because $G_+ \oplus[\det K]$ is positive definite, $\phi$ is an embedding.
\end{proof}

Given positive integers $m$ and $n$, it is easy to check if there exist integers $x$ and $y$ with $m=n x^2+2x+y^2$. One need only check for each integer $y$ with $0\le y\le \sqrt m$ if the equation  has integer solutions in $x$. 

Now we consider double twist knots $C(m,-n)$ with $m,n>1$ and $n$ even. Recall that the determinant of $C(m,-n)$ is $mn-1$. 

\begin{proposition}\label{m,-n>0 phi embedding conditions}
Let $m>1$, $n>0$ and even, and  $\Gamma_+$  be the positive-definite Goeritz matrix of $C(m,-n)$ as given in Proposition~\ref{Goeritz matrices}. Then there is an embedding $\phi$ as given in Theorem~\ref{JK theorems} \eqref{phi I plus embedding} if and only if there is an integral solution to the following system of equations
\begin{align}
\label{short XXT equations}
m &= x_{11}^2+x_{12}^2+x_{13}^2 \nonumber\\ 
n &=  x_{21}^2+x_{22}^2+x_{23}^2 \\\nonumber
-1& =x_{11} x_{21}+ x_{12} x_{22}+x_{13} x_{23}.
\end{align}
\end{proposition}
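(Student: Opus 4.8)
The plan is to recognize that the three displayed equations say exactly that the Gram matrix of the first two image vectors equals $\Gamma_+$, and then to manufacture the missing third image vector as a cross product. Let $\{e_1,e_2,e_3\}$ be a basis for the domain whose pairing is $\Gamma_+\oplus[mn-1]$, so that $e_1\cdot e_1=m$, $e_2\cdot e_2=n$, $e_1\cdot e_2=-1$, $e_3\cdot e_3=mn-1$, and $e_3$ is orthogonal to $e_1$ and $e_2$; let $\{f_1,f_2,f_3\}$ be the standard orthonormal basis of the codomain $(\mathbb Z^3,\mathrm{Id})$, noting that both lattices have rank $3$ since $\Gamma_+$ is $2\times2$. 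For the forward direction, if an embedding $\phi$ exists I would write $\phi(e_1)=x_{11}f_1+x_{12}f_2+x_{13}f_3$ and $\phi(e_2)=x_{21}f_1+x_{22}f_2+x_{23}f_3$ with $x_{ij}\in\mathbb Z$; then the requirements $\phi(e_1)\cdot\phi(e_1)=m$, $\phi(e_2)\cdot\phi(e_2)=n$, and $\phi(e_1)\cdot\phi(e_2)=-1$ are precisely the three equations, so the $x_{ij}$ solve the system. Equivalently, the system asserts $XX^{T}=\Gamma_+$ for the $2\times3$ matrix $X=(x_{ij})$, which explains the name of the proposition.

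For the reverse direction, suppose an integral solution is given. I would define $\phi(e_1)$ and $\phi(e_2)$ by the formulas above and set $\phi(e_3)=\phi(e_1)\times\phi(e_2)$, the cross product in $\mathbb Z^3$, which is again an integer vector. By construction $\phi(e_3)$ is orthogonal to both $\phi(e_1)$ and $\phi(e_2)$, and Lagrange's identity gives
$$\phi(e_3)\cdot\phi(e_3)=\big(\phi(e_1)\cdot\phi(e_1)\big)\big(\phi(e_2)\cdot\phi(e_2)\big)-\big(\phi(e_1)\cdot\phi(e_2)\big)^2=mn-(-1)^2=mn-1.$$
Hence the Gram matrix of $\phi(e_1),\phi(e_2),\phi(e_3)$ is exactly $\Gamma_+\oplus[mn-1]$, so the $\mathbb Z$-linear extension $\phi$ preserves the pairing. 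Since $m,n>1$ forces $mn-1>0$ and $\Gamma_+$ is positive definite, the form $\Gamma_+\oplus[mn-1]$ is nondegenerate, whence $\phi$ is injective and therefore an embedding.

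The one idea that makes the argument work is that the third image vector is completely forced: it must be orthogonal to the first two and have square length equal to the $2\times2$ Gram determinant $\det\Gamma_+=mn-1$, and the cross product supplies exactly such a vector with no further case analysis. Consequently there is no real obstacle of the inductive type encountered in Lemma~\ref{lemma1}, precisely because the ambient rank here is only $3$. The only points I expect to require a little care are the routine checks that the cross product of integer vectors stays in $\mathbb Z^3$ and that it is positive-definiteness (which is exactly where the hypotheses $m,n>1$ enter) that upgrades the form-preserving linear map to an honest embedding.
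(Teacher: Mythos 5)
Your proposal is correct and follows essentially the same path as the paper's proof: both directions are handled identically, including the key idea of defining $\phi(e_3)=\phi(e_1)\times\phi(e_2)$ in the converse. The only cosmetic difference is that you invoke Lagrange's identity to compute $\phi(e_3)\cdot\phi(e_3)=mn-1$, whereas the paper derives the same value via the angle formula $|\phi(e_1)\times\phi(e_2)|^2=mn\sin^2\theta$; these are the same computation.
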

\begin{proof}
Let $\{e_1, e_2, e_3\}$  be a basis for $\mathbb{Z}^{3}$ with pairing given by $\Gamma_+\oplus [mn-1]$ as depicted below and $\{f_1, f_2, f_3\}$ be the standard basis for $\mathbb{Z}^{3}$ with $f_i\cdot f_j=\delta_{ij}$. 
\begin{center}
   \includegraphics[width=2.5cm]{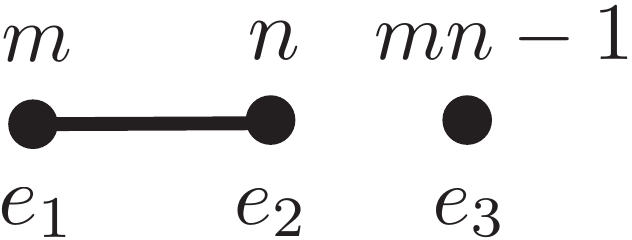}
\end{center}
If the embedding $\phi$ exists, then $\phi(e_i)=\Sigma_{j=1}^3 x_{ij}f_j$ for some integers  $x_{ij}$ such that
\begin{align*}
m &= e_1 \cdot e_1 = \phi(e_1) \cdot \phi(e_1) =  x_{11}^2+x_{12}^2+x_{13}^2 \\ 
n &= e_2 \cdot e_2 = \phi(e_2) \cdot \phi(e_2) =  x_{21}^2+x_{22}^2+x_{23}^2 \\
-1& = e_1 \cdot e_2 = \phi(e_1) \cdot \phi(e_2) = x_{11} x_{21}+ x_{12} x_{22}+x_{13} x_{23}
\end{align*}
This gives an integral solution to Equations~\ref{short XXT equations}. Conversely, if there is an integral solution to Equations~\ref{short XXT equations}, then define $\phi(e_1) =\Sigma_{j=1}^3 x_{1j}f_j$, $\phi(e_2) =\Sigma_{j=1}^3 x_{2j}f_j$, and $\phi(e_3) = \phi(e_1) \times \phi(e_2)$ (which is an integer valued vector). Then $\phi(e_1) \cdot \phi(e_1) =m$, $\phi(e_2) \cdot \phi(e_2) =n$, and $\phi(e_1) \cdot \phi(e_2) =-1$. Clearly,  $\phi(e_1) \cdot \phi(e_3) = \phi(e_2) \cdot \phi(e_3) =0$ by properties of the cross product. Moreover, if $\theta$ is the angle between $\phi(e_1)$ and $\phi(e_2)$, then 
$1=(\phi(e_1) \cdot \phi(e_2))^2 = m n \cos^2 \theta$. Thus,
$$\phi(e_3) \cdot \phi(e_3) = \left| \phi(e_1) \times \phi(e_2) \right|^2 = m n \sin^2 \theta = mn \left(1- \frac{1}{mn} \right) = mn-1.$$
Therefore, $\phi$ is an embedding because it preserves the pairing given by $\Gamma_+\oplus [mn-1]$ and $\Gamma_+\oplus [mn-1]$ is positive definite. 
\end{proof}

The following technical lemma is necessary for the final obstruction theorem.

\begin{lemma}\label{lemma2}
Let $G$ be a negative definite  matrix represented by the weighted graph below where $k>0$ and $\ell\geq 0$ is even. There is an embedding
$$\phi:(\mathbb Z^{k+\ell+1},G) \rightarrow (\mathbb Z^{k+\ell+2}, -\mbox{\rm Id}),$$ if and only if $(k,\ell)=(3,0)$, $k=2$, or $\ell = 2$.
\begin{center}
   \includegraphics[width=6.5cm]{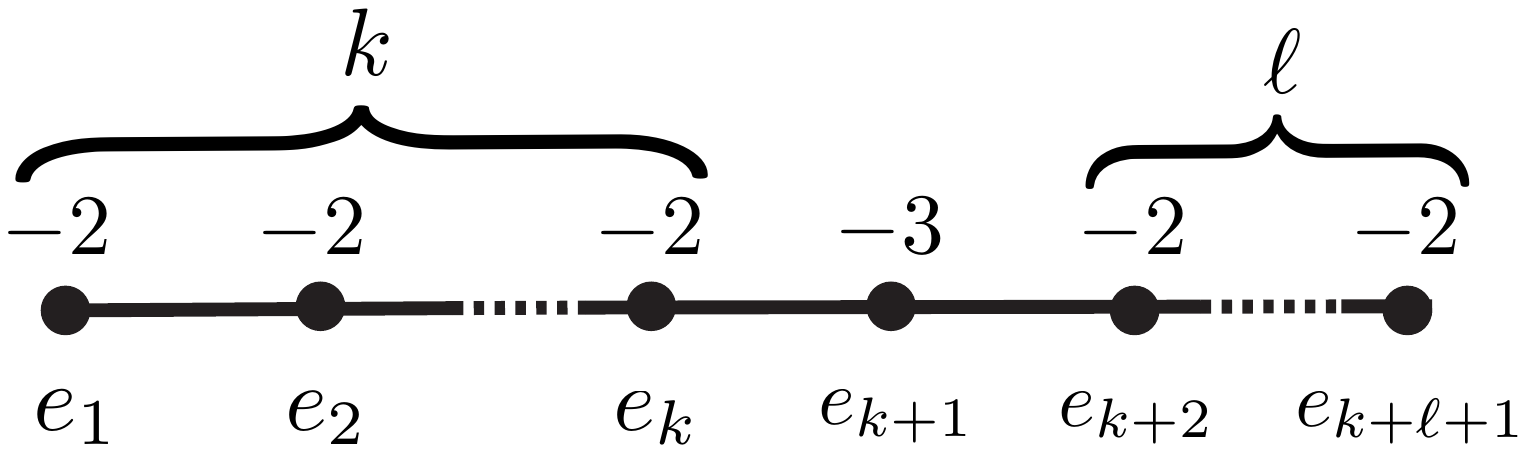}
\end{center}
\end{lemma}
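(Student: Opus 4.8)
The plan is to peel off the two norm-$2$ chains using Lemma~\ref{lemma1}, reducing the whole question to whether a single norm-$3$ vector can be placed, and then to convert that placement into an elementary Diophantine equation. Since a map $\phi$ embeds $(\mathbb{Z}^{k+\ell+1},G)$ into $(\mathbb{Z}^{k+\ell+2},-\mathrm{Id})$ exactly when it embeds $(\mathbb{Z}^{k+\ell+1},-G)$ into $(\mathbb{Z}^{k+\ell+2},\mathrm{Id})$, I would first pass to the positive-definite matrix $H=-G$ and the standard cubic lattice. Reading the weighted graph, $H$ is a norm-$2$ chain $e_1,\dots,e_k$ (an $A_k$), then a norm-$3$ vertex $e_{k+1}$ meeting each of its neighbors with pairing $-1$, then a second norm-$2$ chain $e_{k+2},\dots,e_{k+\ell+1}$ (an $A_\ell$), with $e_{k+1}$ joining the two chains.

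First I would apply Lemma~\ref{lemma1} (with $\epsilon=1$; note the hypothesis that $\ell$ is even, which is exactly what lets that lemma normalize the second chain). After a change of orthonormal basis $\{f_j\}$ we may assume $\phi(e_i)=f_i-f_{i+1}$ for all $i\neq k+1$, so the first chain occupies $f_1,\dots,f_{k+1}$ and the second occupies $f_{k+2},\dots,f_{k+\ell+2}$. This is the point at which $\ell$ matters: when $\ell>0$ the two chains exhaust all $k+\ell+2$ basis vectors, whereas when $\ell=0$ exactly one vector $f_{k+2}$ is left free.

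Next I would determine $\phi(e_{k+1})=\sum_j c_jf_j$ from its pairings. Orthogonality to $e_1,\dots,e_{k-1}$ forces $c_1=\cdots=c_k=:a$; the condition $\phi(e_{k+1})\cdot\phi(e_k)=-1$ gives $c_{k+1}=a+1$; and when $\ell>0$ the analogous conditions on the second chain give $c_{k+2}=d-1$ and $c_{k+3}=\cdots=c_{k+\ell+2}=:d$, while when $\ell=0$ the coordinate $c_{k+2}=:b$ along the spare vector is unconstrained. The remaining requirement $\phi(e_{k+1})\cdot\phi(e_{k+1})=3$ then becomes
$$(k+1)a^2+2a+b^2=2 \qquad(\ell=0)$$
or
$$(k+1)a^2+2a+(\ell+1)d^2-2d=1 \qquad(\ell>0).$$
An embedding exists precisely when the relevant equation has an integer solution, and reading the displayed coordinates off of such a solution produces the embedding, so the converse direction comes for free.

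Finally I would solve these equations. The key elementary fact is that both $P(a)=(k+1)a^2+2a$ and $Q(d)=(\ell+1)d^2-2d$ are nonnegative on the integers: the vertex of $P$ lies strictly between $-1$ and $0$ and the vertex of $Q$ strictly between $0$ and $1$, and $P,Q$ are nonnegative at all four of these integers. Hence in the $\ell>0$ equation one of $P(a),Q(d)$ is $0$ and the other is $1$; a one-line check shows $P(a)=1$ forces $k=2$ and $Q(d)=1$ forces $\ell=2$, yielding the families $k=2$ (via $a=-1,d=0$, valid for every $\ell$) and $\ell=2$ (via $a=0,d=1$, valid for every $k$). For $\ell=0$, setting $a=-1$ leaves $b^2=3-k$, a perfect square only for $k=3$ ($b=0$) and $k=2$ ($b=\pm1$), while $a=0$ and $|a|\ge2$ are excluded by size; this gives exactly $(k,\ell)=(3,0)$ together with the $k=2$ case. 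I expect the only real friction to be the bookkeeping that separates the regimes $\ell=0$ and $\ell>0$---in particular correctly accounting for the single spare basis vector when $\ell=0$---rather than any genuine difficulty, since once $\phi(e_{k+1})$ is pinned down the problem is purely the nonnegativity argument above.
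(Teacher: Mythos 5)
Your proof is correct and takes essentially the same route as the paper's: both apply Lemma~\ref{lemma1} to normalize the two chains to $f_i-f_{i+1}$ and then reduce the placement of the norm-$3$ vertex to the very same quadratic Diophantine equation (the paper's equation $3=ka^2+(a+1)^2+(b-1)^2+\ell b^2$ is exactly your $P(a)+Q(d)=1$, resp.\ $P(a)+b^2=2$, after expansion). The only difference is packaging: the paper writes down the three embeddings explicitly and uses the equation solely for nonexistence, whereas you read both directions off the equation and solve it completely via the nonnegativity of $P$ and $Q$ --- a slightly tidier organization of the same argument.
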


\begin{proof} We first show the existence of embeddings when $(k,\ell)=(3,0)$, $k=2$, or $\ell = 2$. If $(k,\ell)=(3,0)$, then the following defines an embedding $\phi: \mathbb{Z}^4 \rightarrow \mathbb{Z}^5$.
$$\phi(e_i)=\left\{\begin{array}{ll}
f_i- f_{i+1}, &\textrm{ if } i < 4\\
 -f_1-f_2-f_3, &\textrm{ if } i = 4\\
\end{array}
\right.
$$
If $k=2$ and $\ell$ is any nonnegative integer, the following defines an embedding $\phi: \mathbb Z^{\ell+3} \to \mathbb Z^{\ell+4}$. 
$$\phi(e_i)=\left\{\begin{array}{ll}
f_i- f_{i+1}, &\textrm{ if } i \neq 3\\
-f_1 - f_2 - f_4, &\textrm{ if } i = 3\\
\end{array}
\right.
$$
Finally, if $\ell=2$ and $k$ is any positive integer, the following defines an embedding $\phi: \mathbb Z^{k+3}\to~\mathbb Z^{k+4}$. 
$$\phi(e_i)=\left\{\begin{array}{ll}
f_i- f_{i+1}, &\textrm{ if } i \neq k+1\\
f_{k+1} +f_{k+3} +f_{k+4}, &\textrm{ if } i = k+1\\
\end{array}
\right.
$$

We now show that embeddings do not exist in all other cases.
Suppose $\phi$ is such an embedding.  By Lemma~\ref{lemma1}, we may assume that $\phi(e_i) = f_i-f_{i+1}$ for all $i \neq k+1$. Now suppose  $\phi(e_{k+1})=\Sigma_{j=1}^{k+\ell+2}a_j f_j$.  It follows that
$a_1=a_2=\dots=a_k=a_{k+1}-1$,   and $a_{k+2}+1=a_{k+3}=a_{k+4}=\dots=a_{k+\ell+2}$. We now have
\begin{equation}\label{nophi} -3=\phi(e_{k+1})\cdot \phi(e_{k+1})=-ka^2-(a+1)^2-(b-1)^2-\ell b^2,\end{equation}
where $a=a_1=a_2=\dots=a_k$ and $b=a_{k+3}=a_{k+4}=\dots=a_{k+\ell+2}$. Therefore, if $k\ge4$, then $a=0$ and if $\ell \ge 4$, then $b=0$. If both $k\ge4$ and $\ell \ge 4$, then $a=b=0$ and Equation \eqref{nophi} gives the contradiction $3=2$. In the remaining cases, it is easy to verify that Equation~\eqref{nophi} has no integral solutions in $a$ and $b$. Therefore, $\phi$ cannot exist.
\end{proof}

\begin{proposition}\label{negdefembedding-n-negative}
Let $m>1$, $n>0$ and even, and  $\Gamma_{-}$  be the negative definite Goeritz matrix of $C(m,-n)$, as given in Proposition~\ref{Goeritz matrices}.  Then there is an embedding $\phi$ as given in Theorem~\ref{JK theorems} \eqref{phi I minus embedding} if and only if $(m,-n)=(5,-2)$ or at least one of $m$ or $n$ is 4. 
\end{proposition}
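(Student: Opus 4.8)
The plan is to reduce the equal-rank embedding question posed in Theorem~\ref{JK theorems}~\eqref{phi I minus embedding} to the codimension-one embedding problem already solved in Lemma~\ref{lemma2}, and then read off the answer. First I would record the shape of the data. By Proposition~\ref{Goeritz matrices}, the matrix $\Gamma_-$ of $C(m,-n)$ is exactly the negative-definite matrix of Lemma~\ref{lemma2} with $k=m-2$ and $\ell=n-2$: two chains of $-2$ vertices of lengths $m-2$ and $n-2$ meeting at a central vertex of weight $-3$. Note that $\ell=n-2$ is even, that $\mathrm{rank}(\Gamma_-)=m+n-3=k+\ell+1$, and that $\det C(m,-n)=mn-1$. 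Consequently the embedding $\phi$ appearing in Theorem~\ref{JK theorems}~\eqref{phi I minus embedding} takes the form
$$\phi:(\mathbb Z^{k+\ell+2},\ \Gamma_-\oplus[-(mn-1)])\to(\mathbb Z^{k+\ell+2},-\mathrm{Id}),$$
which has the same rank $k+\ell+2$ on both sides, whereas Lemma~\ref{lemma2} concerns an embedding of $\Gamma_-$ alone into the one-higher-rank lattice $(\mathbb Z^{k+\ell+2},-\mathrm{Id})$.

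The central claim is that these two embedding problems are equivalent: $\Gamma_-\oplus[-(mn-1)]$ embeds in $-\mathrm{Id}_{k+\ell+2}$ if and only if $\Gamma_-$ embeds in $-\mathrm{Id}_{k+\ell+2}$. One direction is immediate: restrict a given $\phi$ to the $\Gamma_-$ summand to obtain a Lemma~\ref{lemma2} embedding. For the converse, suppose $\psi:(\mathbb Z^{k+\ell+1},\Gamma_-)\hookrightarrow(\mathbb Z^{k+\ell+2},-\mathrm{Id})$ is an embedding with image $L$. Its orthogonal complement $L^\perp$ has rank one; let $v$ generate it, with $v\cdot v=-s$. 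Sending a new basis vector to $v$ already yields an embedding of $\Gamma_-\oplus[-s]$. A discriminant computation then pins down $s$ up to squares: passing to the primitive closure $\bar L=(L\otimes\mathbb Q)\cap\mathbb Z^{k+\ell+2}$, the standard fact that complementary primitive sublattices of a unimodular lattice have equal discriminants gives $|\det\bar L|=|\det L^\perp|=s$, while $|\det\Gamma_-|=mn-1=s\,[\bar L:L]^2$. Writing $j=[\bar L:L]$, we obtain $mn-1=sj^2$, so Lemma~\ref{need only check for ell equal det K} upgrades the embedding of $\Gamma_-\oplus[-s]$ to one of $\Gamma_-\oplus[-sj^2]=\Gamma_-\oplus[-(mn-1)]$, as desired. (Equivalently, $jv$ has square $-(mn-1)$ and is orthogonal to $L$, so it serves directly as the image of the extra generator.)

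With the equivalence in hand I would simply invoke Lemma~\ref{lemma2}. For $m\ge 3$ we have $k=m-2>0$ and $\ell=n-2\ge 0$ even, so the embedding exists precisely when $(k,\ell)=(3,0)$, $k=2$, or $\ell=2$; translating back, these say $(m,n)=(5,2)$ (that is, $C(m,-n)=C(5,-2)$), $m=4$, or $n=4$, which is exactly the claimed list. The only uncovered case is $m=2$ (so $k=0$). Here the left chain is empty, and reading the graph from the opposite end exhibits the same lattice $\Gamma_-$ as the Lemma~\ref{lemma2} graph with the roles of the two chains swapped, namely with $k'=n-2$ and $\ell'=0$ (both admissible, since $n-2>0$ for $n\ge 4$ and $\ell'=0$ is even). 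Lemma~\ref{lemma2} with $\ell'=0$ gives an embedding exactly when $k'\in\{2,3\}$, i.e. $n\in\{4,5\}$; as $n$ is even this forces $n=4$, again matching the list. The remaining degenerate value $n=2$ (with $m=2$) leaves only the single vertex $[-3]$, which does not embed in $-\mathrm{Id}_2$ because $x^2+y^2=3$ has no integer solutions, consistent with the statement.

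The main obstacle is the converse half of the equivalence: one must guarantee that the orthogonal complement of the image of $\Gamma_-$ carries a vector of norm exactly $-(mn-1)$, and this is where the determinant and primitive-closure bookkeeping, together with Lemma~\ref{need only check for ell equal det K}, does the real work. Everything after that is a direct appeal to the casework already completed in Lemma~\ref{lemma2}, with only the boundary value $m=2$ needing the small extra observation about re-reading the graph from the other end.
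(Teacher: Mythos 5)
Your proof is correct, and it reaches the same case list through Lemma~\ref{lemma2}, but the way you pass between the two embedding problems is genuinely different from the paper. The paper handles the ``if'' direction concretely: in each of the three cases it extends the explicit Lemma~\ref{lemma2} embedding by writing down a specific image for the last generator $e_{m+n-2}$, namely $3f_5$ when $(m,-n)=(5,-2)$, $f_1+f_2+f_3-2f_4-\cdots-2f_{n+2}$ when $m=4$, and $-2f_1-\cdots-2f_{m-1}+f_m+f_{m+1}+f_{m+2}$ when $n=4$, and then checks squares and orthogonality. You instead prove once and for all that an embedding of $\Gamma_-$ into $(\mathbb Z^{m+n-2},-\mathrm{Id})$ automatically upgrades to an embedding of $\Gamma_-\oplus[-(mn-1)]$: the rank-one orthogonal complement is generated by some $v$ with $v\cdot v=-s$, unimodularity of the ambient lattice forces $mn-1=sj^2$ where $j$ is the index of the image in its primitive closure, and then $jv$ (equivalently Lemma~\ref{need only check for ell equal det K}) supplies the missing generator. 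This is more conceptual, avoids all case-by-case verification, and explains structurally why the extension can never fail; the paper's route is more elementary and self-contained, but works only because the three explicit vectors happen to exist. A second difference worth noting: Lemma~\ref{lemma2} assumes $k>0$, so the paper's one-line appeal to it silently skips $m=2$ (where $k=m-2=0$); you handle that boundary case explicitly by re-reading the graph as $(k',\ell')=(n-2,0)$ and treating $(m,n)=(2,2)$ by hand, so your write-up is actually more careful on this point.
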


\begin{proof}
We are looking for an embedding $\phi: (\mathbb Z^{m+n-2}, \Gamma_- \oplus [-(mn-1)]) \to (\mathbb Z^{m+n-2}, -Id)$ where $ \Gamma_- \oplus [-(mn-1)]$ is given by
\begin{center}
   \includegraphics[width=8.5cm]{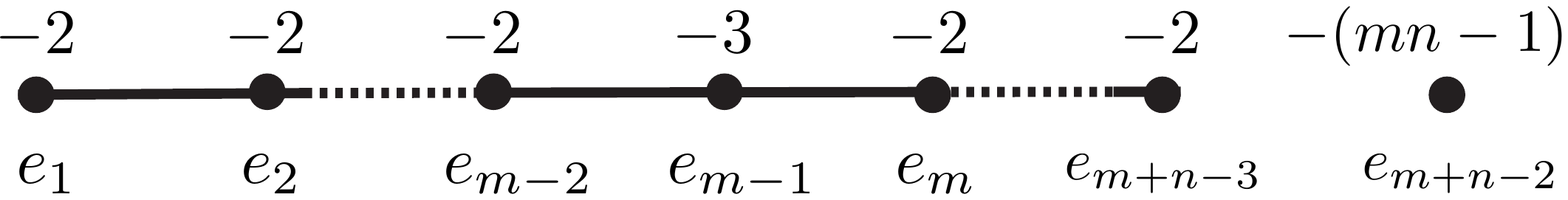}
\end{center}
By Lemma~\ref{lemma2}, we must have one of three possible cases:
\begin{enumerate}
\item $(m,-n)=(5,-2)$
\item $m=4$
\item $n=4$
\end{enumerate}
It remains to show that in each of these cases, the embeddings given in Lemma~\ref{lemma2} can be extended to $e_{m+n-2}$. The reader can check that  this can be done for each case, respectively, by defining $\phi(e_{m+n-2})$ as follows:
\begin{enumerate}
\item $3f_5$
\item $f_1+f_2+f_3 -2f_4-2f_5-\cdots -2f_{n+2}$
\item $-2f_1-2f_2-\dots -2f_{m-1}+f_m+f_{m+1}+f_{m+2}$
\end{enumerate}
\end{proof}

\noindent To summarize the results of these obstructions, consider the following conditions which will apply to a knot $C(m,n)$ where $n$ is either positive or negative. Specifically, conditions (a) and (b) will be used when $n>0$ whereas conditions (c) and (d) will be used when $n<0$.

\begin{enumerate}
\renewcommand{\theenumi}{\alph{enumi}}
\item There exists no integer solutions to $m=n x^2 + 2x +y^2$.
\item There exists no integer solutions to $n=m x^2 + 2x +y^2$.
\item There exists no integral solutions to the set of equations
\begin{align*}
\label{short XXT equations}
m &= x_{11}^2+x_{12}^2+x_{13}^2 \\ 
-n &=  x_{21}^2+x_{22}^2+x_{23}^2 \\
-1& =x_{11} x_{21}+ x_{12} x_{22}+x_{13} x_{23}
\end{align*}
\item $m \neq 4$, $n \neq -4$, and $(m,n)\neq (5,-2)$.
\end{enumerate}
\vspace{.2cm}

Using the results of Yasuhara and Jabuka--Kelly in this section together with the mod 8 values of $\sigma(K) + 4 \mbox{Arf}(K)$ from Proposition~\ref{sigma}, we obtain the following proposition.

\begin{proposition} \label{gamma4>1 conditions} If $m>1$ and  $n$ is even and nonzero, then the following table summarizes  those conditions given above which imply that $\gamma_4(C(m,n)) \ge 2$.
\begin{center}
\begin{tabular}{|c|c|c||c|c|} \hline
 & \multicolumn{2}{|c||}{$n > 0$} & \multicolumn{2}{|c|}{$n<0$} \\ \hline
 (mod 4) & $n \equiv 0$  & $n \equiv 2$ & $n \equiv 0$ & $n \equiv 2$ \\ \hline
 $m \equiv 0$ &   (a) and (b) &  (a) and (b) & (d) & (d) \\ \hline
 $m \equiv 1$  &  (a) and (b) &  (a) &  (d) &  (c) and (d) \\ \hline
 $m \equiv 2$  &  (a) and (b) & always & (d) &  (c)\\ \hline
 $m \equiv 3$  &  (a) and (b) &  (b) &  (d) & always  
 \\ \hline
 \end{tabular}
\end{center}
\end{proposition}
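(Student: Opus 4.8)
The plan is to assemble the table entry-by-entry from three ingredients already in hand: the mod 8 values of $\sigma+4\mathrm{Arf}$ tabulated in Proposition~\ref{sigma}, the trichotomy of Jabuka--Kelly obstructions in Theorem~\ref{JK theorems}, and the concrete embedding criteria translated into conditions (a)--(d) via Propositions~\ref{m,n>0 phi embedding conditions}, \ref{m,-n>0 phi embedding conditions}, and \ref{negdefembedding-n-negative}. Since the statement is an organizational summary, the proof is a bookkeeping argument: for each of the eight residue classes of $(m \bmod 4, n \bmod 4)$ paired with the sign of $n$, I read off the correct row of Proposition~\ref{sigma}, decide which part of Theorem~\ref{JK theorems} applies, and then name the condition that guarantees the nonexistence of the relevant embedding.

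First I would split into the cases $n>0$ and $n<0$, because these invoke different Goeritz matrices and hence different embedding propositions. For $n>0$, I use the matrices $G_+$ and $G_-$ of Proposition~\ref{Goeritz matrices}, whose embeddability is governed by Proposition~\ref{m,n>0 phi embedding conditions}: an embedding of type \eqref{phi I plus embedding} fails exactly when condition (a) holds, and one of type \eqref{phi I minus embedding} fails exactly when (b) holds. For $n<0$, I use $\Gamma_+$ and $\Gamma_-$, governed by Propositions~\ref{m,-n>0 phi embedding conditions} and \ref{negdefembedding-n-negative}: the $\Gamma_+$-embedding fails exactly under (c), and the $\Gamma_-$-embedding fails exactly under (d). With this dictionary fixed, each table cell reduces to matching the $\sigma+4\mathrm{Arf}$ value against the four cases of Theorem~\ref{JK theorems}.

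The case analysis then runs as follows. When $\sigma+4\mathrm{Arf}\equiv 0 \Mod 8$, part (3) of Theorem~\ref{JK theorems} requires both the $\epsilon=+1$ and $\epsilon=-1$ embeddings to fail, so I must list the condition obstructing each: for $n>0$ this is ``(a) and (b)'', and for $n<0$ it is ``(d)'' together with the $\Gamma_+$-obstruction---however, one checks that in the relevant rows the $\Gamma_+$ embedding always exists (by a simple solution of Equations~\ref{short XXT equations}), so only (d) survives in the table. When $\sigma+4\mathrm{Arf}\equiv \pm 2$, only one sign of definiteness is obstructed, giving a single condition per cell: (a) or (b) for $n>0$, and (c) or (d) for $n<0$, matched according to whether the residue forces the $+2$ or $-2$ case. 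When $\sigma+4\mathrm{Arf}\equiv 4$, Theorem~\ref{Yasuhara lemma} (equivalently part (4)'s hypothesis) applies unconditionally, yielding the entry ``always''; this accounts for the $(m\equiv 2, n\equiv 2, n>0)$ and $(m\equiv 3, n\equiv 2, n<0)$ cells.

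I expect the main obstacle to be the $n<0$, $\epsilon=+1$ sub-cases, where one must verify that the positive-definite obstruction (condition (c), via Proposition~\ref{m,-n>0 phi embedding conditions}) is genuinely never the binding constraint in the rows where $\sigma+4\mathrm{Arf}\equiv 0$ or $\equiv 2$, so that the table correctly records only (d) or only (c) rather than a conjunction. Concretely, I would exhibit, for each such residue class, an explicit small integral solution to the three-square system of Equations~\ref{short XXT equations} (for instance using $m=x_{11}^2+x_{12}^2+x_{13}^2$ with one coordinate adjusted to force the $-1$ cross term), showing the $\Gamma_+$-embedding always exists and hence drops out of the list. The remaining cells are then immediate from the dictionary above, and the proof concludes by noting that every entry of the table has been matched to its obstruction.
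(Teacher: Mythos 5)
Your overall strategy---reading off the mod-8 value of $\sigma+4\mathrm{Arf}$ from Proposition~\ref{sigma}, selecting the matching part of Theorem~\ref{JK theorems}, and translating embedding-nonexistence into conditions (a)--(d) via Propositions~\ref{m,n>0 phi embedding conditions}, \ref{m,-n>0 phi embedding conditions}, and \ref{negdefembedding-n-negative}---is exactly the bookkeeping argument the paper intends, and your dictionary is applied correctly in all the $n>0$ cells and in the $n<0$ cells where $\sigma+4\mathrm{Arf}\equiv\pm2\Mod{8}$, as well as in the two ``always'' cells via Theorem~\ref{Yasuhara lemma}.

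However, your handling of the case $\sigma+4\mathrm{Arf}\equiv 0\Mod 8$ with $n<0$ contains a genuine error. The unique such cell is $m\equiv 1$, $n\equiv 2\Mod 4$, $n<0$, and the table entry there is the conjunction ``(c) and (d)''---yet you assert that ``the $\Gamma_+$ embedding always exists\dots\ so only (d) survives in the table.'' This is wrong twice over. Logically: part (3) of Theorem~\ref{JK theorems} requires \emph{both} the positive- and negative-definite embeddings to fail, so if the $\Gamma_+$ embedding always existed, the hypothesis of part (3) could never be met for these knots and the cell would carry \emph{no} obstruction at all; it would not reduce to (d). Factually: the $\Gamma_+$ embedding does not always exist. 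Condition (c) holds for infinitely many knots in precisely this residue class---for example $C(13+36k,-10)$, where $13+36k\equiv 1$ and $-10\equiv 2\Mod 4$, and the proof of Theorem~\ref{double twist knots with gamma4 equal to 2}(14) shows the system in (c) has no solutions modulo $9$; it is exactly the conjunction ``(c) and (d)'' that is invoked there to conclude $\gamma_4=2$. The same confusion resurfaces in your last paragraph, where you propose checking that (c) is ``never the binding constraint'' in the rows with $\sigma+4\mathrm{Arf}\equiv 2$: nothing needs checking there, because part (2) of Theorem~\ref{JK theorems} involves only the negative-definite embedding, so (c) is simply irrelevant to those cells rather than dominated by (d). Deleting the claim that (c) drops out, and recording ``(c) and (d)'' in the $m\equiv1$, $n\equiv2$, $n<0$ cell, repairs the argument; the rest of your case analysis then matches the stated table.
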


The table in Proposition~\ref{gamma4>1 conditions} is read as follows. For example, if $n<0$, $n \equiv 2 \Mod 4$, $m \equiv 1 \Mod4 $, and conditions (c) and (d) hold, then $\gamma_4(C(m,n)) \ge 2$.

We now list several infinite families of double twist knots with $\gamma_4=2$. In Theorem~\ref{double twist knots with gamma4 equal to 2}, we include at least one infinite family for each of the 16 cases stated in Proposition~\ref{gamma4>1 conditions}. Before stating the theorem, we prove the following lemma that helps us  analyze conditions (a) and (b) above and another condition that will appear in Section~\ref{gamma_4 is 3}. Theorem~\ref{double twist knots with gamma4 equal to 2} is in not meant to be exhaustive. We found many additional infinite families that we do not list here. (See for example, the discussion of twist knots in Section~\ref{twist knots}.)  

\begin{lemma} \label{ab condition} Suppose that $r$, $s$, and $t$ are positive integers.  If $r< s t^2-2t$ and $(x,y,z)$ is an integer solution to $r=s x^2+2x+y^2+z^2$, then $|x|<t$. 
\end{lemma}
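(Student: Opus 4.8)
The plan is to prove the contrapositive by bounding $|x|$ directly from the equation. Suppose $(x,y,z)$ is an integer solution to $r = s x^2 + 2x + y^2 + z^2$. Since $y^2 + z^2 \geq 0$, we immediately obtain the inequality $r \geq s x^2 + 2x$. The goal is to show that if $|x| \geq t$, then this forces $r \geq s t^2 - 2t$, contradicting the hypothesis $r < s t^2 - 2t$.

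First I would analyze the quadratic function $g(x) = s x^2 + 2x$ and use monotonicity on the relevant range. The key observation is that $g(x) = s x^2 + 2x$ is an upward-opening parabola (since $s > 0$) with vertex at $x = -1/s$, which lies in the interval $(-1, 0]$. Thus $g$ is increasing for $x \geq 0$ and decreasing for $x \leq -1$. I would split into the two cases $x \geq t$ and $x \leq -t$. In the case $x \geq t > 0$, monotonicity on $[0,\infty)$ gives $g(x) \geq g(t) = s t^2 + 2t > s t^2 - 2t$, so $r \geq g(x) > s t^2 - 2t$, a contradiction. In the case $x \leq -t$, monotonicity on $(-\infty, -1]$ (valid since $-t \leq -1$ because $t \geq 1$) gives $g(x) \geq g(-t) = s t^2 - 2t$, so $r \geq g(x) \geq s t^2 - 2t$, again contradicting $r < s t^2 - 2t$.

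Having ruled out both $x \geq t$ and $x \leq -t$, I conclude that $-t < x < t$, i.e.\ $|x| < t$, which is exactly the desired statement. Note that the asymmetry in the two cases is precisely why the threshold $s t^2 - 2t$ (with the $-2t$ rather than $+2t$) is the sharp one: the negative-$x$ branch is the binding constraint, and the hypothesis is tailored so that $x = -t$ just barely fails to produce a solution.

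I expect the only subtlety to be confirming the monotonicity range rather than any difficult estimate. The one point requiring a moment of care is checking that $x \leq -t$ indeed lies in the decreasing region; this uses $t \geq 1$ (so that $-t \leq -1 < -1/s$) together with $s \geq 1$, both of which hold since $r, s, t$ are positive integers. The rest of the argument is an elementary comparison of values of a quadratic, so there is no genuine obstacle — the main task is simply organizing the two sign cases and invoking $y^2 + z^2 \geq 0$ to reduce to the one-variable inequality.
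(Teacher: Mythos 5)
Your proof is correct and follows essentially the same route as the paper's: both reduce to the one-variable inequality $r \ge s x^2 + 2x$ via $y^2+z^2\ge 0$, use the monotonicity of the quadratic $s x^2+2x$ about its vertex at $x=-1/s$, and derive a contradiction separately in the cases $x\ge t$ and $x\le -t$. The extra care you take in verifying the monotonicity ranges (using $t\ge 1$, $s\ge 1$) is implicit in the paper's argument, so there is nothing to add.
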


\begin{proof} Assume $(x,y,z)$ is an integer solution to $r=s x^2+2x+y^2+z^2$. Because $s$ is positive, the function $s x^2+2x$ is increasing for $x> -1/s$ and decreasing for $x < -1/s$. Thus, if $x \ge t$, then $r = sx^2+2x+y^2+z^2 \ge s t^2 +2t +y^2+z^2 \ge s t^2 -2t  > r$ which is a contradiction. Similarly, if $x \le -t$, then $r = sx^2+2x+y^2+z^2 \ge s t^2 - 2t +y^2+z^2 \ge s t^2 - 2t >r$ which also gives a contradiction. Therefore, $|x|<t$.
\end{proof}  

\begin{theorem}\label{double twist knots with gamma4 equal to 2} If $m>1$, $n$ is even and nonzero,  and $(m,n)$ is one of the following, then $\gamma_4(C(m,n))=2$.
\begin{enumerate}
\item $(2,n)$, with $n>0$ and $n\equiv 2\Mod{4}$,  

\item $(m,2)$, with $m\equiv 2 \Mod{4}$, 

\item $(m,n)$, with $m \equiv 3 \Mod{4}$, $n<0$, and $n\equiv 2 \Mod{4}$,    

\item  $(m,m)$, with $m \equiv 0 \Mod{4}$ and $m$ not a square, 

\item $m$ is odd, $n\equiv 0 \Mod{4}$, $n<0$,  and $n\ne -4$,  

\item $(8,n)$, with $n<0$ and $n\ne -4$,  

\item $(m,-8)$, with $m$ even and $m\ne 4$, 

\item $m \equiv 1\pmod{4}$, $m$ not a square, $n\equiv 2 \pmod{4}$, and $n>m+2$, 

\item $m \equiv 3 \pmod{4}$,  $n>0$, $n\equiv 2 \pmod{4}$, and $m>n+2$, 

\item $(m, m+10)$, with $m>2$  even, $m$ not a square, and  $m+10$ not a square, 

\item $(13+4k,8+4k)$ where $k\ge 0$, $13+4k$ not a square, and $8+4k$ not a square, 

\item $(7+4k,12+4k)$ where $k\ge 0$ and $12+4k$ not a square,

\item\label{twistex} $(2, -(18+100 k))$ where $k\ge 0$, 

\item $(13+36k,-10)$ where $k\ge0$. 

\end{enumerate}  
\end{theorem}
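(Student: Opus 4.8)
The plan is to prove, for each of the fourteen families, the two inequalities $\gamma_4(C(m,n))\ge 2$ and $\gamma_4(C(m,n))\le 2$ separately. The lower bounds are entirely obstruction-theoretic: for a given family I would read off $m\bmod 4$, the sign of $n$, and $n\bmod 4$, locate the corresponding cell of the table in Proposition~\ref{gamma4>1 conditions}, and verify the condition(s) there. Families (1), (2), (3) land in an ``always'' cell, so $\gamma_4\ge 2$ is immediate from Theorem~\ref{Yasuhara lemma}. Families (5), (6), (7) land in a ``(d)'' cell, and (d) holds trivially since in each case $m\ne 4$, $n\ne -4$, and $(m,n)\ne(5,-2)$.

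The lower bounds for families (4), (8), (9), (10), (11), (12) require ruling out integer solutions of $m=nx^2+2x+y^2$ (condition (a)) and/or $n=mx^2+2x+y^2$ (condition (b)). Here I would use Lemma~\ref{ab condition} to bound $|x|$ by a small constant, reducing each to finitely many cases. The recurring mechanism is that $x=0$ forces $m$ (respectively $n$) to be a perfect square, which is excluded either by an explicit ``not a square'' hypothesis or by the observation that $n\equiv 2\pmod 4$ cannot be a square; the remaining values $x=\pm1$ and $|x|\ge 2$ produce equations of the form $y^2=c$ with $c$ a fixed non-square or $c<0$, or they contradict the size estimate of Lemma~\ref{ab condition}. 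For instance, in family (11) the choices $x=\pm1$ give $y^2\in\{3,7\}$ (for (a)) and $y^2\in\{-3,-7\}$ (for (b)), all impossible, while $|x|\ge2$ is ruled out directly.

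For the upper bounds, the families with $m$ odd, $m=2$, or $|n|=2$ — namely (1), (2), (3), (5), (8), (9), (11), (12), (13), (14) — satisfy $\gamma_3=2$ by Proposition~\ref{crosscap number of double-twist knots}, giving $\gamma_4\le\gamma_3=2$ for free. The remaining even families (4), (6), (7), (10) have $\gamma_3=3$, so I would exhibit a nonorientable spanning surface of first Betti number $2$ directly. For (6), (7), (10) a single nonorientable Kearney band move suffices: $C(8,n)\to C(4,n)$, $C(m,-8)\to C(m,-4)$, and $C(m,m+10)\to C(m,m+6)$ each land on a knot already shown to have $\gamma_4=1$ in Theorem~\ref{double twist knots with gamma4 equal to 1}, and attaching one band to its M\"obius band yields $\gamma_4\le 2$. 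Family (4) is more delicate: since $\gamma_4(C(m,m))=2$, no single band move to a slice knot can exist, so I would use two band moves to a slice knot (at least one nonorientable), for example composing a Kearney move with a horizontal band move verified slice via Lisca's classification as in Theorem~\ref{double twist knots with gamma4 equal to 0}.

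The main obstacle will be the lower bounds for families (13) and (14), which require condition (c): the non-existence of integer solutions to $m=\sum_j x_{1j}^2$, $|n|=\sum_j x_{2j}^2$, $-1=\sum_j x_{1j}x_{2j}$ (Proposition~\ref{m,-n>0 phi embedding conditions}). The strategy is that one of the two norms is small and forces the corresponding vector into a tiny explicit set. In family (14), $|n|=10=3^2+1^2+0^2$ is the \emph{only} three-square representation, so $(x_{2j})$ is a signed permutation of $(3,1,0)$ and the constraint becomes $3a_i\pm a_j=\pm1$ for a vector $a$ with $|a|^2=13+36k$; in family (13), $m=2$ forces $(x_{1j})$ to be a signed permutation of $(1,1,0)$, so the constraint becomes that some three-square representation of $|n|=18+100k$ has two parts whose absolute values differ by exactly $1$. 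In each case I would derive a contradiction from a congruence argument, reducing the norm and the linear relation modulo a modulus dividing the common difference of the family (which makes the relevant residues constant, e.g.\ $13+36k\equiv 4\pmod 9$ and $18+100k\equiv 18\pmod{25}$, with $\equiv 1$ and $\equiv 2\pmod 4$ respectively). Pinning down moduli that simultaneously obstruct \emph{every} admissible representation, not merely a convenient one, is the crux of the argument.
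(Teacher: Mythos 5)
Your proposal is correct and follows essentially the same route as the paper: lower bounds come from the table in Proposition~\ref{gamma4>1 conditions}, with Lemma~\ref{ab condition} bounding $|x|$ for conditions (a)/(b) and, for families (13) and (14), condition (c) reduced—exactly as the paper does—by pinning down the short vector ($m=2$, resp.\ $|n|=10$) and ruling out the resulting equation modulo $25$, resp.\ modulo $9$; upper bounds come from $\gamma_3=2$ where available and from band moves for families (4), (6), (7), (10). The only under-specified point is family (4), where your suggested Kearney-plus-horizontal combination is not actually verified to reach a slice knot; the paper instead uses two horizontal band moves $C(m,m)\to C(m+1,m)\to C(m+2,m)$, the last knot being slice by Theorem~\ref{double twist knots with gamma4 equal to 0}, which fits your stated strategy of two band moves to a slice knot.
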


\begin{proof}
First we show that any knot $K$ in the above list has $\gamma_4(K)\leq 2$. By Proposition~\ref{crosscap number of double-twist knots}, we have that $\gamma_3=2$ in every case except $(4), (6), (7)$, and $(10)$. In case $(4)$, a single horizontal band move of the  type illustrated on the left in Figure~\ref{band moves} with $k=0$ and $\epsilon=1$ will take $C(m,m)$ to $C(m+1, m)$. A second band move of the same type takes $C(m+1, m)$  to the slice knot $C(m+2, m)$. In cases $(6)$ and $(7)$, two band moves of the kind shown on the right in Figure~\ref{band moves} can be used to take $K$ to the unknot. Finally, in case $(10)$, two successive band moves of the type shown on the right in Figure~\ref{band moves} change $C(m,m+10)$ to the slice knot $C(m,m+2)$.

It remains to show that the appropriate conditions stated in Proposition~\ref{gamma4>1 conditions} are met in order to conclude that $\gamma_4(K)>1$ and hence equal to 2 for these knots. If $K$ is a knot in (1) or (2), then $m \equiv n \equiv 2 \Mod{4}$ and $n>0$ and so $\gamma_4(K)=2$ by Proposition~\ref{gamma4>1 conditions}. Similarly, if $K$ is a knot in (3), then Proposition~\ref{gamma4>1 conditions} with $m \equiv 3$, $n \equiv 2 \Mod{4}$, and $n<0$ implies $\gamma_4(K)=2$. 
We remark that  (1)  was proven by Feller and Golla (see Proposition 5.8 in \cite{feller2021}).

Now suppose $K$ is a knot in (4). Then Proposition~\ref{gamma4>1 conditions} with $m \equiv n \equiv 0 \Mod{4}$ and $n>0$ will give $\gamma_4(K)=2$ provided conditions (a) and (b) are met. Because $m=n$, this reduces to a single condition that there are no integer solutions to $m = mx^2+2x+y^2$. Because $m>1$, we have $m < 4m-4$. So by Lemma~\ref{ab condition} with $t=2$ and $z=0$, any integer solution $(x,y)$ to $m=mx^2+2x+y^2$ must have $|x| <2$. However there is no solution when $x=0$ because $m$ is not a square. Moreover $x=\pm 1$ would imply that $\mp 2$ is a square. Therefore, there are no integer solutions to $mx^2+mx+y^2$, and so $\gamma_4(K)=2$. 

For a knot $K$ in (5), (6), or (7), condition (d) is met, and so Proposition~\ref{gamma4>1 conditions} implies $\gamma_4(K) =2$.

If $K$ is a knot in (8), then $m \equiv 1 \Mod{4}$, $n \equiv 2 \Mod{4}$, and $n>0$, so it suffices to show condition (a) is met. Because $m < n -2$, Lemma~\ref{ab condition} with $t=1$ and $z=0$ implies that an integral solution must have $|x|<1$. However, a solution with $x=0$ would contradict that $m$ is not a square. Therefore, condition (a) is met, and so $\gamma_4(K) =2$.

If $K$ is a knot in (9), then $m \equiv 3 \Mod{4}$, $n \equiv 2 \Mod{4}$, and $n>0$ so it suffices to show condition (b) is met. Because $n < m -2$, Lemma~\ref{ab condition} with $t=1$ and $z=0$ gives that an integral solution must have $|x|<1$. However, $x=0$ contradicts that $n$ is not a square, and so $\gamma_4(K) =2$.

If $K$ is a knot in (10), then by Proposition~\ref{gamma4>1 conditions} with $m \equiv 0$, $n \equiv 2 \Mod{4}$, and $n>0$ or with $m \equiv 2$, $n\equiv 0 \Mod{4}$, and $n>0$ it suffices to show conditions (a) and (b) are met. Because $m < m +8$, Lemma~\ref{ab condition} with $t=1$ gives that an integral solution to $m=(m+10)x^2+2x+y^2$ must have $|x|<1$. But, there is no solution with $x=0$ because $m$ is not a square. Thus, condition (a) is met. On the other hand, $m>2$ implies that $m+10 < 9m-6$. So Lemma~\ref{ab condition} with $t=3$ gives that any integer solution to $m+10=mx^2+2x+y^2$ must have $|x|<3$. If $x=0$, then $m+10$ is a square which contradicts our assumption. If $x = \pm1$, then $8$ or $12$, respectively, would be a square which is a contradiction. If $x = 2$, then an integer solution would give that $6=3m+y^2$ and this contradicts that $m>2$ is even. Finally, if $x=-2$, then $14=3m+y^2$. This implies that $m=4$, because $m >2$. However, this now gives $2$ is a square which is a contradiction. Therefore (b) is met as well, and so $\gamma_4(K)=2$.

If $K$ is a knot in (11), then by Proposition~\ref{gamma4>1 conditions} with $m \equiv 1$, $n \equiv 0 \Mod{4}$, and $n>0$ it suffices to show conditions (a) and (b) are met. Because $8+4k < 11+4k$, Lemma~\ref{ab condition} with $t=1$ gives that an integral solution to $8+4k=(13+4k)x^2+2x+y^2$ must have $|x|<1$. But, there is no solution with $x=0$ because $8+4k$ is not a square. Thus, condition (b) is met. On the other hand, because $13+4k < 4(8+4k)-4$, Lemma~\ref{ab condition} with $t=2$ gives that any integer solution to $13+4k=(8+4k)x^2+2x+y^2$ must have $|x|<2$. If $x=0$, then $13+4k$ is a square which contradicts our hypothesis. Whereas if $x = \pm1$, then $3$ or $7$, respectively, would be a square which is a contradiction. Therefore (a) is met as well, and so $\gamma_4(K)=2$.

If $K$ is a knot in (12), then by Proposition~\ref{gamma4>1 conditions} with $m \equiv 3$, $n \equiv 0 \Mod{4}$, and $n>0$ it suffices to show conditions (a) and (b) are met. Because $7+4k < 10+4k$, Lemma~\ref{ab condition} with $t=1$ gives that an integral solution to $7+4k=(12+4k)x^2+2x+y^2$ must have $|x|<1$. But, there is no solution with $x=0$ because $7+4k \equiv 3 \Mod{4}$ guarantees that $7+4k$ is not a square. Thus, condition (a) is met. Because $12+4k < 4(7+4k)-4$, Lemma~\ref{ab condition} with $t=2$ gives that any integer solution to $12+4k=(7+4k)x^2+2x+y^2$ must have $|x|<2$. If $x=0$, then $12+4k$ is a square which contradicts our hypothesis. If $x = \pm1$, then $3$ or $7$, respectively, would be a square which is a contradiction. Therefore (b) is met as well, and so $\gamma_4(K)=2$.

For a knot $K$ in (13), it suffices to show that condition (c) is satisfied. Without loss of generality we may assume $x_{11}= \epsilon$, $x_{12}=\delta$, and $x_{13}=0$ where $\epsilon, \delta \in \{-1,1\}$  for the first equation of (c) to be true. It now follows from the third equation that $x_{22}= - \delta (\epsilon x_{21}+1)$ and so the second equation becomes $18+100k = x_{21}^2+(\epsilon x_{21}+1)^2+x_{23}^2$. One can computationally verify  that this equation has no solutions modulo 25. Therefore (c) is met, and so $\gamma_4(K)=2$.

For a knot $K$ in (14), it suffices to show that conditions (c) and (d) are satisfied. Condition (d) is clearly satisfied. From the second equation of (c) we have $10=x_{21}^2+x_{22}^2+x_{23}^2$ and so without loss of generality we may assume $x_{21}=3\epsilon$, $x_{22}=\delta$, and $x_{23}=0$ where $\epsilon, \delta \in \{-1,1\}$. It now follows from the third equation that $x_{12}=-\delta(3\epsilon x_{11}+1)$ and so the second equation becomes $13+36k = x_{11}^2+(3\epsilon x_{11}+1)^2+x_{13}^2$.  One can computationally verify  that this equation has no solutions modulo 9.  Therefore (c) is met as well, and so $\gamma_4(K)=2$.
\end{proof}

As already stated, each of the 14 families in Theorem~\ref{double twist knots with gamma4 equal to 2} is infinite. Notice that in cases (10)--(12), because the distance between consecutive squares grows without bound, there are infinitely many  $m$ and $n$ that satisfy the hypothesis. Alternatively, in cases (11) and (12), if $k \equiv 0 \Mod{4}$, then $13+4k$, $8+4k$, and $12+4k$ are not squares because $13, 8$, and $12$ are not squares modulo $16$.


\section{Double twist knots with $\gamma_4 = 3$}\label{gamma_4 is 3}

In  this section we develop an obstruction to $\gamma_4(C(m,n)) < 3$.  

\begin{proposition}\label{m,n>0 phi II embedding conditions}
Let $m>1$, $n>0$ and even, and  $G_\epsilon$  be the $\epsilon$-definite Goeritz matrix of $C(m,n)$ as given in Proposition~\ref{Goeritz matrices}. Then there is an embedding $\phi$ as given in Theorem~\ref{JK theorems} \eqref{phi II embedding} with $\epsilon=1$ if and only if there exist integers $x, y$ and $z$ such that
\begin{equation*}\label{xyz plus condition}m =n x^2+2x+y^2+z^2.\end{equation*}
There is such an embedding with $\epsilon=-1$ if and only if there exist integers $x, y$ and $z$ such that
\begin{equation*}\label{xyz minus condition}n =m x^2+2x+y^2+z^2.\end{equation*}
Consequently, if $m \equiv n \equiv 2 \Mod{4}$, there exists no integer solutions to $m=n x^2 + 2x +y^2 + z^2$, and there exists no integer solutions to $n=m x^2 + 2x +y^2 + z^2$, then $\gamma_4(C(m,n))\ge 3$.
\end{proposition}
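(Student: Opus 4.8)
The plan is to prove both biconditionals by rerunning the argument of Proposition~\ref{m,n>0 phi embedding conditions} almost verbatim, the only change being that the codomain now carries two extra orthonormal directions rather than one. I would begin with the case $\epsilon=1$. By Proposition~\ref{Goeritz matrices}, the matrix $G_+$ is represented by a chain of $n$ vertices $e_1,\dots,e_n$ in which $e_1,\dots,e_{n-1}$ have self-pairing $2$, the terminal vertex $e_n$ has self-pairing $m+1$, and consecutive vertices pair to $-1$. This is exactly the weighted graph of Lemma~\ref{lemma1} with $k=n-1$ and $\ell=0$, so the distinguished index is $k+1=n$. Since Lemma~\ref{lemma1} is stated for an arbitrary target dimension $r$, it applies with $r=n+2$ and lets me assume, after a change of basis of the codomain, that $\phi(e_i)=f_i-f_{i+1}$ for $1\le i\le n-1$, where $f_i\cdot f_j=\delta_{ij}$.

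With the chain normalized, I would read off $\phi(e_n)=\sum_{i=1}^{n+2}a_if_i$ from the pairing relations: orthogonality $e_i\cdot e_n=0$ for $i\le n-2$ forces $a_1=\dots=a_{n-1}=:x$, the relation $e_{n-1}\cdot e_n=-1$ forces $a_n=x+1$, and then $e_n\cdot e_n=m+1$ becomes $(n-1)x^2+(x+1)^2+a_{n+1}^2+a_{n+2}^2=m+1$, which simplifies to $m=nx^2+2x+y^2+z^2$ upon setting $y=a_{n+1}$ and $z=a_{n+2}$. This is precisely where the two extra coordinates produce the pair of squares $y^2+z^2$ in place of the single $y^2$ of Proposition~\ref{m,n>0 phi embedding conditions}. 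For the converse I would define $\phi$ by the same formulas, taking $\phi(e_n)=x(f_1+\dots+f_{n-1})+(x+1)f_n+yf_{n+1}+zf_{n+2}$, verify directly that it preserves all pairings, and note that an isometry into a definite lattice is automatically injective. The case $\epsilon=-1$ is identical after replacing $G_+$ by $G_-$, $n$ by $m$, and adjusting signs, yielding $n=mx^2+2x+y^2+z^2$.

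For the final claim I would combine the two biconditionals with Theorem~\ref{JK theorems}~\eqref{phi II embedding}. When $m\equiv n\equiv 2\Mod 4$ and $n>0$, Proposition~\ref{sigma} gives $\sigma(C(m,n))+4\,\mathrm{Arf}(C(m,n))\equiv 4\Mod 8$, and since $m,n>0$ the diagram $C(m,n)$ is alternating with the definite Goeritz matrices of Proposition~\ref{Goeritz matrices}. Thus the hypotheses of Theorem~\ref{JK theorems}~\eqref{phi II embedding} are exactly that neither codimension-two embedding exists, which by the biconditionals is exactly the non-solvability of the two stated equations; the conclusion $\gamma_4(C(m,n))\ge 3$ follows immediately.

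I do not expect a genuine obstacle here, since the content is carried entirely by Lemma~\ref{lemma1} and the rest is bookkeeping. The one point requiring care is confirming that Lemma~\ref{lemma1} still forces the normalization $\phi(e_i)=f_i-f_{i+1}$ even though the codomain is strictly larger than the domain; this is fine because the lemma is proved for arbitrary $r$ and its inductive inequalities never use $r=\mathrm{rank}(G_\epsilon)$. I would also be careful in the converse to check the single nontrivial self-pairing $\phi(e_n)\cdot\phi(e_n)=m+1$, ensuring the cross-terms among the $f_i$ cancel correctly, and to track the sign conventions faithfully in the negative-definite case.
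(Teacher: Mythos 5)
Your proposal is correct and follows essentially the same argument as the paper: normalize the chain via Lemma~\ref{lemma1}, solve for the image of the exceptional vertex to get the Diophantine condition, construct $\phi$ explicitly for the converse, and invoke Proposition~\ref{sigma} with Theorem~\ref{JK theorems}~\eqref{phi II embedding} for the final claim. The only difference is cosmetic: the paper writes out the $\epsilon=-1$ case and leaves $\epsilon=1$ to the reader, while you do the reverse.
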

\begin{proof}
The proofs for $\epsilon=1$ or $-1$ are similar. We consider the case of $\epsilon=-1$ and leave the other case to the reader.

Suppose there is an embedding $\phi$ as given in Theorem~\ref{JK theorems} \eqref{phi II embedding} with $\epsilon=-1$. The negative-definite Goeritz matrix $G_-$ is represented by the weighted graph below. 
\begin{center}
   \includegraphics[width=5cm]{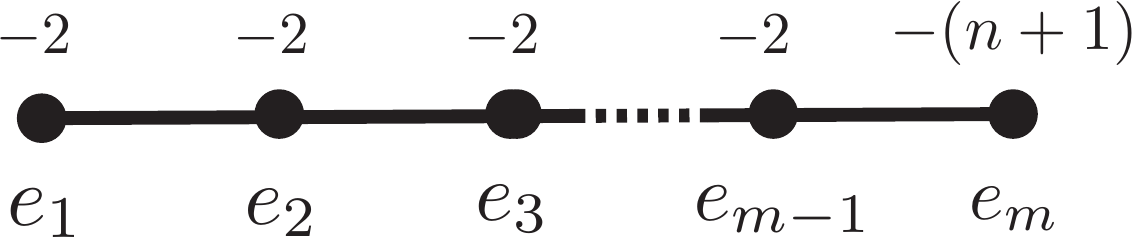}
\end{center}
By Lemma~\ref{lemma1}, we may assume that
$\phi(e_i)=f_i-f_{i+1}\text{ for } 1\le i \le m-1.$ Suppose that $\phi(e_m)=\sum_{i=1}^{m+2} a_i f_i$. Because $e_i \cdot e_m=a_i-a_{i+1} = 0$ for $1\le i\le m-2$, it follows that
$$a_1=a_2=\cdots=a_{m-1}$$ and we rename $a_1=x$. Because $1=e_m \cdot e_{m-1}=-x+a_m$ we have that $a_m=x+1$. Renaming $a_{m+1}$ and $a_{m+2}$ as $y$ and $z$, respectively, we finally obtain $n+1=(m-1)x^2+(x+1)^2+y^2+z^2$ by considering $e_m \cdot e_m$. This is equivalent to $n=m x^2+2x+y^2+z^2$.

Conversely, suppose there exist integers $x,y,z$ such that $n=m x^2+2x+y^2+z^2$. We may define $\phi$ as
 \begin{align*}
      \phi(e_i) &= f_i - f_{i+1} \text{ for } 1\le i\le m-1\\
      \phi(e_m) &=xf_1 + xf_2 + \cdots + xf_{m-1}+ (x+1)f_m + yf_{m+1}+zf_{m+2}.
 \end{align*}
It is easy to check that $\phi$ respects the linking form. Because $G_-$ is negative-definite,  $\phi$ is injective.
Finally, the condition that $m\equiv n \equiv 2 \pmod4$ gives that $\sigma+ 4{\rm Arf}(K)\equiv 4 \pmod 8$, and so there is no embedding of the form required in Theorem~\ref{JK theorems} \eqref{phi II embedding}. Therefore $\gamma_4(C(m,n))\geq 3$.
\end{proof}

We now consider double twist knots with $\gamma_4=3$. (None appear in the Tables~\ref{table1} and \ref{table2}.) As in the previous sections, there are many such families for which Lemma~\ref{ab condition} will guarantee there is no solution to the appropriate equation in Proposition~\ref{m,n>0 phi II embedding conditions}. In the following theorem (which, like Theorem~\ref{double twist knots with gamma4 equal to 2}, is not meant to be exhaustive) we present the proof for one such family.

\begin{theorem}\label{double twist knots with gamma4 equal to 3} 
If $(m,n)=(22+8k,62+8k)$ with $k\ge 0$, then $\gamma_4(C(m,n))=3$.
\end{theorem}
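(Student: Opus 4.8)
The plan is to establish the two inequalities $\gamma_4(C(m,n)) \le 3$ and $\gamma_4(C(m,n)) \ge 3$ separately. For the upper bound I would simply invoke Proposition~\ref{crosscap number of double-twist knots}: here $m = 22+8k$ is even with $m \neq 2$, and $|n| = 62+8k \neq 2$, so $\gamma_3(C(m,n)) = 3$ and hence $\gamma_4(C(m,n)) \le \gamma_3(C(m,n)) = 3$. All the real work is in the lower bound.

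For the lower bound I would apply the final assertion of Proposition~\ref{m,n>0 phi II embedding conditions}, which reduces $\gamma_4(C(m,n)) \ge 3$ to three checks: that $m \equiv n \equiv 2 \Mod{4}$, that $m = nx^2 + 2x + y^2 + z^2$ has no integer solution, and that $n = mx^2 + 2x + y^2 + z^2$ has no integer solution. The congruence is immediate, since $22 \equiv 62 \equiv 2 \Mod{4}$. The two Diophantine nonexistence statements are where Lemma~\ref{ab condition} does the heavy lifting: it bounds $|x|$ so that only finitely many values of $x$ remain to be ruled out. For the equation $m = nx^2 + 2x + y^2 + z^2$, since $m = 22+8k < (62+8k) - 2 = n - 2$, Lemma~\ref{ab condition} with $t = 1$ forces $x = 0$, leaving the requirement $m = y^2 + z^2$. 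For the equation $n = mx^2 + 2x + y^2 + z^2$, since $n = 62 + 8k < 4(22+8k) - 4 = 4m - 4$, Lemma~\ref{ab condition} with $t = 2$ forces $x \in \{-1,0,1\}$; the three cases require $n = y^2+z^2$, then $38 = y^2+z^2$, and then $42 = y^2+z^2$, respectively. Thus everything comes down to showing that none of $m = 22+8k$, $\ n = 62+8k$, $\ 38$, and $42$ is a sum of two integer squares.

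This last number-theoretic step is the crux. The key observation is that a sum of two squares is congruent to one of $0,1,2,4,5 \Mod{8}$, so any integer $\equiv 6 \Mod{8}$ is not a sum of two squares. Since $8k \equiv 0 \Mod{8}$ and $22 \equiv 62 \equiv 38 \equiv 6 \Mod{8}$, all three of $m$, $n$, and $38$ are excluded for every $k \ge 0$. The value $42 \equiv 2 \Mod{8}$ is not caught by this congruence, so I would handle it separately: either by direct inspection (for $0 \le j \le 6$, none of the differences $42 - j^2$ is a perfect square) or by noting that $42 = 2 \cdot 3 \cdot 7$ contains the prime $3 \equiv 3 \Mod{4}$ to an odd power and so cannot be a sum of two squares.

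With all four exclusions in hand, Proposition~\ref{m,n>0 phi II embedding conditions} yields $\gamma_4(C(m,n)) \ge 3$, and combined with the upper bound we conclude $\gamma_4(C(m,n)) = 3$. I expect the only genuine subtlety to be the $x = -1$ case producing $42$, which slips past the clean $\Mod{8}$ criterion and requires its own argument; the values $22+8k$, $62+8k$, and $38$, by contrast, are uniformly dispatched by the single residue $6 \Mod{8}$.
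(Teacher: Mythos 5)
Your proof is correct and follows essentially the same route as the paper's: the upper bound from $\gamma_3=3$, and the lower bound via Proposition~\ref{m,n>0 phi II embedding conditions} together with Lemma~\ref{ab condition} at $t=1$ and $t=2$, reducing everything to the fact that none of $22+8k$, $62+8k$, $38$, and $42$ is a sum of two squares. If anything, you are slightly more careful than the paper, which dismisses the $x=-1$ case with only the remark that $42$ being a sum of two squares is ``also a contradiction,'' whereas you correctly flag that $42\equiv 2\Mod{8}$ escapes the residue criterion handling the other three values and supply the separate argument (factorization $2\cdot 3\cdot 7$ or direct check) it requires.
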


\begin{proof} Recall $\gamma_4(C(m,n))\leq 3$. Because $m \equiv n \equiv 2 \Mod{4}$, Proposition~\ref{m,n>0 phi II embedding conditions} implies that it suffices to show that there are no integer solutions $(x,y,z)$ to either of the following equations 
\begin{align*}
22+8k&=(62+8k)x^2+2x+y^2+z^2\\
62+8k&=(22+8k)x^2+2x+y^2+z^2.
\end{align*} 
First note that $6+8k$ is never the sum of two squares. It it were, then $6$ would be the sum of two squares modulo $8$, which is easy to see is false. Thus neither $22+8k$ nor $62+8k$ are the sum of two squares. By Lemma~\ref{ab condition} with $t=1$, a solution of the first equation would require that $x=0$ but this implies that $22+8k$ is a sum of two squares. On the other hand, $62+8k < (22+8k)2^2-4$ and so by Lemma~\ref{ab condition} with $t=2$, a solution to the second equation would require $|x|<2$. We cannot have $x=0$ because $62+8k$ is not a sum of two squares. Moreover, a solution with $x = \pm 1$ would imply that $38$ or $42$, respectively, would be a sum of two squares which is also a contradiction. Therefore, $\gamma_4(C(m,n))=3$.
\end{proof}

In \cite{Murakami_Yasuhara}, Murakami and Yasuhara noted that $\gamma_4(K) \le 2 g_4(K)+1$ where $g_4$ is the orientable 4-ball genus of a knot. This follows immediately from the fact that a nonorientable spanning surface for $K$ can be obtained by adding a half-twisted band to an oriented spanning surface. Moreover, Murakami and Yasuhara conjectured that there exists a knot $K$ for which the equality is achieved (Conjecture 2.1 of \cite{Murakami_Yasuhara}). Their conjecture was confirmed by Jabuka and Kelly \cite{JK} where they show that the knot $8_{18}$ has $\gamma_4(K)=3$ and $g_4(K)=1$. Because $\gamma_4(K)=0$ when $K$ is slice, if $\gamma_4(K)=2g_4(K)+1$, then we must have $\gamma_4(K) \ge 3$. Among all prime knots of 10 crossings or less, the knot $8_{18}$ is the only one with $\gamma_4=3$ and so this is the only knot with 10 or fewer crossings for which $\gamma_4(K) = 2 g_4(K)+1$. The double twist knots $C(22+8k,62+8k)$ from Theorem~\ref{double twist knots with gamma4 equal to 3} provide an infinite family of  knots which satisfy Murakami and Yasuhara's conjecture.

\begin{corollary}\label{MY conjecture} For all $k \ge 0$, if $K=C(22+8k,62+8k)$, then
$$\gamma_4(K) = 2 g_4(K)+1.$$
\end{corollary}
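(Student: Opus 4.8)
The plan is to prove that $g_4(K)=1$ for every $K=C(22+8k,62+8k)$; the corollary is then immediate, since $\gamma_4(K)=3$ by Theorem~\ref{double twist knots with gamma4 equal to 3}, and hence $2g_4(K)+1=3=\gamma_4(K)$.

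For the lower bound, I would note that $\gamma_4(K)=3$ forces $K$ to be non-slice (a slice knot has $\gamma_4=0$ by definition), so $g_4(K)\ge 1$. Equivalently, substituting $\gamma_4(K)=3$ into the Murakami--Yasuhara inequality $\gamma_4(K)\le 2g_4(K)+1$ gives $3\le 2g_4(K)+1$, that is $g_4(K)\ge 1$, directly. Either way this bound is free once Theorem~\ref{double twist knots with gamma4 equal to 3} is in hand, with no obstruction-theoretic input required.

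For the upper bound I would exhibit an orientable genus-one Seifert surface for $K$ in $S^3$, which gives $g_4(K)\le g_3(K)\le 1$. Since both $m=22+8k$ and $n=62+8k$ are even, the two bands running through the two twist regions of the standard diagram $C(m,n)$ each carry an even number of half-twists and are therefore orientable; joining them by a disk yields an orientable, once-punctured torus whose boundary is $K$. Thus $g_3(K)\le 1$, and because $K$ is nontrivial (its determinant is $mn+1>1$) we in fact have $g_3(K)=1$, so $g_4(K)\le 1$.

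Combining the two estimates gives $g_4(K)=1$ and hence $\gamma_4(K)=3=2g_4(K)+1$, as claimed. Essentially all of the difficulty is already absorbed into Theorem~\ref{double twist knots with gamma4 equal to 3}; the only remaining point needing care is the upper bound $g_4(K)\le 1$, and even this is routine once one observes that the evenness of $m$ and $n$ is exactly what makes the obvious genus-one spanning surface orientable.
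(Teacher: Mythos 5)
Your proof is correct and follows essentially the same route as the paper: both rest on Theorem~\ref{double twist knots with gamma4 equal to 3} giving $\gamma_4(K)=3$, the observation that evenness of $22+8k$ and $62+8k$ yields an orientable genus-one spanning surface, and the Murakami--Yasuhara inequality. The paper merely packages this as the sandwich $3=\gamma_4(K)\le 2g_4(K)+1\le 2g(K)+1=3$ rather than isolating the bounds $g_4(K)\ge 1$ and $g_4(K)\le 1$ separately, which is a cosmetic difference only.
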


\begin{proof} By Theorem \ref{double twist knots with gamma4 equal to 3}, we have $\gamma_4(K)=3$. On the other hand, $g(K)=1$ where $g$ is the genus of the knot because $22+8k$ and $62+8k$ are both even. Since $\gamma_4(K) \le 2 g_4(K) + 1 \le 2 g(K)+1=3 $, it follows that $\gamma_4(K) = 2g_4(K)+1$.
\end{proof}

\section{Twist knots}\label{twist knots}

This paper began as an attempt to find $\gamma_4$ for all twist knots, $C(m,2)$, where $m>0$. While this remains an open project, we summarize in Table~\ref{twist knots summary table} those twist knots for which we can determine $\gamma_4$. Because the  crosscap number of $C(m,2)$ is 2, we must have $\gamma_4(C(m,2))\in \{0,1,2\}$. The only twist knot with $\gamma_4=0$ is the slice knot $C(4,2)$ (first proven by Casson and Gordon \cite{CassonGordon}). In Theorem~\ref{double twist knots with gamma4 equal to 1} we found  only five twist knots with $\gamma_4=1$ (of which all but $C(9,2)$ were already known). However, having only explored very specialized band moves to reach a slice 2-bridge knot, we suspect there are many more. Finally,   Table~\ref{twist knots summary table} lists a sampling of infinite families with $\gamma_4=2$. The first two infinite families appear in Theorem~\ref{double twist knots with gamma4 equal to 2}, as items (2) and (9), respectively. Item (2) is given by Feller and Golla~\cite{feller2021}, while item (9) generalizes their result. Proofs that the additional families have $\gamma_4=2$ follow from Proposition~\ref{families of twist knots with gamma4 equal to 2} and Theorem~\ref{values of t and j that work}.

\begin{table}[htp]
\begin{center}
\begin{tabular}{|l|l|}
\hline
$\gamma_4=0$ & $C(4,2)$\\
\hline
$\gamma_4=1$ & $C(1,2), C(3,2), C(5, 2), C(8,2), C(9,2)$\\
\hline
$\gamma_4=2$ & $C(2+4k, 2), k\ge 0$\\
& $C(7+4k, 2), k\ge 0$\\
& $C(4j+4 tk, 2),k\ge 0 \text{ where $t$ and $j$ are given in Table~\ref{tj-values for m congruent to 0 mod 4}}$\\
& $C(1+4j+4tk, 2),  k\ge 0 \text{ where $t$ and $j$ are given in  Table~\ref{tj-values for m congruent to 1 mod 4}}$\\
\hline
\end{tabular}
\end{center}
\caption{Known values of $\gamma_4$ for some twist knots.}
\label{twist knots summary table}
\end{table}

\newpage
\begin{proposition}\label{families of twist knots with gamma4 equal to 2}
Suppose that $j$ and $t$ are integers with $1<j<t$. 
\begin{enumerate}
\item If $2j \not\equiv x^2+x+2 y^2 \Mod{t}$ for all integers $x$ and $y$, then $\gamma_4(C(4j+4tk, 2))=2$ for all $k\ge 0$.
\item  If $2j \not\equiv x^2+x+2 y^2+2y \Mod{t}$ for all integers $x$ and $y$, then $\gamma_4(C(1+4j+4tk, 2))=2$ for all $k\ge 0$.
\end{enumerate}
\end{proposition}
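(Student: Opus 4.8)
The plan is to reduce the statement to the twist-knot specialization of the obstructions already assembled in the excerpt, and then to handle the modular arithmetic with Lemma~\ref{ab condition}. For a twist knot $C(m,2)$ with $m$ even, Proposition~\ref{crosscap number of double-twist knots} gives $\gamma_3=2$, so $\gamma_4\le 2$ automatically; thus in both cases it suffices to show $\gamma_4\ge 2$. For part (1), the knot is $C(4j+4tk,2)$ with $m=4j+4tk\equiv 0\Mod 4$ and $n=2$, so we are in the $m\equiv0$, $n\equiv2$, $n>0$ cell of Proposition~\ref{gamma4>1 conditions}, which requires both conditions (a) and (b). For part (2), $m=1+4j+4tk\equiv1\Mod 4$ and $n=2$, placing us in the $m\equiv1$, $n\equiv2$, $n>0$ cell, which requires only condition (a). So the whole proposition comes down to verifying the relevant nonexistence-of-solutions statements for the equations $m=nx^2+2x+y^2$ and $n=mx^2+2x+y^2$ of Proposition~\ref{m,n>0 phi embedding conditions}, specialized to $n=2$.

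First I would dispatch the equations where $m$ is the leading coefficient, i.e.\ condition (b) in part (1), since $m>1$ forces $|x|<2$ by Lemma~\ref{ab condition} (with $t=2$, $z=0$, as in the proof of case (4) of Theorem~\ref{double twist knots with gamma4 equal to 2}): the equation $2=mx^2+2x+y^2$ has no solution with $x=0$ since $2$ is not a square, and $x=\pm1$ would force $m$ to be small and lead to a contradiction with $m=4j+4tk$ large. The substantive content lies in condition (a), the equation $m=2x^2+2x+y^2$. Here I would set $m=4j+4tk$ (respectively $m=1+4j+4tk$) and reduce the equation modulo $t$. Using $m\equiv 4j\Mod{t}$ (respectively $m\equiv 1+4j\Mod t$), the equation $m=2x^2+2x+y^2$ becomes a congruence modulo $t$; the trick is to rewrite it so the stated hypotheses $2j\not\equiv x^2+x+2y^2\Mod t$ and $2j\not\equiv x^2+x+2y^2+2y\Mod t$ emerge. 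I expect this to require multiplying through by $2$ (or completing a square) to convert $2x^2+2x+y^2$ into the form $x'^2+x'+2y'^2$ after a linear change of variable, so that solvability of the original congruence is equivalent to $2j$ being represented by the quadratic form in the hypothesis.

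The main obstacle, and the only genuinely delicate step, is this algebraic manipulation translating ``$m=2x^2+2x+y^2$ is solvable'' into ``$2j\equiv x^2+x+2y^2\Mod t$ is solvable.'' Concretely, in part (1) I would reduce $4j+4tk=2x^2+2x+y^2$ modulo $t$ to get $4j\equiv 2x^2+2x+y^2\Mod t$; since $y$ must be even for this to be consistent (because $4j$ and $2x^2+2x$ are even, forcing $y^2$ even), I would write $y=2y'$ when $t$ is odd, or argue more carefully about parity, obtaining $4j\equiv 2x^2+2x+4y'^2$, i.e.\ $2j\equiv x^2+x+2y'^2\Mod t$ after dividing by $2$ (noting $\gcd(2,t)$ must be handled, likely by taking $t$ odd or accounting for the factor). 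The shift to the $+2y$ term in part (2) should arise from the extra $+1$ in $m=1+4j+4tk$, which after reduction contributes a constant that completes to $2y^2+2y$. I would be careful here about whether $t$ is even, since dividing the congruence by $2$ requires either $t$ odd or a compensating argument; I anticipate the cleanest route is to keep the congruence in the doubled form $4j\equiv 2x^2+2x+y^2\Mod{2t}$ or $\Mod t$ and match it directly against the hypothesis without illegitimate cancellation. Once this correspondence is established, the hypothesis directly yields that condition (a) holds for every $k\ge0$, and Proposition~\ref{gamma4>1 conditions} finishes the argument.
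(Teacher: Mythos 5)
Your proposal is correct and follows essentially the same route as the paper's proof: the upper bound from $\gamma_3=2$, the reduction via Proposition~\ref{gamma4>1 conditions} to condition (a) (plus (b) in case (1)), Lemma~\ref{ab condition} to dispose of (b), and the parity-plus-substitution argument converting (a) into the stated congruence hypothesis. The division-by-two issue you flag evaporates if, as the paper does, you substitute $y=2y'$ (resp.\ $y=2y'+1$) into the \emph{integer} equation to get $4j+4tk=2x^2+2x+4y'^2$ (resp.\ $4j+4tk=2x^2+2x+4y'^2+4y'$), divide that integer identity by $2$ to obtain $2j+2tk=x^2+x+2y'^2$ (resp.\ $2j+2tk=x^2+x+2y'^2+2y'$), and only then reduce modulo $t$; no congruence is ever divided, so no assumption on the parity of $t$ is needed.
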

\begin{proof}We begin with (1).
Using Proposition~\ref{gamma4>1 conditions}, it suffices to show that both conditions (a) and (b) hold. Consider first condition (b) and notice that that $2<(4j+4tk)\ell^2-2 \ell$ with $\ell=1$ because $1<j<t$. Thus, by Lemma~\ref{ab condition} any integral solution to the equation $2=(4j+4tk)x^2+2x+y^2$ must have $|x|<1$. But this would imply that $2$ is a square, a contradiction. Next, suppose that the equation $4j+4tk=2x^2+2x+y^2$, or equivalently, $1+4j+4tk=x^2+(x+1)^2+y^2$ has an integral solution. Reducing this modulo $4$ leads to the fact that $y$ must be even. Abusing notation, we replace $y$ with $2y$ and arrive at $2j+2tk=x^2+x+2y^2$, and finally that $2j \equiv x^2+x+2 y^2 \Mod{t}$. Thus, because we are assuming this is false, condition (a) is true.

We now prove (2). Using Proposition~\ref{gamma4>1 conditions}, it suffices to show that  condition (a)  holds. Suppose that the equation $1+4j+4tk=2x^2+2x+y^2$, or equivalently, $2+4j+4tk=x^2+(x+1)^2+y^2$ has an integral solution. Reducing this modulo $4$ leads to the fact that $y$ must be odd. Abusing notation, we replace $y$ with $2y+1$ and arrive at $2j+2tk=x^2+x+2y^2+2y$, and finally that $2j \equiv x^2+x+2 y^2+2 y \Mod{t}$. Thus, because we are assuming this is false, condition (a) is true.
\end{proof}

Here are two examples that illustrate Proposition~\ref{families of twist knots with gamma4 equal to 2}.
 If $t=25$ then the expression $x^2+x+2 y^2 \Mod{25}$ takes on all possible values except $\{1, 11, 16, 21\}$. Halving these numbers modulo $25$ gives the set $\{8,13,18,23\}$ and   four infinite families with $\gamma_4=2$:
\begin{itemize}
\item $C(32+100k, 2), k\ge 0$
\item $C(52+100k, 2), k\ge 0$
\item $C(72+100k, 2), k\ge 0$
\item $C(92+100k, 2), k\ge 0$
\end{itemize}

If $t=25$ then the expression $x^2+x+2 y^2+2y \Mod{25}$ takes on all possible values except $\{3, 8, 13, 23\}$. Halving these numbers modulo $25$ gives the set $\{4, 14, 19, 24\}$ and   four infinite families of the second kind given in Proposition~\ref{families of twist knots with gamma4 equal to 2}: 
\begin{itemize}
\item $C(17+100k, 2), k\ge 0$
\item $C(57+100k, 2), k\ge 0$
\item $C(77+100k, 2), k\ge 0$
\item $C(97+100k, 2), k\ge 0$
\end{itemize}

It is not hard to find values of $t$ and $j$ that satisfy the hypothesis of Proposition~\ref{families of twist knots with gamma4 equal to 2}. In the previous two examples, $t=25$ is the smallest such $t$ for each of the two families given in the Proposition. 
If $T$ is a multiple of $t$, then  each family given in Proposition~\ref{families of twist knots with gamma4 equal to 2} associated with $T$ will be a subset of the family associated with $t$. Thus we seek ``primitive'' values of $t$. The following theorem lists all such values of $t$ up to $1000$, together with the associated values of $j$. We do not know if there are an infinite number of such $t$.

\begin{theorem}\label{values of t and j that work} For the following twist knots $K$, we have  $\gamma_4(K)=2$.  
\begin{enumerate}
\item $K=C(4 j+4 t k, 2)$, where $k\ge 0$   and $(t,j)$ are specified as in Table~\ref{tj-values for m congruent to 0 mod 4}.
\item $K=C(1+4 j+4 t k, 2)$, where $k\ge 0$   and $(t,j)$ are specified as in Table~\ref{tj-values for m congruent to 1 mod 4}.
\end{enumerate}
\end{theorem}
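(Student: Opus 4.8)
The plan is to deduce Theorem~\ref{values of t and j that work} directly from Proposition~\ref{families of twist knots with gamma4 equal to 2}. The Proposition already reduces each claim $\gamma_4=2$ to a single arithmetic condition on the pair $(t,j)$: for part (1), that $2j$ is omitted by the value set of the binary quadratic form $Q_0(x,y)=x^2+x+2y^2$ modulo $t$, and for part (2), that $2j$ is omitted by the value set of $Q_1(x,y)=x^2+x+2y^2+2y$ modulo $t$. Thus the entire content of the theorem is the assertion that every entry $(t,j)$ appearing in Table~\ref{tj-values for m congruent to 0 mod 4} (respectively Table~\ref{tj-values for m congruent to 1 mod 4}) satisfies the corresponding omission condition together with $1<j<t$. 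Since $Q_0(x,y)\bmod t$ and $Q_1(x,y)\bmod t$ depend only on $x$ and $y$ modulo $t$, each such condition is a finite check.

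Concretely, I would proceed as follows. For each modulus $t$ from $2$ to $1000$, first compute the value sets $S_0(t)=\{Q_0(x,y)\bmod t\}$ and $S_1(t)=\{Q_1(x,y)\bmod t\}$ by letting $x$ and $y$ range over a complete residue system modulo $t$. Then form the complementary sets of omitted residues $R_i(t)=(\mathbb Z/t\mathbb Z)\setminus S_i(t)$. An entry $(t,j)$ belongs in the $i$-th table precisely when $2j\bmod t$ lies in $R_i(t)$; equivalently, $j$ is obtained by \emph{halving} an omitted residue $r\in R_i(t)$, exactly as in the two worked $t=25$ examples preceding the theorem. When $t$ is odd, $2$ is invertible modulo $t$ and each omitted $r$ yields a unique $j$ in the range $1<j<t$ (after discarding any $r$ whose halved value is $0$ or $1$); when $t$ is even, only even omitted residues can be halved, and each gives up to two admissible $j$. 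Carrying out this computation for all $t\le 1000$ reproduces exactly the tabulated entries, and verifying that each listed pair satisfies its omission condition is then immediate from the stored value sets.

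The one point requiring care beyond the raw computation is the selection of \emph{primitive} moduli, which is what keeps the tables finite and non-redundant. As observed just before the theorem, if $T$ is a multiple of $t$ then the family attached to $T$ is a subset of the one attached to $t$. The reason is that the value set of $Q_i$ modulo $t$ is the image of its value set modulo $T$ under the reduction map $\pi\colon\mathbb Z/T\mathbb Z\to\mathbb Z/t\mathbb Z$; hence if $2j\bmod t\notin S_i(t)$, every lift of $2j$ modulo $T$ avoids $S_i(T)$, and the progression $m\equiv 4j\pmod{4t}$ is the disjoint union of the finer progressions modulo $4T$. I would therefore retain only those $t$ that are not multiples of a strictly smaller modulus already yielding the same family, a bookkeeping step applied after the value-set computation.

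I expect the main obstacle to be not conceptual but one of scale and careful bookkeeping: the verification is a finite but genuinely machine-assisted enumeration over all $t\le 1000$ and all residues, and the delicate parts are handling the even-$t$ halving (where $2j\bmod t$ is constrained to even residues and $2$ is not invertible) and correctly implementing the primitivity filter so that no admissible family is omitted and none is listed redundantly. Once the value sets $S_i(t)$ are computed correctly, every individual claim $\gamma_4(K)=2$ follows from Proposition~\ref{families of twist knots with gamma4 equal to 2} with no further knot-theoretic input.
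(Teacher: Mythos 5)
Your proposal is correct and is essentially the paper's own (largely implicit) argument: the theorem is deduced from Proposition~\ref{families of twist knots with gamma4 equal to 2} by a finite, machine-assisted check that each tabulated pair $(t,j)$ satisfies the relevant quadratic-form omission condition modulo $t$, with the $j$ values obtained by halving the omitted residues exactly as in the paper's $t=25$ examples. Your primitivity bookkeeping mirrors the paper's remarks preceding the theorem and concerns only non-redundancy of the tables, not the correctness of the individual claims $\gamma_4(K)=2$.
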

\begin{table}[htp]
\begin{center}
\begin{tabular}{|c||l|}
\hline
$t$ & $j$\\
\hline
25 & 8, 13, 18, 23 \\
\hline
49 & 13, 20, 27, 34, 41, 48 \\
\hline
169 & 8, 34, 47, 60, 73, 86, 99, 112, 125, 138, 151, 164\\
\hline
529 & 20, 43, 89, 
   112, 135, 158, 181, 204, 227, 250, 273, 296, 319,  342, 365,\\ 
   & 388, 
   411, 434, 457, 480, 503, 526\\
\hline
841 & 18, 47, 76, 134, 163, 192, 
   221, 250, 279, 308, 337, 366, 395, 424, 453, \\
   & 482, 511, 540, 569, 
   598, 627, 656, 685, 714, 743, 772, 801, 830\\
   \hline
961 & 27, 58, 89, 
   151, 182, 213, 244, 275, 306, 337, 368, 399, 430, 461, 492,\\
   &
   523, 554, 585, 616, 647, 678, 709, 740, 771, 802, 833, 864, 895, 926, 
   957 \\
\hline

\end{tabular}
\end{center}
\caption{Values of $t$ and $j$ that imply $\gamma_4(C(4j+4tk, 2))=2$ for all $k\ge 0$.}
\label{tj-values for m congruent to 0 mod 4}
\end{table}
   
\begin{table}[htp]
\begin{center} 
\begin{tabular}{|c||l|}
\hline
$t$ & $j$\\
\hline
 25 & 4, 14, 19, 24 \\
 \hline
 49 & 4, 11, 25, 32, 39, 46 \\
 \hline
 169 & 11,24, 37, 50, 76, 89, 102, 115, 128, 141, 154, 167 \\
 \hline
 529 &
   14, 37, 60, 83, 106, 129, 152, 175, 221, 244, 267, 290, 313, 336, 359,\\
   & 382, 405, 428, 451, 474, 497, 520 \\
   \hline
 841 &
   25,54, 83, 112, 141, 170, 199, 228, 257, 286, 344, 373, 402, 431, 460,\\
   & 489, 518, 547, 576, 605, 634, 663, 692, 721, 750, 779, 808, 837 \\
   \hline
 961 &
   19, 50, 81, 112, 143, 174, 205, 236, 267, 298, 329, 391, 422, 453, 484,\\
   & 515, 546, 577, 608, 639, 670, 701, 732, 763, 794, 825, 856, 887, 918, 949\\
   \hline
\end{tabular}
\end{center}
\caption{Values of $t$ and $j$ that imply $\gamma_4(C(1+4j+4tk, 2))=2$ for all $k\ge 0$.}
\label{tj-values for m congruent to 1 mod 4}
\end{table}

\pagebreak


\bibliographystyle{plain}
\bibliography{myReferences}

\begin{thebibliography}{10}

\bibitem{allen2020}
Samantha Allen.
\newblock Nonorientable surfaces bounded by knots: a geography problem.
\newblock \url{https://arxiv.org/abs/2007.14332}, 2020.

\bibitem{Batson}
Joshua Batson.
\newblock Nonorientable slice genus can be arbitrarily large.
\newblock {\em Math. Res. Lett.}, 21(3):423--436, 2014.

\bibitem{binns2021}
Fraser Binns, Sungkyung Kang, Jonathan Simone, and Paula Tru{\"o}l.
\newblock On the nonorientable four-ball genus of torus knots, 2021.

\bibitem{BZ:2003}
Gerhard Burde and Heiner Zieschang.
\newblock {\em Knots}, volume~5 of {\em de Gruyter Studies in Mathematics}.
\newblock Walter de Gruyter \& Co., Berlin, 2003.

\bibitem{CassonGordon}
A.~J. Casson and C.~McA. Gordon.
\newblock Cobordism of classical knots.
\newblock In {\em \`A la recherche de la topologie perdue}, volume~62 of {\em
  Progr. Math.}, pages 181--199. Birkh\"{a}user Boston, Boston, MA, 1986.
\newblock With an appendix by P. M. Gilmer.

\bibitem{Clark_1978}
Bradd~Evans Clark.
\newblock Crosscaps and knots.
\newblock {\em Internat. J. Math. Math. Sci.}, 1(1):113--123, 1978.

\bibitem{Cromwell2004}
Peter~R. Cromwell.
\newblock {\em Knots and links}.
\newblock Cambridge University Press, Cambridge, 2004.

\bibitem{Donaldson2}
S.~K. Donaldson.
\newblock The orientation of {Y}ang-{M}ills moduli spaces and {$4$}-manifold
  topology.
\newblock {\em J. Differential Geom.}, 26(3):397--428, 1987.

\bibitem{feller2021}
Peter Feller and Marco Golla.
\newblock Non-orientable slice surfaces and inscribed rectangles.
\newblock \url{https://arxiv.org/abs/2003.01590}, 2020.

\bibitem{GJ:2015}
Michael Gallaspy and Stanislav Jabuka.
\newblock The {G}oeritz matrix and signature of a two bridge knot.
\newblock {\em Rocky Mountain J. Math.}, 45(4):1119--1148, 2015.

\bibitem{Ghanbarian2020}
Nakisa Ghanbarian.
\newblock The non-orientable 4-genus for knots with 10 crossings.
\newblock {\em J. Knot Theory Ramifications}, 31(5):Paper No. 2250034, 46,
  2022.

\bibitem{G}
Patrick~M. Gilmer.
\newblock On the slice genus of knots.
\newblock {\em Invent. Math.}, 66(2):191--197, 1982.

\bibitem{GL}
Patrick~M. Gilmer and Charles Livingston.
\newblock The nonorientable 4-genus of knots.
\newblock {\em J. Lond. Math. Soc. (2)}, 84(3):559--577, 2011.

\bibitem{Gordon-Litherland}
C.~McA. Gordon and R.~A. Litherland.
\newblock On the signature of a link.
\newblock {\em Invent. Math.}, 47(1):53--69, 1978.

\bibitem{Hirasawa_Teragaito:2006}
Mikami Hirasawa and Masakazu Teragaito.
\newblock Crosscap numbers of 2-bridge knots.
\newblock {\em Topology}, 45(3):513--530, 2006.

\bibitem{JK}
Stanislav Jabuka and Tynan Kelly.
\newblock The nonorientable 4-genus for knots with 8 or 9 crossings.
\newblock {\em Algebr. Geom. Topol.}, 18(3):1823--1856, 2018.

\bibitem{jabuka2020}
Stanislav Jabuka and Cornelia~A. Van~Cott.
\newblock Comparing nonorientable three genus and nonorientable four genus of
  torus knots.
\newblock {\em J. Knot Theory Ramifications}, 29(3):2050013, 15, 2020.

\bibitem{jabuka2019}
Stanislav Jabuka and Cornelia~A. Van~Cott.
\newblock On a nonorientable analogue of the {M}ilnor conjecture.
\newblock {\em Algebr. Geom. Topol.}, 21(5):2571--2625, 2021.

\bibitem{Kearney}
Mary~Katherine Kearney.
\newblock {\em An obstruction to knots bounding {M}obius bands in {B}$^4$}.
\newblock ProQuest LLC, Ann Arbor, MI, 2011.
\newblock Thesis (Ph.D.)--Indiana University.

\bibitem{Lecuona}
Ana~G. Lecuona.
\newblock On the slice-ribbon conjecture for {M}ontesinos knots.
\newblock {\em Trans. Amer. Math. Soc.}, 364(1):233--285, 2012.

\bibitem{Lisca}
Paolo Lisca.
\newblock Lens spaces, rational balls and the ribbon conjecture.
\newblock {\em Geom. Topol.}, 11:429--472, 2007.

\bibitem{knotinfo}
Charles Livingston and Allison~H. Moore.
\newblock Knotinfo: Table of knot invariants.
\newblock URL: \url{knotinfo.math.indiana.edu}, July 2020.

\bibitem{Lobb}
Andrew Lobb.
\newblock A counterexample to {B}atson's conjecture.
\newblock {\em Math. Res. Lett.}, 26(6):1789, 2019.

\bibitem{Longo2020}
Vincent Longo.
\newblock An infinite family of counterexamples to {B}atson's conjecture.
\newblock \url{https://arxiv.org/abs/2011.00122}, 2020.

\bibitem{Murakami_Yasuhara}
Hitoshi Murakami and Akira Yasuhara.
\newblock Four-genus and four-dimensional clasp number of a knot.
\newblock {\em Proc. Amer. Math. Soc.}, 128(12):3693--3699, 2000.

\bibitem{Madeti}
V.~Siwach and P.~Madeti.
\newblock Region unknotting number of 2-bridge knots.
\newblock {\em J. Knot Theory Ramifications}, 24(11):1550053, 20, 2015.

\bibitem{Viro}
O.~Ja. Viro.
\newblock Positioning in codimension {$2$}, and the boundary.
\newblock {\em Uspehi Mat. Nauk}, 30(1(181)):231--232, 1975.

\bibitem{Yasuhara}
Akira Yasuhara.
\newblock Connecting lemmas and representing homology classes of simply
  connected {$4$}-manifolds.
\newblock {\em Tokyo J. Math.}, 19(1):245--261, 1996.

\end{thebibliography}

\begin{table}[htp]
\begin{center}
   \includegraphics[width=5.65in]{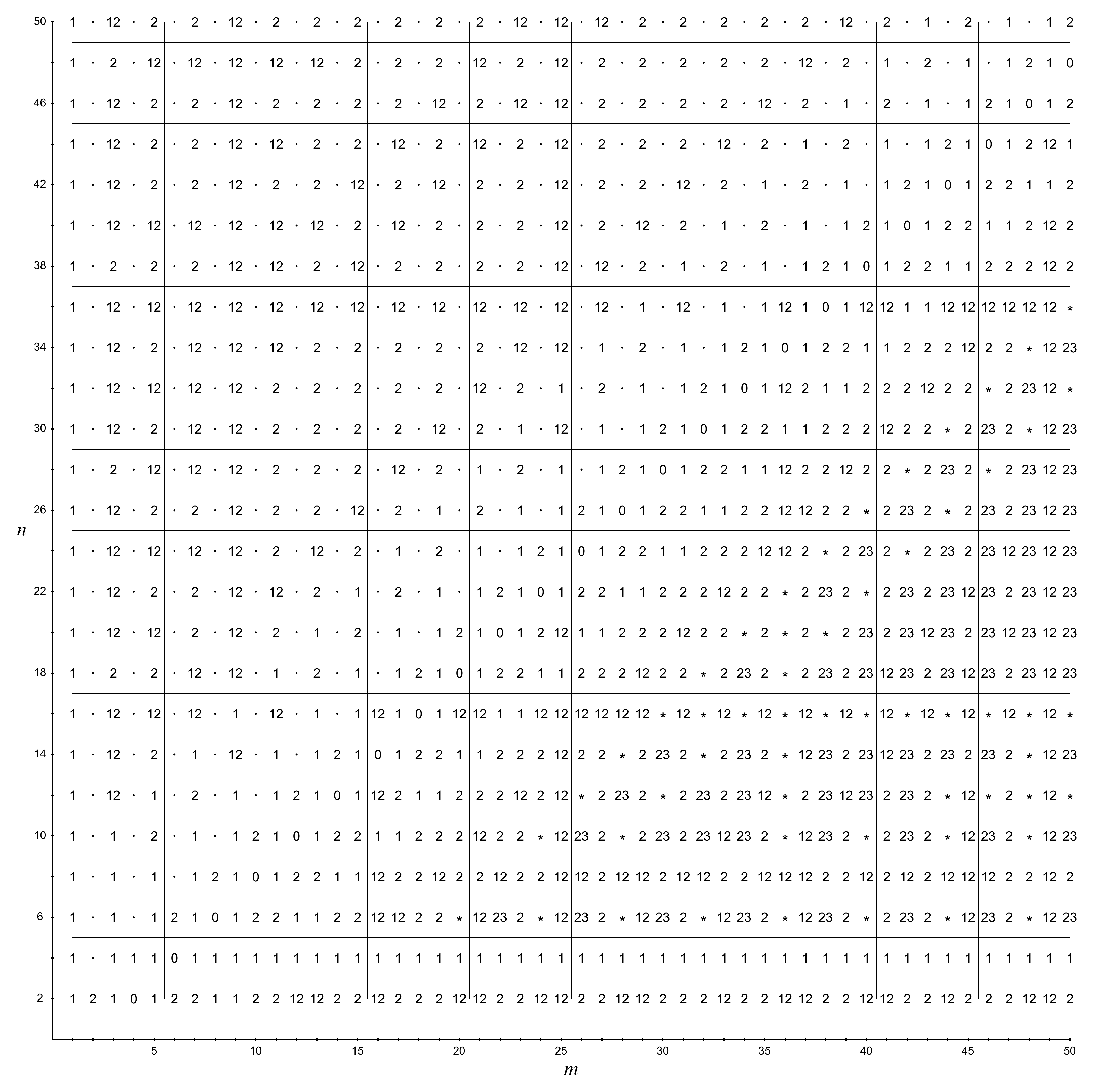}
\end{center}
\caption{The value, or range of values, of $\gamma_4(C(m,n))$ for $m>0$, $n>0$ and even. The symbol 12 denotes $\gamma_4 \in \{1,2\}$, the symbol 23 denotes $\gamma_4 \in \{2,3\}$, and the symbol *  denotes $\gamma_4 \in \{1,2,3\}$. A dot indicates a duplication.}
\label{table1}
\end{table}%

\pagebreak

\begin{table}[htp]

\begin{center}
   \includegraphics[width=5.65in]{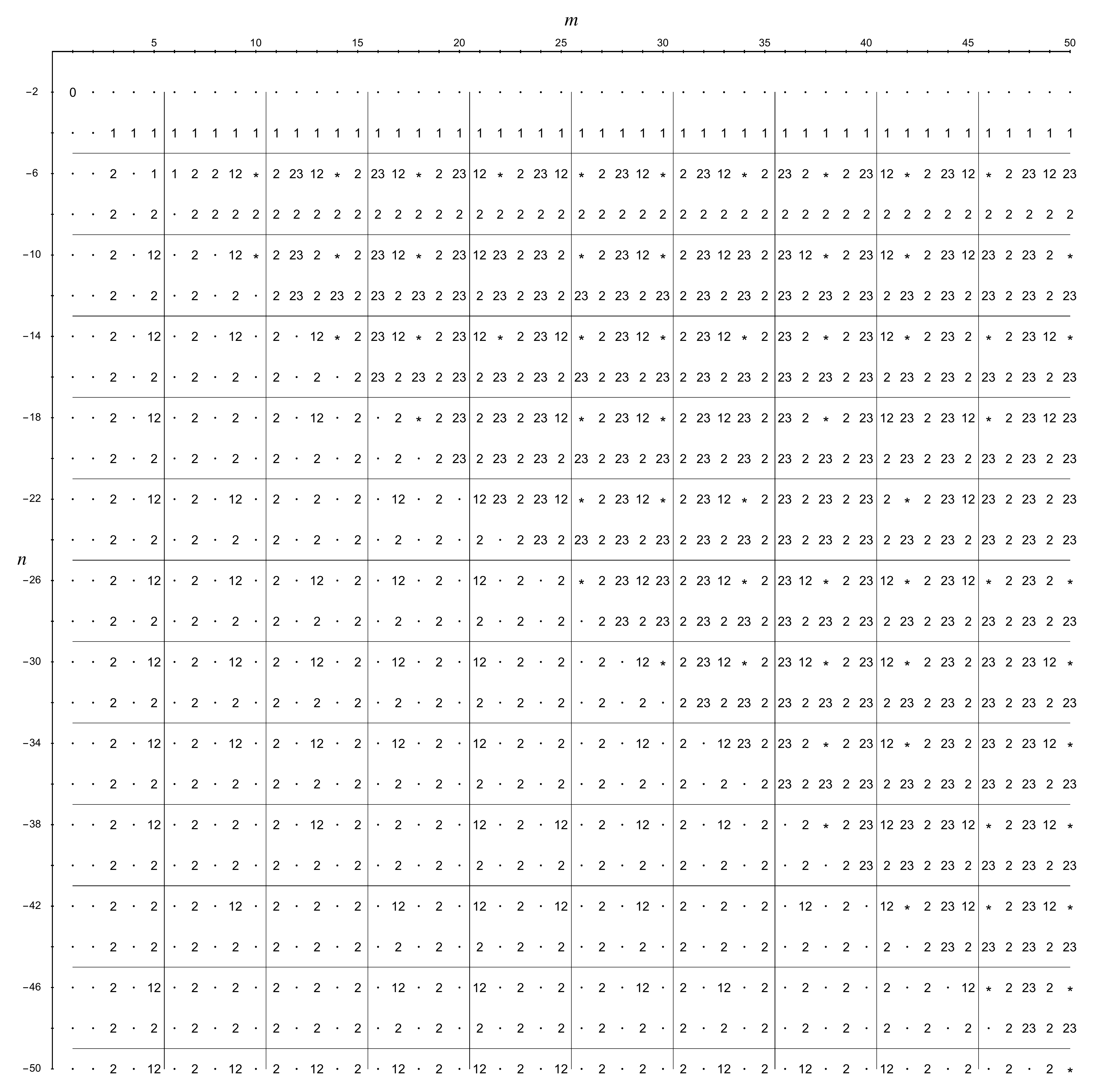}
\end{center}

\caption{The value, or range of values, of $\gamma_4(C(m,n))$ for $m>0$, $n<0$ and even. All symbols are as in Table~\ref{table1}.}
\label{table2}
\end{table}%

\end{document}